\newcommand\footnotetext@\relax
\let\footnotetext@\@footnotetext
\renewcommand{\thanks}[1]{%
  \unskip\protected@xdef\@thefnmark{}%
  \protected@xdef\@thanks{\@thanks\protect\footnotetext@{#1}}}
\newcommand{\MSC}[2][2010]{%
  \unskip\protected@xdef\@thefnmark{}%
  \protect\footnotetext@{\kern-1.8em{\slshape #1 MSC:}\/ #2.}}
\newcommand{\keywords}[1]{%
  \unskip\protected@xdef\@thefnmark{}%
  \protect\footnotetext@{\kern-1.8em{\slshape Keywords:}\/ #1.}}
\newcommand{\address}[1]{\\[1.67ex]%
  \parbox{.96\textwidth}{\centering\footnotesize\slshape #1}}
\gdef\@date{}
\renewcommand{\bydef}[1][=]{\overset{\mathrm{def}}#1}
\newcommand{\tto}{\rightrightarrows}
\renewcommand{\GL}{\operatorname{GL}}
\renewcommand{\SO}{\operatorname{SO}}
\newcommand{\void}{\bgroup\phantom e\egroup}
\newcommand\m@themphpunct[1]{\mathchoice{%
  \mathord{#1}\mkern\d@emphskip}%
 {\mathord{#1}\allowbreak\mskip\t@emphskip}{#1}{#1}}
\newcommand\@emphpunct[2][3000]{%
  \ifmmode
    \m@themphpunct{#2}%
  \else
    #2\spacefactor#1{}%
  \fi}
\renewcommand{\emphpunct}{%
  \d@emphskip=\thickemphmuskip
  \t@emphskip=\thickmuskip
  \@ifstar
   {\d@emphskip=\medemphmuskip
    \t@emphskip=\medmuskip
    \@emphpunct}\@emphpunct}
\newcommand\sidetext@[3]{%
  \mskip\thickmuskip
  \mathopen{\@amstext{\upshape#1}}%
  \ifinner
    \def\text{\quitm@th}#2%
    \def\text{\@amstext}%
  \else #2\fi
  \mathclose{\@amstext{\upshape#3}}}
\def\sidetext@p(#1){\sidetext@({#1})}
\def\sidetext@b[#1]{\sidetext@[{#1}]}
\newcommand{\subsetto}{\overset{\textstyle\subset\mkern5mu}{\sm@shrelto.5ex\to}}
\theoremstyle{definition}
  \newtheorem{exmp}[stmt]{Example}
  \newtheorem{prob}[stmt]{Problem}
\newcommand{\Mfd}{\mathit{Mfd}}
\newcommand{\Set}{\mathit{Set}}
\newcommand{\op}[1]{#1^\mathit{op}}
\DeclareMathOperator{\Conn}{\mathit{Conn}}
\DeclareMathOperator{\Econ}{\mathit{Econ}}
\DeclareMathOperator{\Mcon}{\mathit{Mcon}}
\DeclareMathOperator{\Psr}{\mathit{Psr}}
\DeclareMathOperator{\Rep}{\mathit{Rep}}
\newcommand{\nef}[1]{\accentset{\ast}{#1}}
\newcommand{\idc}[1]{\accentset{\circ}{#1}}
\newcommand{\free}[1]{#1^\mathrm{free}}
\newcommand{\res}{\mathit{res}}
\DeclareMathOperator{\Func}{Func}
\DeclareSymbolFont{wasyfonts}{U}{wasy}{m}{n}
\re@DeclareMathSymbol{\intop}{\mathop}{wasyfonts}{"72}
\re@DeclareMathSymbol{\iintop}{\mathop}{wasyfonts}{"73}
\renewcommand{\int}{\intop}
\newcommand\avg@power\relax
\newcommand\@raiseto@power{%
  \ifx\avg@power\@empty\else^{(\avg@power)}\fi}
\newcommand\@avg[1]{\hat{#1}\@raiseto@power}
\newcommand\@wideavg[1]{\widehat{#1}\@raiseto@power}
\newcommand\p@avg\relax
\def\p@avg(#1){%
  \ifinner
    (#1)^{\boldsymbol\wedge}%
  \else
    \@wideavg{(#1)}%
  \fi}
\newcommand{\avg}[1][]{%
  \def\avg@power{#1}%
  \@ifnextchar\bgroup\@wideavg
 {\@ifnextchar(\p@avg\@avg}}
\title{A fast convergence theorem %
    for nearly multiplicative connections %
    on proper Lie groupoids%
  \MSC{Primary 58H05; %
    Secondary 53C05, 58C30}
  \keywords{Lie groupoid, proper groupoid, %
    multiplicative connection, pseudo-rep\-re\-sen\-ta\-tion, %
    normalized Haar system, iterative averaging, $C^k$\mdash topology}
}%
\author{Giorgio Trentinaglia%
  \address{Center for Mathematical Analysis, Geometry and Dynamical Systems, %
    Instituto Superior T\'ecnico, Universidade de Lisboa, %
    Av.~Rovisco Pais, 1049-001 Lisboa, Portugal}
  \thanks{Part of the material contained in this article was elaborated %
    while the author was a guest of the Max Planck Institute for Mathematics %
    in Bonn, Germany. The author acknowledges support %
    from the Portuguese Foundation for Science and Technology %
    (Fun\-da\-\c c\~ao pa\-ra a Ci\-\^en\-cia e a Tec\-no\-lo\-gia) %
    through the grants %
    SFRH/\allowbreak BPD/\allowbreak 81810/\allowbreak 2011 and %
    UID/\allowbreak MAT/\allowbreak 04459/\allowbreak 2013.}
}%
\begin{document}
\maketitle

\begin{abstract} Motivated by the study of a certain family of classical geometric problems we investigate the existence of multiplicative connections on proper Lie groupoids. We show that one can always deform a given connection which is only approximately multiplicative into a genuinely multiplicative connection. The proof of this fact that we present here relies on a recursive averaging technique. As an application we point out that the study of multiplicative connections on general proper Lie groupoids reduces to the study of longitudinal representations of regular groupoids. We regard our results as a preliminary step towards the elaboration of an obstruction theory for multiplicative connections. \end{abstract}

\tableofcontents

\section*{Introduction}

This paper is supposed to be the first of a series devoted to the study of multiplicative connections on Lie groupoids, also known as \emph{Cartan connections} in the literature. Examples of \emph{Cartan groupoids} i.e.~of Lie groupoids endowed with multiplicative connections abund in nature. Quite often in situations when a notion of ``finite order symmetry'' can be associated with a certain kind of geometric structures naturally, it is also possible to associate a Cartan groupoid with each structure of that kind\textemdash just as naturally. In many cases, the Cartan groupoids arising in this way happen to be proper, or even compact. The list of classical geometric structures which give rise to proper Cartan groupoids includes e.g.~``absolute parallelisms'' on manifolds (i.e.~tangent bundle trivializations), Riemannian metrics, $G$\mdash structures endowed with compatible connections \cite{Chern,Koba,Stern} for compact $G$, and more generally, ``Cartan geometries'' i.e.~Cartan forms on principal $G$\mdash bundles \cite{Bla16,Ehr50,Sharpe}, again for compact $G$. All these structures which we have just mentioned give rise to {\em transitive}\/ Cartan groupoids, but there are also many {\em intransitive}\/ examples, among which we highlight the Cartan groupoids associated with general Lie pseudo-groups of finite type \cite{Bl16a,Ehr58}.

The point of view that Cartan connections (and other closely related multiplicative structures on Lie groupoids and Lie algebroids) provide a convenient setting for formulating and studying a variety of problems in geometry has inspired a flare of research in the course of the last decade or so; we mention the papers of Blaom \cite{Bl06,Bl12,Bl13,Bla16,Bl16a}, Crainic, Salazar and Struchiner \cite{CSS}, Jotz and Ortiz \cite{JO}, Salazar's thesis \cite{Salaz}, Yudilevich's \cite{Yudil}, and also the recent book \cite{CrampS}. All these works either promote the aforesaid point of view through the study of old and new examples and the reinterpretation of classical problems, methods and results in the new language, and/or focus on the correspondence between Cartan connections on Lie groupoids and their ``infinitesimal counterparts'' on Lie algebroids. The intuitive idea behind the latter glob\-al-to-lo\-cal correspondence can easily be described. Given an arbitrary source connected Lie groupoid and an arbitrary open cover of its base, any multiplicative connection will uniquely be determined by its restrictions over the open sets of the cover, because any given arrow can (by source connectedness) be broken into some path of composable arrows each one having its end-points lying both within the same set of the cover\textemdash so that the value of the connection on the given arrow can be recovered by ``multiplying'' its values on the individual arrows of some such path. Conversely, providing that our groupoid is source simply connected, for any collection of local multiplicative connections subordinated to some open cover of the groupoid's base which agree on the overlaps of their domains of definition there will be a unique multiplicative connection whose restrictions over the open sets of the cover coincide with the given connections, because the results of ``multiplying'' local values along different paths that compose up to the same arrows will coincide. In the limit when you make the diameters of the open sets shrink to zero, any such ``net'' of local multiplicative connections will approximately look like the Lie algebroid version of a Cartan connection.

The perspective of the present paper is somewhat different. On the one hand, we are interested in a problem whose nature is essentially global. We want to be able to deal with Lie groupoids which possibly have non simply connected, or even disconnected, source fibers. On the other hand, the tools that we use narrow the scope of our theory down to the proper case; in fact, the non-prop\-er case exhibits pathologies (see below) which make it unrealistic to hope for systematic results similar to those that we are going to present in this paper. The problem that we want to consider is a far-reach\-ing generalization of the classical problem of finding the obstructions to the existence of $G$\mdash structures on manifolds \cite{Ehr50,Koba,Steen}. Specifically, we are interested in the following sort of questions:
\begin{enumerate}
\itemsep=0pt%
\def\labelenumi{(\alph{enumi})}%
 \item When does a Lie groupoid admit Cartan connections? Better, given any Lie groupoid, decide whether it admits Cartan connections or not.
 \item When can two Cartan connections on a Lie groupoid be deformed into each other through Cartan connections?
 \item Same questions, but for \emph{flat} i.e.~Frobenius integrable (as vector distributions) Cartan connections.
\end{enumerate}
The objects mentioned in (c) are sometimes called ``pseudo-ac\-tions'' \cite{Bl16a,Tang}. The above questions underlie a number of seemingly unrelated problems in geometry. Suppose we know that certain geometric structures canonically give rise to Cartan groupoids. Then, the obstructions to the existence of Cartan connections automatically translate into obstructions to the existence of those structures. Similarly, the obstructions to deforming Cartan connections into one another translate into information about the classification of the same structures up to homotopy. Another example of a problem where the questions (a)\textendash (c) are relevant is that of determining whether a singular foliation arises as the orbit foliation of some Lie group action; this is simply one instance of a more general type of problem\textemdash that of finding the global symmetries of an abstract geometric object.

The present paper will exclusively be concerned with the first two questions, (a) and (b); answering (c) would seem to require some kind of multiplicative analogue of Haefliger\textendash Thurston's theory for foliations \cite{Haefli,Thur74,Thur76}. Of course, understanding the first two questions is propaedeutic to answering the third one. We observe that in this respect, (b) appears to be particularly relevant.

Both (a) and (b) can be viewed as special cases of a more general extension problem (Problem~\ref{prob:16a.1.9}). Our main result (Theorem~\ref{prop:14A.5.2}) implies that in the proper case, the study of this extension problem reduces to the study of its regular subcase. Now for proper regular groupoids the problem can be so rephrased that it takes the form of a standard problem in equivariant (orbifold) obstruction theory\textemdash whose methods and techniques may then be put to use. Our main result is essentially a direct application of a fast convergence theorem for nearly multiplicative connections (Theorem~\ref{thm:12B.15.1}), which, we think, is of interest in its own right. The reader is referred to \S\ref{sec:16a.1}\textemdash which can be regarded as a continuation of this introduction\textemdash for a detailed description of all these ideas and results.

\subsection*{General conventions}

With few exceptions, in what follows all the manifolds will be ($C^\infty$\babelhyphen{nobreak})dif\-fer\-en\-tia\-ble, non-emp\-ty, of constant dimension, separated, and will possess a countable basis of open sets. We shall call every differentiable manifold with these properties a \emph{smooth} manifold. All the maps between smooth manifolds, as well as all the vector bundles, will be \emph{smooth}, i.e., ($C^\infty$\babelhyphen{nobreak})dif\-fer\-en\-tia\-ble. For any vector bundle $E$ over a manifold $X$, we shall let $\Gamma^\infty(X;E)$ denote the (in\-fi\-nite-di\-men\-sion\-al) vector space of differentiable cross-sec\-tions of $E$. We shall regard $\Gamma^\infty(X;E)$ as a Fr\'echet space, the topology being that of uniform convergence on compact sets up to all orders of derivation (this being also known as the \emph{\(C^\infty\)\mdash topology}).

In the course of the paper we shall meet operators which are ``differentiable'' in the intuitive sense that they are defined through differentiable expressions, but which operate between spaces whose differentiable structures, typically in\-fi\-nite-di\-men\-sion\-al, are much less intuitively defined. In order to rigorously keep track of the differentiability properties of these operators, but at the same time avoid unnecessary technical complications, we shall adopt the following standard conceptual shortcut. Let $\Mfd$ denote the category of smooth manifolds. For any given ``space'' $\mathscr X$, let $\Set(-,\mathscr X):\op\Mfd\to\Set$ denote the presheaf which to every smooth manifold $S$ assigns the set $\Set(S,\mathscr X)$ of maps from $S$ to $\mathscr X$. By a \emph{\(C^\infty\)\mdash structure} on $\mathscr X$, we mean a subsheaf $\mathcal S$ of $\Set(-,\mathscr X)$ that contains all the locally constant maps (the sheaf property being with respect to arbitrary open covers). We refer to $\mathscr X=(\mathscr X,\mathcal S)$ as a \emph{$C^\infty$\mdash space}. A \emph{$C^\infty$ mapping} between two $C^\infty$\mdash spaces will be one which by forward composition gives rise to a natural transformation between the corresponding $C^\infty$\mdash structures. For any smooth vector bundle $E\to X$, there is on the space $\Gamma^\infty(X;E)$ a canonical $C^\infty$\mdash structure consisting of those maps $S\to\Gamma^\infty(X;E)$ which give rise to smooth maps $S\times X\to E$. Most of the $C^\infty$\mdash spaces that we shall encounter will be $C^\infty$\mdash spaces of this kind, or subspaces thereof.

We shall assume familiarity with the theory of Lie groupoids at the level say of Chapters~5\textendash 6 of the textbook \cite{MM}. The notations which we shall be using will be fairly standard, with the only possible exception that we shall be writing $\varGamma^x$ resp.~$\varGamma_x$ for the source fiber $s^{-1}(x)$ resp.~target fiber $t^{-1}(x)$ of a groupoid $\varGamma$ at a base point $x$, and $\varGamma^x_y$ for the set $\varGamma^x\cap\varGamma_y=\varGamma(x,y)$ of arrows with source $x$ and target $y$, in particular $\varGamma^x_x$ for the isotropy group at $x$. Another minor departure from \cite{MM} concerns the terminology. We shall be using the term `Lie groupoid' in a more restrictive sense, namely as a synonym to `smooth groupoid', meaning that the manifold of objects and the manifold of arrows are both smooth. (This usage seems to be closer to the standard practice of Lie group theory.) On the few occasions when we actually meet examples which do not fit this more restrictive definition, we shall use the term `differentiable groupoid'.


\section{Outline}\label{sec:16a.1}

Let \(\varGamma\tto M\) be a Lie groupoid. Let \(s,t:\varGamma\to M\) denote the groupoid source resp.~target map. Let \(E\) be any vector bundle over \(M\). A \emph{pseudo-rep\-re\-sen\-ta\-tion} of \(\varGamma\tto M\) on \(E\) is a vector bundle morphism from \(s^*E\) to \(t^*E\), in other words, a global cross-sec\-tion of the vector bundle \(L(s^*E,t^*E)\). To every arrow \(g\in\varGamma\) a pseudo-rep\-re\-sen\-ta\-tion \(\lambda:s^*E\to t^*E\) assigns a linear map \(\lambda_g:E_{sg}\to E_{tg}\) between the fibers of \(E\) corresponding to the source and to the target of \(g\). We call \(\lambda\) \emph{invertible} if \(\lambda_g\) is for every \(g\) a linear isomorphism of \(E_{sg}\) onto \(E_{tg}\). We call \(\lambda\) \emph{unital} if \(\lambda_{1_x}=\id_{E_x}\) for all \(x\) in \(M\). We call \(\lambda\) a \emph{representation} if \(\lambda\) is unital and the identity \(\lambda_{g'g}=\lambda_{g'}\lambda_g\) holds for every pair of arrows \(g',g\) which are composable in other words satisfy the condition \(sg'=tg\).

A \emph{connection} on \(\varGamma\tto M\) is a right splitting \(\eta\) of the following short exact sequence of morphisms of vector bundles over the manifold \(\varGamma\):
\begin{equation}
\label{eqn:12B.8.1}
\xymatrix@=1.67em{%
 0 \ar[r]
 &	\ker{\d s}
	\ar[r]
	&	T\varGamma
		\ar[r]^-{\d s}
		&	s^*TM \ar@/^1pc/[l]_\eta
			\ar[r]
			&	0\text.
}\end{equation}
We identify \(\eta\) with the subbundle \(H:=\im\eta\) of \(T\varGamma\), and refer to \(\eta=(\d s\mathbin|H)^{-1}\) as the \emph{horizontal lift} associated with \(H\), also written \(\eta^H\). Letting \(1:M\to\varGamma\) denote the groupoid unit map \(x\mapsto 1_x\), we call \(\eta\) \emph{unital} when the condition \(\eta_{1x}=T_x1\) is satisfied for all \(x\) in \(M\). Lie groupoids always admit unital connections. One can compose the horizontal lift \(\eta^H:s^*TM\to T\varGamma\) with the vec\-tor-bun\-dle morphism \(\d t:T\varGamma\to t^*TM\) so as to obtain a pseudo-rep\-re\-sen\-ta\-tion \(\lambda^H\) of \(\varGamma\tto M\) on \(TM\); we call \(\lambda^H\) the \emph{effect} of \(H\). By an \emph{effective} connection we mean one whose effect is a representation.

The \emph{tangent groupoid} of \(\varGamma\tto M\) is the Lie groupoid \(T\varGamma\tto TM\) whose structure maps are obtained by differentiating those of \(\varGamma\tto M\). A connection \(H\) on \(\varGamma\tto M\) is said to be \emph{multiplicative} if \(H\subset T\varGamma\) constitutes a subgroupoid of \(T\varGamma\tto TM\) (of necessity over the whole of \(TM\)). Trivially, multiplicative connections are unital. A unital connection \(H\) on \(\varGamma\tto M\) is multiplicative if and only if the identity below holds for every composable pair of arrows \(g',g\) for all tangent vectors \(\varv\in T_{sg}M\).%
\begin{subequations}
\label{eqn:16a.2}
\begin{equation}
\label{eqn:16a.2a}
	\eta^H_{g'g}\varv=(\eta^H_{g'}\lambda^H_g\varv)\eta^H_g\varv
\end{equation}
Multiplicative connections are always effective, as one can see by applying the linear map \(T_gt\) to both sides of this identity.

Let \(\varGamma_\div\) denote the submanifold of \(\varGamma\times\varGamma\) formed by the pairs of arrows $(g,h)$ such that \(sg=sh\). We shall call any such pair a \emph{divisible} pair. Let \(q_\div\) stand for the \emph{division map} \(\varGamma_\div\to\varGamma\emphpunct,(g,h)\mapsto gh^{-1}\). For any divisible pair of arrows \(\varw_1,\varw_2\) in the tangent groupoid \(T\varGamma\tto TM\) let \(\varw_1\div\varw_2\) denote the ``ratio'' \(\varw_1{\varw_2}^{-1}=(Tq_\div)(\varw_1,\varw_2)\). Then for any unital connection \(H\) on \(\varGamma\tto M\) the above multiplicativity equations \eqref{eqn:16a.2a} take the following, alternative form, where \(g,h\) is any divisible pair of arrows in \(\varGamma\tto M\) and \(\varv\in T_{sg=sh}M\) is any tangent vector:
\begin{equation}
\label{eqn:12B.11.4}
	\eta^H_{gh^{-1}}\lambda^H_h\varv=\eta^H_g\varv\div\eta^H_h\varv\text.
\end{equation}
\end{subequations}

We shall let \(\Conn(\varGamma)\) denote the space of connections on \(\varGamma\tto M\). This is the affine subspace of \(\Gamma^\infty\bigl(\varGamma;L(s^*TM,T\varGamma)\bigr)\) formed by those differentiable cross-sec\-tions \(\eta\) of the vector bundle \(L(s^*TM,T\varGamma)\) which are solutions for the equation \(\d s\circ\eta=\id_{s^*TM}\). We shall view \(\Conn(\varGamma)\) as a \(C^\infty\)\mdash space throughout. We shall moreover let \(\Conn_1(\varGamma)\), \(\Econ(\varGamma)\), and \(\Mcon(\varGamma)\) denote the subspaces of \(\Conn(\varGamma)\) respectively formed by the unital, effective, and multiplicative connections.

\subsection*{Averaging}

Call \emph{non-de\-gen\-er\-ate} a connection \(H\) on \(\varGamma\tto M\) whose effect \(\lambda^H\) is an invertible pseudo-rep\-re\-sen\-ta\-tion. Let \(\Conn_\div(\varGamma)\) denote the subspace of \(\Conn(\varGamma)\) formed by the non-de\-gen\-er\-ate connections.

\begin{defn}\label{defn:12B.11.1} For any non-de\-gen\-er\-ate connection \(H\) on \(\varGamma\tto M\), and for any divisible pair \((g,h)\in\varGamma_\div\), put%
\begin{subequations}
\label{eqn:12B.11.5}
\begin{equation}
\label{eqn:12B.11.5a}
	\delta^H(g,h)\bydef(\eta^H_g\div\eta^H_h)\circ(\lambda^H_h)^{-1}\in L(T_{th}M,T_{gh^{-1}}\varGamma)\text.
\end{equation}
Let \(s_\div\) stand for the map of \(\varGamma_\div\) into \(M\) given by \((g,h)\mapsto th\). The global cross-sec\-tion
\begin{equation}
\label{eqn:12B.11.5b}
	\delta^H\in\Gamma^\infty\bigl(\varGamma_\div;L(s_\div^*TM,q_\div^*T\varGamma)\bigr)
\end{equation}
\end{subequations}
shall be referred to as the ``division cocycle'' associated with \(H\). \end{defn}

From now on, let us assume that \(\varGamma\tto M\) is {\em proper}\/ and hence can be endowed with a \emph{normalized Haar system}; recall that any such system \(\nu=\{\nu_x\}\) assigns each base point \(x\in M\) a positive Radon measure \(\nu_x\) on the target fiber \(\varGamma_x=t^{-1}(x)\) in such a way that the following three conditions are satisfied:%
\begin{subequations}
\label{eqn:12B.10.3}
\begin{enumerate}
\def\labelenumi{(\Alph{enumi})}%
\itemsep=0pt%
 \item For some differentiably varying family \(\tau=\{\tau_x\}\) of volume densities on the target fibers and for some differentiable non-neg\-ative function \(c\) on \(M\) with the property that for each compact subset \(K\) of \(M\) the intersection \(\supp c\cap\varGamma K\) is compact, one has \(\d\nu_x=(c\circ s_x)\d\mu_x\) for all \(x\), where \(\mu_x\) denotes the positive Radon measure on \(\varGamma_x\) associated with the volume density \(\tau_x\), and \(s_x\) denotes the restriction of the map \(s\) along \(\varGamma_x\).
 \item For every arrow \(g\), and for all Borel subsets \(A\) of the target fiber \(\varGamma_{sg}\),
\begin{equation}
\label{eqn:12B.10.3a}
	\nu_{tg}(gA)=\nu_{sg}(A)\text.
\end{equation}
 \item \(\nu_x(\varGamma_x)=1\) for every \(x\).
\end{enumerate}
The first condition implies that \(C^0(\varGamma_x)\subset L^1(\nu_x)\). The second condition, \emph{left invariance}, can be rephrased by saying that for every continuous function \(\varphi\) on \(\varGamma_{sg}\)
\begin{equation}
\label{eqn:12B.10.3b}
	\integral\varphi(g^{-1}h)d\nu_{tg}(h)=\integral\varphi(h)d\nu_{sg}(h)\text.
\end{equation}
\end{subequations}

Next, let \(f:P\to M\) be a map of some manifold of ``parameters'' into the base of our groupoid. If \(E\) is any vector bundle over \(P\) then, letting \(\pr_P\) denote the projection from the fiber product \(P\ftimes ft\varGamma:=\{(y,h)\in P\times\varGamma:f(y)=th\}\) onto \(P\), every cross-sec\-tion \(\vartheta\) of the vector bundle \(\pr_P^*E\) can be turned into a cross-sec\-tion \(\integral\vartheta d\nu\) of \(E\) by \emph{integration along the target fibers:}%
\begin{subequations}
\label{eqn:12B.10.6}
\begin{equation}
\label{eqn:12B.10.6a}
	P\ni y\mapsto\bigl(\integral*\vartheta d\nu\bigr)(y)\bydef\integral\vartheta(y,h)d\nu_{f(y)}(h)\in E_y
\end{equation}
(the integrand here being a vec\-tor-val\-ued differentiable function on \(\varGamma_{f(y)}\) with values in the fi\-nite-di\-men\-sion\-al vector space \(E_y\)). The resulting ``integration functional''
\begin{equation}
\label{eqn:12B.10.6b}
	\Gamma^\infty(P\ftimes ft\varGamma\emphpunct*;\pr_P^*E)\longto\Gamma^\infty(P;E)\emphpunct, \vartheta\mapsto\integral*\vartheta d\nu
\end{equation}
\end{subequations}
is continuous (as a linear map between Fr\'e\-chet spaces).

Appendix~\ref{sec:16a.B} may be consulted for additional information.

\begin{defn}\label{defn:12B.11.2} Let \(H\) be a non-de\-gen\-er\-ate connection on \(\varGamma\tto M\). For every arrow \(g\in\varGamma\) let \(\avg\eta^H_g\) denote the linear map%
\begin{subequations}
\label{eqn:12B.11.6}
\begin{equation}
\label{eqn:12B.11.6a}
	T_{sg}M\ni\varv\mapsto\avg\eta^H_g\varv\bydef\integral_{tk=sg}\delta^H(gk,k)\varv dk\in T_g\varGamma\text.
\end{equation}
[This expression makes sense because \(\delta^H(gk,k)\) is a linear map of \(T_{tk=sg}M\) into \(T_{gkk^{-1}=g}\varGamma\) for all \(\xfro g\,\xfro k\).] We shall refer to the global cross-sec\-tion
\begin{equation}
\label{eqn:12B.11.6b}
	\avg\eta^H\in\Gamma^\infty\bigl(\varGamma;L(s^*TM,T\varGamma)\bigr)
\end{equation}
\end{subequations}
as the \emph{multiplicative average} of \(H\). (Our definition depends of course on the choice of a normalized Haar system on \(\varGamma\tto M\).) \end{defn}

The multiplicative average \(\avg\eta^H\) of every non-de\-gen\-er\-ate groupoid connection \(H\) on \(\varGamma\) happens to be itself the horizontal lift for a unique groupoid connection \(\avg H\) on \(\varGamma\). This new connection, \(\avg H\), is always unital. We shall call it too the multiplicative average of \(H\). We have the following integral formula for \(\avg\lambda^H_g:=T_gt\circ\avg\eta^H_g\), the effect of the multiplicative average of \(H\), in terms of the effect of \(H\).
\begin{equation}
\label{eqn:12B.11.7}
	\avg\lambda^H_g=\integral_{tk=sg}\lambda^H_{gk}\circ(\lambda^H_k)^{-1}dk
\end{equation}
Details will be supplied in \S\ref{sec:16a.3}, along with a proof of the following result:

\begin{prop}\label{prop:12B.11.7} Let\/ \(\varPhi\) be any effective\textemdash a~fortiori, non-de\-gen\-er\-ate\textemdash connection on a proper Lie groupoid\/ \(\varGamma\tto M\). Then, for an arbitrary choice of normalized Haar systems on\/ \(\varGamma\tto M\), the multiplicative average\/ \(\avg\varPhi\) of\/ \(\varPhi\) is a multiplicative connection. \end{prop}

In fact, for any proper Lie groupoid \(\varGamma\) endowed with some choice of normalized Haar systems the process that to each non-de\-gen\-er\-ate groupoid connection \(\varPhi\) on \(\varGamma\) assigns the multiplicative average of \(\varPhi\) defines a \(C^\infty\) mapping between \(C^\infty\)\mdash spaces
\begin{equation}
\label{eqn:16a.8}
	\Conn_\div(\varGamma)\longto\Conn_1(\varGamma)\emphpunct, \varPhi\mapsto\avg\varPhi,
\end{equation}
which we call the \emph{averaging operator} for groupoid connections on \(\varGamma\). The multiplicative connections lie within the fixed-point set of this operator. By the proposition, this operator carries effective connections into multiplicative connections. Notice that since whenever \(\varPhi\) is effective so is every connection on the line segment \(\{\varPhi+t(\avg\varPhi-\varPhi):0\leq t\leq 1\}\) the averaging operator must provide a strong \(C^\infty\) deformation retraction of the \(C^\infty\)\mdash space \(\Econ(\varGamma)\) onto its subspace \(\Mcon(\varGamma)\). In general, the statement that \(\Econ(\varGamma)\) retracts onto \(\Mcon(\varGamma)\) is false for non-prop\-er Lie groupoids; cf.~Example~\ref{npar:14A.3.4} below.

Evidently, every connection on a \emph{Lie bundle} is effective. (By a Lie bundle we mean a Lie groupoid whose source and target map coincide.) More in general, the same is true of every connection on any Lie groupoid whose associated Lie algebroid has zero anchor map. It follows from Proposition~\ref{prop:12B.11.7} that in the proper case these groupoids always admit multiplicative connections. Generalizing further, it is easy to see that every regular Lie groupoid whose \emph{longitudinal bundle} is trivializable admits effective connections. (Cf.~Example~\ref{prop:14A.3.2} below; by definition, the longitudinal bundle of a regular Lie groupoid \(\varGamma\tto M\) is the subbundle of \(TM\) consisting of all the vectors tangent to the orbits of \(\varGamma\).) Again, Proposition~\ref{prop:12B.11.7} implies that any such groupoid which is also proper admits multiplicative connections. These remarks serve to illustrate the usefulness of our averaging method already at the relatively low level of sophistication of this section thus far.

\subsection*{Fast convergence}

Next, for a general (non effective) non-de\-gen\-er\-ate connection \(\varPhi\) we want to consider the sequence of connections \(\avg\varPhi,\avg{\avg\varPhi},\mathellipsis\) that one obtains by repeatedly averaging \(\varPhi\) (provided this sequence is at all defined), and understand its limiting behavior.

Let \(E\) be an arbitrary vector bundle over the base \(M\) of our proper Lie groupoid \(\varGamma\). Let \(\Psr(\varGamma;E)\) denote the \(C^\infty\)\mdash space \(\Gamma^\infty\bigl(\varGamma;L(s^*E,t^*E)\bigr)\) of pseudo-rep\-re\-sen\-ta\-tions of \(\varGamma\) on \(E\). Also, let \(\Psr_\div(\varGamma;E)\subset\Psr(\varGamma;E)\) denote the subspace of invertible pseudo-rep\-re\-sen\-ta\-tions, and \(\Psr_1(\varGamma;E)\) that of unital pseudo-rep\-re\-sen\-ta\-tions. Motivated by the formula \eqref{eqn:12B.11.7}, for every \(\lambda\in\Psr_\div(\varGamma;E)\emphpunct, g\in\varGamma\) we set%
\begin{equation}
\label{eqn:12B.12.4}
	\avg\lambda(g)\bydef\integral_{tk=sg}\lambda(gk)\circ\lambda(k)^{-1}dk\text.
\end{equation}
It follows from the fundamental properties of Haar integrals depending on parameters recalled previously that \(\avg\lambda\) belongs to \(\Psr_1(\varGamma;E)\) i.e.~is a unital pseudo-rep\-re\-sen\-ta\-tion of \(\varGamma\) on \(E\). The integral formula \eqref{eqn:12B.12.4} gives rise to a \(C^\infty\)\mdash mapping between \(C^\infty\)\mdash spaces
\begin{equation}
\label{eqn:16a.10}
	\Psr_\div(\varGamma;E)\longto\Psr_1(\varGamma;E)\emphpunct, \lambda\mapsto\avg\lambda,
\end{equation}
which we call the \emph{averaging operator} for pseudo-rep\-re\-sen\-ta\-tions of \(\varGamma\) on \(E\).

Let us endow \(E\) with some vec\-tor-bun\-dle metric \(\phi\) of class \(C^\infty\) (Riemannian or Hermitian, depending on whether \(E\) is real or complex). For every pair of base points \(x,y\in M\) we have a norm \(\|\void\|_{x,y}\) on \(L(E_x,E_y)\) given by
\begin{equation}
\label{eqn:16a.11}
	\|\lambda\|_{x,y}:=\sup_{|\varv|_x=1}|\lambda\varv|_y,
\end{equation}
where \(|\void|_x\) stands for the usual norm on \(E_x\) defined by \(|\varv|_x:=\sqrt{\phi_x(\varv,\varv)}\). Now for every \(\lambda\in\Psr(\varGamma;E)\) let us set%
\begin{subequations}
\label{eqn:12B.12.8}
\begin{align}
\label{eqn:12B.12.8a}
	b(\lambda)&:=\sup_{g\in\varGamma}\|\lambda(g)\|_{sg,tg}	\text{\qquad	and}
\\%
\label{eqn:12B.12.8b}
	c(\lambda)&:=\sup_{(g',g)\in\varGamma\ftimes st\varGamma}\|\lambda(g'g)-\lambda(g')\circ\lambda(g)\|_{sg,tg'}\text.
\end{align}
\end{subequations}
For all \(\lambda\in\Psr_1(\varGamma;E)\) satisfying the condition \(c(\lambda)<1\), it is possible to show that \(\lambda\) must be invertible and that the following estimates hold (compare \S\ref{sec:16a.4}).%
\begin{subequations}
\label{eqn:12B.12.10}
\begin{gather}
\label{eqn:12B.12.10a}
	\|\avg\lambda(g)\|_{sg,tg}\leq\frac{b(\lambda)}{1-c(\lambda)}
\\%
\label{eqn:12B.12.10b}
	\|\avg\lambda(g'g)-\avg\lambda(g')\circ\avg\lambda(g)\|_{sg,tg'}\leq 2\left(\frac{b(\lambda)}{1-c(\lambda)}\right)^2c(\lambda)^2
\end{gather}
\end{subequations}

\begin{lem}\label{lem:12B.12.8} Let\/ \(\{b_0,b_1,\dotsc,b_l\}\) and\/ \(\{c_0,c_1,\dotsc,c_l\}\) be two finite sequences of non-neg\-a\-tive real numbers of length say \(l+1\geq 2\). Suppose that for every index\/ \(i\) between\/ \(0\) and\/ \(l-1\) the following implication is true:
\begin{equation}
\label{eqn:12B.12.11}
 c_i<1\seq\left\{%
\begin{aligned}
	b_{i+1}&\leq\frac{b_i}{1-c_i}	\text{\quad	and}
\\	c_{i+1}&\leq 2\left(\frac{b_i}{1-c_i}\right)^2c_i^2\text.
\end{aligned}\right.%
\end{equation}
Also suppose that\/ \(b_0\geq 1\) and that\/ \(\varepsilon:=6b_0^2c_0\leq\frac 23\). Then, the following inequalities must hold for every index\/ \(i=0,1,\dotsc,l\).%
\begin{subequations}
\label{eqn:12B.12.12}
\begin{gather}
\label{eqn:12B.12.12a}
	c_i\leq\frac{\varepsilon^{2^i}}{6b_0^2}
\\%
\label{eqn:12B.12.12b}
	\frac{b_i}{1-c_i}\leq\sqrt 3b_0
\end{gather}
\end{subequations} \end{lem}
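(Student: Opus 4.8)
The plan is to prove \eqref{eqn:12B.12.12a} and \eqref{eqn:12B.12.12b} simultaneously by induction on $i$, treating \eqref{eqn:12B.12.12a} as the principal statement and re-deriving \eqref{eqn:12B.12.12b} at each stage from the instances of \eqref{eqn:12B.12.12a} that have already been secured. For the base case $i=0$, inequality \eqref{eqn:12B.12.12a} is in fact an equality, since by hypothesis $\varepsilon=6b_0^2c_0$; and \eqref{eqn:12B.12.12b} follows because $c_0=\varepsilon/6b_0^2\leqq\tfrac19$ (using $\varepsilon\leqq\tfrac23$ together with $b_0\geqq1$), which sits comfortably below $1-1/\sqrt3$, so that $(1-c_0)^{-1}\leqq\sqrt3$.

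For the inductive step, I would assume \eqref{eqn:12B.12.12a} and \eqref{eqn:12B.12.12b} for all indices up to some $i<l$ and first establish \eqref{eqn:12B.12.12a} at $i+1$. The bound $c_i\leqq\varepsilon^{2^i}/6b_0^2\leqq\tfrac19<1$ licenses the implication \eqref{eqn:12B.12.11} at index $i$, whence $c_{i+1}\leqq 2c_i^2[b_i/(1-c_i)]^2$; substituting the inductive bound $b_i/(1-c_i)\leqq\sqrt3\,b_0$ from \eqref{eqn:12B.12.12b} collapses the bracket and gives $c_{i+1}\leqq 6b_0^2c_i^2$, and squaring \eqref{eqn:12B.12.12a} at $i$ yields exactly $c_{i+1}\leqq\varepsilon^{2^{i+1}}/6b_0^2$. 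This is the step where the quadratic self-improvement in \eqref{eqn:12B.12.11} converts the hypothesis into the double-exponential decay.

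The substantive point is \eqref{eqn:12B.12.12b} at $i+1$, which cannot be obtained by naively iterating $b_{j+1}\leqq b_j/(1-c_j)$, since that alone would let the ratio grow without control. Instead I would telescope the recursion (legitimate because each $c_j<1$, as just noted) to get $b_{i+1}/(1-c_{i+1})\leqq b_0\prod_{j=0}^{i+1}(1-c_j)^{-1}$, and then bound the product by exploiting the decay recorded in \eqref{eqn:12B.12.12a}. Concretely, since $2^j\geqq j+1$ and $\varepsilon\leqq\tfrac23<1$ one has $\sum_{j\geqq0}\varepsilon^{2^j}\leqq\sum_{j\geqq0}\varepsilon^{\,j+1}=\varepsilon/(1-\varepsilon)\leqq2$, so that $\sum_{j=0}^{i+1}c_j\leqq\tfrac1{3b_0^2}\leqq\tfrac13$; the elementary inequality $\prod_j(1-c_j)\geqq1-\sum_jc_j$ then forces $\prod_{j=0}^{i+1}(1-c_j)^{-1}\leqq\tfrac32$, and hence $b_{i+1}/(1-c_{i+1})\leqq\tfrac32 b_0\leqq\sqrt3\,b_0$. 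The main obstacle is precisely this last computation: the crux is recognizing that the accumulating factors $(1-c_j)^{-1}$ are tamed by the summability of the $c_j$, which is itself guaranteed by the double-exponential bound \eqref{eqn:12B.12.12a}. The target constant $\sqrt3$ leaves a deliberate margin (one reaches only $\tfrac32$), and it is this slack that makes the bookkeeping robust and closes the induction.
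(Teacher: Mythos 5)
Your argument is correct and closes the induction exactly where the paper does: both proofs telescope $b_{i+1}\leq b_i/(1-c_i)$ into $b_n/(1-c_n)\leq b_0\prod_{j=0}^{n}(1-c_j)^{-1}$ and then tame the accumulated product by means of the summability of the $c_j$ that the doubly exponential bound \eqref{eqn:12B.12.12a} provides, after which the step $c_{n+1}\leq 2c_n^2\bigl[b_n/(1-c_n)\bigr]^2\leq 6b_0^2c_n^2=\varepsilon^{2^{n+1}}/(6b_0^2)$ is identical. Where you genuinely diverge is in how the product is estimated. The paper writes $\prod(1-c_j)^{-1}=\exp\bigl(-\sum\log(1-c_j)\bigr)$, invokes the inequality $-\log(1-t)\leq t+t^2$ for $t\in[0,\tfrac12]$, uses $2^j\geq 2j$ to sum the resulting series, and arrives at the bound $\exp(\tfrac12)\leq\sqrt3$ after explicit arithmetic with the constants $\tfrac95$ and $\tfrac{81}{65}$. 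You instead apply the Weierstrass product inequality $\prod(1-c_j)\geq 1-\sum c_j$ together with the cruder comparison $2^j\geq j+1$, obtaining $\sum c_j\leq\frac{1}{6b_0^2}\cdot\frac{\varepsilon}{1-\varepsilon}\leq\frac13$ and hence the bound $\tfrac32\leq\sqrt3$ for the product. Your route is shorter, avoids the logarithmic expansion entirely, and even yields a marginally better constant ($\tfrac32$ versus $e^{1/2}\approx1.65$); the paper's sharper per-term estimate buys nothing here, since the target $\sqrt3$ leaves slack either way. The remaining difference\textemdash your simultaneous induction on \eqref{eqn:12B.12.12a} and \eqref{eqn:12B.12.12b} versus the paper's preliminary claim that \eqref{eqn:12B.12.12a} up to index $n$ implies \eqref{eqn:12B.12.12b} at $n$\textemdash is a purely cosmetic reorganization.
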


The proof will be given in \S\ref{sec:16a.4}. For an arbitrary open subset \(U\) of \(M\) let \(\lambda\mathbin|U\) denote the pseudo-rep\-re\-sen\-ta\-tion of the open subgroupoid \(\varGamma\mathbin|U\tto U\) of \(\varGamma\tto M\) on \(E\mathbin|U\) induced by \(\lambda\) upon restriction. The above lemma motivates our next definition.

\begin{defn}\label{defn:12B.14.1} A unital pseudo-rep\-re\-sen\-ta\-tion \(\lambda\in\Psr_1(\varGamma;E)\) is \emph{nearly multiplicative} or a \emph{near representation} if for each point in \(M\) one can find an invariant open neighborhood \(U=\varGamma U\) with the property that the inequality below holds for some choice of \(C^\infty\) metrics on \(E\mathbin|U\).
\begin{equation}
\label{eqn:12B.14.1}
	c(\lambda\mathbin|U)\leq\mathinner\frac 19b(\lambda\mathbin|U)^{-2}
\end{equation}
A unital connection \(\varPhi\in\Conn_1(\varGamma)\) is \emph{nearly effective} if the associated pseudo-rep\-re\-sen\-ta\-tion \(\lambda^\varPhi\in\Psr_1(\varGamma;TM)\) (i.e.~the effect of \(\varPhi\)) is a near representation. \end{defn}

\noindent We might occasionally refer to nearly effective connections improperly as ``nearly multiplicative'' connections, as we did in the title.

Near representations turn out to be always invertible (cf.~\S\ref{sec:16a.5}) so for any near representation \(\lambda\) it makes sense to consider the pseudo-rep\-re\-sen\-ta\-tion \(\avg\lambda\) defined by our averaging formula \eqref{eqn:12B.12.4}. One can then show that \(\avg\lambda\) itself must be a near representation. One therefore obtains a whole sequence of \emph{averaging iterates} \(\avg[i]\lambda\) of \(\lambda\) which one constructs recursively by setting \(\avg[0]\lambda:=\lambda\) and \(\avg[i+1]\lambda:=\avg(\avg[i]\lambda)\) for all \(i\).

\begin{thm}[\bfseries Fast Convergence Theorem~A\normalfont ]\label{thm:12B.14.2} Let\/ \(\varGamma\tto M\) be a proper Lie groupoid. Let\/ \(\lambda\in\Psr_1(\varGamma;E)\) be a unital pseudo-rep\-re\-sen\-ta\-tion of\/ \(\varGamma\tto M\) on some vector bundle\/ \(E\) over\/ \(M\). Suppose that\/ \(\lambda\) is nearly multiplicative. Then, for every choice of normalized Haar systems on\/ \(\varGamma\tto M\), the sequence of successive averaging iterates of\/ \(\lambda\) obtained by recursive application of the formula\/ \eqref{eqn:12B.12.4}
\begin{equation*}
	\avg[0]\lambda:=\lambda\emphpunct, \avg[1]\lambda:=\avg\lambda\emphpunct, \dotsc\emphpunct, \avg[i+1]\lambda:=\avg(\avg[i]\lambda)\emphpunct, \dotsc\in\Psr_1(\varGamma;E)
\end{equation*}
converges within the Fr\'e\-chet space\/ \(\Gamma^\infty\bigl(\varGamma;L(s^*E,t^*E)\bigr)\) (endowed with the\/ \(C^\infty\)\mdash topology) towards a unique representation, say, \(\avg[\infty]\lambda\), of\/ \(\varGamma\tto M\) on\/ \(E\). \end{thm}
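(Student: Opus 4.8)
The plan is to localize, reduce the whole statement to the recursive estimates of Section~\ref{sec:4}, and then upgrade the resulting $C^0$ control to the full $C^p$-topology by an interpolation argument. First I would fix a base point and, using Definition~\ref{defn:12B.14.1}, an invariant open neighbourhood $V=\varGamma V$ together with a metric $\psi$ on $\restr E\to V\endrestr$ for which \eqref{eqn:12B.14.1} holds. Since the seminorms defining the $C^p$-topology on $\Psr^p(\varGamma;E)$ are local (Appendix~\ref{sec:A}) and the source image of any compact subset of $\varGamma$ is covered by finitely many such $V$, it suffices to prove convergence on each open subgroupoid $\restr\varGamma\to V\endrestr$ separately. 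On $V$ I set $b_i:=b_{V,\psi}(\avg[i]\lambda)$ and $c_i:=c_{V,\psi}(\avg[i]\lambda)$. The estimates \eqref{eqn:12B.12.10a} and \eqref{eqn:12B.12.10b} show that these two sequences satisfy the hypothesis \eqref{eqn:12B.12.11} of Lemma~\ref{lem:12B.12.8}, while unitality gives $b_0\geqq1$ and the near-multiplicativity inequality \eqref{eqn:12B.14.1} forces $\varepsilon=6b_0^2c_0\leqq\tfrac23$. Thus Lemma~\ref{lem:12B.12.8} applies and yields $c_i\leqq\varepsilon^{2^i}/(6b_0^2)$ together with $b_i/(1-c_i)\leqq\sqrt3\,b_0$ for all $i$; in particular the $C^0$ multiplicativity defect decays \emph{doubly} exponentially while the $C^0$ norms stay bounded.

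From here the $C^0$ convergence is immediate. By \eqref{eqn:12B.12.5a} one has $\avg[i+1]\lambda(g)-\avg[i]\lambda(g)=\int_{tk=sg}\Delta^{\avg[i]\lambda}(gk,k)\,d\nu_{sg}(k)$, so \eqref{eqn:12B.12.9b} together with the bounds just obtained gives a uniform estimate $\lVert\avg[i+1]\lambda-\avg[i]\lambda\rVert_{C^0(V)}\leqq c_i\cdot b_i/(1-c_i)$, which is dominated by a constant times $\varepsilon^{2^i}$ and hence summable. Therefore $\{\avg[i]\lambda\}$ is uniformly Cauchy on $V$ and converges in $C^0$. The genuine difficulty is the passage to $C^p$ for $p\geqq1$, because the top-order derivatives of $\Delta^{\avg[i]\lambda}$ are of size $\lVert\avg[i]\lambda\rVert_{C^p}$ and are \emph{not} multiplied by the small factor $c_i$, so a naive $C^p$ estimate of the differences does not decay.

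The key structural input that rescues the argument is the \emph{quadratic} character of the multiplicativity defect of an average, equation \eqref{eqn:12B.12.5b}: the defect of $\avg\lambda$ is an integral of \emph{products} of two copies of $\Delta^\lambda$. I would first record the standard convexity (Landau--Kolmogorov) interpolation inequalities for sections over compact sets, $\lVert f\rVert_{C^j}\leqq C\lVert f\rVert_{C^0}^{1-j/p}\lVert f\rVert_{C^p}^{j/p}$, and deduce from them, via Young's inequality, the tame product estimate $\lVert fg\rVert_{C^p}\leqq C(\lVert f\rVert_{C^0}\lVert g\rVert_{C^p}+\lVert f\rVert_{C^p}\lVert g\rVert_{C^0})$. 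Writing $\alpha_i$, $\gamma_i$, $B_i$ for the $C^0$ defect, the $C^p$ defect, and the $C^p$ norm of $\avg[i]\lambda$ over $V$, differentiation of \eqref{eqn:12B.12.5} under the Haar integral---justified by the fundamental lemmas on Haar integrals depending on parameters, Propositions~\ref{lem:12B.10.6} and \ref{lem:12B.20.1}---combined with the tame product estimate and the Leibniz and inverse-derivative rules yields a coupled recursion of the shape $B_{i+1}\leqq B_i+\alpha_i\,P(B_i)+C\gamma_i$ and $\gamma_{i+1}\leqq C\alpha_i(\gamma_i+\alpha_i)$, where $P$ is a fixed polynomial and the constants absorb the uniformly bounded factor $\lVert(\avg[i]\lambda)^{-1}\rVert_{C^0}\leqq\sqrt3\,b_0$. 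Because $\sum_i\alpha_i<\infty$ by the doubly exponential decay, a simultaneous induction on $i$ (handling the finitely many initial steps by hand) shows that $\sup_iB_i<\infty$ and that $\gamma_i\to0$, again doubly exponentially.

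Feeding the decay of $\gamma_i$ and the boundedness of $B_i$ back into the $C^p$ version of \eqref{eqn:12B.12.5a} bounds $\lVert\avg[i+1]\lambda-\avg[i]\lambda\rVert_{C^p(V)}$ by a summable sequence, so $\{\avg[i]\lambda\}$ is $C^p$-Cauchy on each $V$ and therefore converges in the Fréchet space $\Psr^p(\varGamma;E)$ to a section $\avg[\infty]\lambda$ of class $C^p$; uniqueness of the limit is automatic since the space is Hausdorff. As every $\avg[i]\lambda$ is unital and its multiplicativity defect tends to $0$ in $C^p$, the limit is unital and multiplicative, hence a representation. I expect the real obstacle to lie in the third paragraph: producing the differentiated forms of \eqref{eqn:12B.12.5} under the Haar integral with sufficiently explicit constants, and arranging the interpolation so that the doubly exponential smallness of the $C^0$ defect dominates the at-worst polynomially driven growth of the higher derivatives and closes the coupled recursion for $(B_i,\gamma_i)$.
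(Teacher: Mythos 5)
Your overall architecture matches the paper's quite closely: doubly exponential decay of the $C^0$ defect via Lemma~\ref{lem:12B.12.8}, the quadratic structure of the defect of an average coming from \eqref{eqn:12B.12.5b}, a coupled recursion for the higher-order norm and higher-order defect driven by the summable lower-order defect, and a simultaneous induction to close it (your closing lemma is, up to notation, exactly the paper's Lemma~\ref{lem:12B.14.4}, with $b'_i,c'_i,c_i$ playing the roles of your $B_i,\gamma_i,\alpha_i$). The one genuinely different ingredient is how you pass from $C^0$ to $C^p$: you propose Landau--Kolmogorov interpolation plus Young's inequality to get the tame product estimate in one jump, whereas the paper inducts on the order of differentiability $r$ one derivative at a time, using only the Leibniz-type estimates \eqref{eqn:12B.13.5b} and \eqref{eqn:12B.13.9} (which need no interpolation and hold over arbitrary compact closures of relatively compact open sets). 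Your route is plausible but buys nothing here and costs an extra analytic hypothesis: the convexity inequalities for sup-norms of derivatives require the underlying compact sets to be reasonably shaped (a cone condition or the like), which is an avoidable verification over the sets $\overline{\varGamma^U_U}$. Also note that your recursion $\gamma_{i+1}\leqq C\alpha_i(\gamma_i+\alpha_i)$ is missing the factor $B_i$ multiplying the second $\alpha_i$ (compare \eqref{eqn:12B.14.15}); this is harmless since the corrected system is still the one Lemma~\ref{lem:12B.14.4} solves, but it should be written correctly because $B_i$ is not yet known to be bounded at that stage of the induction.

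The genuine gap is in the localization. You restrict to an invariant open $V=\varGamma V$, on which averaging indeed commutes with restriction, but $V$ is not relatively compact, so the $C^r$ estimates of Lemma~\ref{lem:12B.13.7} and \ref{lem:12B.13.8} and the continuity of the integration functional are not available over $V$; and once you pass to a compact subset to get actual seminorms, the averaging integral at a point $g$ ranges over the whole target fiber $\varGamma_{sg}$ and leaves that compact set, so the $C^r$ norm of $\avg\zeta$ over a compact set is not a priori controlled by $C^r$ norms of $\zeta$ over compact sets. This is precisely what the paper's \emph{averaging domains} are for: relatively compact, $\nu$\mdash adapted open sets $U$, for which the normalizing function cuts the integral down to $\varGamma^U_U$, the restricted Haar system is again normalized (Proposition~\ref{prop:12B.18.7}), averaging commutes with restriction (diagram \eqref{eqn:resavg=avgres}), and the localized integration functional of Proposition~\ref{lem:12B.20.1} is continuous between spaces of sections over the \emph{compact} sets $\overline{V\ftimes ft\varOmega}$ and $\overline V$. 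You cite Proposition~\ref{lem:12B.20.1} but never construct the adapted domains it requires, and your covering argument (``finitely many invariant $V$ cover the source image of a compact set'') does not produce them; one must also check that such $U$ exist in abundance, i.e.\ that the sets $\varGamma^U_U$ cover $\varGamma$. Without this step the passage from the local recursive estimates to Cauchyness for the seminorms generating the $C^p$\mdash topology on $\Psr^p(\varGamma;E)$ does not go through.
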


\noindent The proof can be found in \S\ref{sec:16a.5}.

Next, our formula \eqref{eqn:12B.11.7} says that \(\lambda^{\avg\varPhi}\) equals \(\avg(\lambda^\varPhi)\) for any non-de\-gen\-er\-ate connection \(\varPhi\). We thus have that any nearly effective connection \(\varPhi\) gives rise, by recursive averaging, to a sequence of nearly effective connections \(\within*\{\avg[i]\varPhi\}_{i=0}^\infty\). Our next result\textemdash whose proof is to be found in \S\ref{sec:16a.6}\textemdash is essentially a corollary of the preceding one.

\begin{thm}[\bfseries Fast Convergence Theorem~B\normalfont ]\label{thm:12B.15.1} Let\/ \(\varGamma\tto M\) be a proper Lie groupoid. Let\/ \(\varPsi\in\Conn_1(\varGamma)\) be a unital connection on\/ \(\varGamma\tto M\). Suppose that\/ \(\varPsi\) is nearly effective. Then for any choice of normalized Haar systems on\/ \(\varGamma\tto M\) the sequence of successive averaging iterates of\/ \(\varPsi\) constructed by recursive application of the averaging operator\/ \eqref{eqn:16a.8}
\begin{equation*}
	\avg[0]\varPsi:=\varPsi\emphpunct, \avg[1]\varPsi:=\avg\varPsi\emphpunct, \dotsc\emphpunct, \avg[i+1]\varPsi:=\avg(\avg[i]\varPsi)\emphpunct, \dotsc\in\Conn_1(\varGamma)
\end{equation*}
converges within the affine Fr\'e\-chet submanifold\/ \(\Conn(\varGamma)\subsetto\Gamma^\infty\bigl(\varGamma;L(s^*TM,T\varGamma)\bigr)\emphpunct, H\mapsto\eta^H\) towards a unique multiplicative connection\/ \(\avg[\infty]\varPsi\) on\/ \(\varGamma\tto M\). \end{thm}

\subsection*{Applications}

Let \(\varGamma\tto M\) be any proper Lie groupoid. The orbit \(\varGamma x\) corresponding to each base point \(x\) will be a smooth submanifold of \(M\). For every non-neg\-a\-tive integer \(q\) let
\begin{gather*}
	M_q:=\{x\in M:\dim\varGamma x=q\}
\\	M_{\leq q}:=\{x\in M:\dim\varGamma x\leq q\}
\end{gather*}
denote the set of base points \(x\) which lie on orbits \(\varGamma x\) of dimension equal to\textemdash resp., not greater than\textemdash \(q\); also, let \(M_{<q}\) denote the set-the\-o\-ret\-ic difference \(M_{\leq q}\smallsetminus M_q\). Every \(M_{\leq q}\) is obviously an invariant closed subset of \(M\). Moreover, since \(\varGamma\tto M\) is proper, every \(M_q\) is an invariant differentiable submanifold of \(M\), in which case we refer to \(M_q\), or to the differentiable subgroupoid \(\varGamma\mathbin|M_q\tto M_q\) of \(\varGamma\tto M\), as the \(q\)-th \emph{regular stratum} of \(\varGamma\tto M\). (Note that the restriction of \(\varGamma\mathbin|M_q\tto M_q\) over each invariant component of \(M_q\) is an ordinary Lie groupoid, but different invariant components of \(M_q\) might have different dimensions.)

Let \(Z\subset M\) be an arbitrary invariant differentiable submanifold. Every connection \(H\) on \(\varGamma\tto M\) restricts to a connection on \(\varGamma\mathbin|Z\tto Z\), which we designate \(H\mathbin|Z\) and refer to as the connection \emph{induced} by \(H\) along \(Z\). Explicitly,
\begin{equation*}
	\eta^{H|Z}_g:=\eta^H_g\mathbin|T_{sg}Z:T_{sg}Z\to(T_gs)^{-1}(T_{sg}Z)=T_g(\varGamma\mathbin|Z).
\end{equation*}
Because of the invariance of \(Z\), the effect under \(H\) of each arrow \(g\in\varGamma^Z_Z\) must carry the linear subspace \(T_{sg}Z\subset T_{sg}M\) into the linear subspace \(T_{tg}Z\subset T_{tg}M\). Hence
\begin{equation}
\label{eqn:16a.17}
	\lambda^{H|Z}_g=\lambda^H_g\mathbin|T_{sg}Z.
\end{equation}
Moreover, \(H\mathbin|Z\) will be effective or multiplicative whenever so is \(H\).

The next result is substantially a direct consequence of Theorem~\ref{thm:12B.15.1}:

\begin{thm}\label{prop:14A.5.2} Let\/ \(\varGamma\tto M\) be a Lie groupoid that is proper. Suppose\/ \(C\) is an invariant closed subset of\/ \(M\) and\/ \(U\) is an open neighborhood of\/ \(C\). Let\/ \(Z\) be an invariant differentiable submanifold of\/ \(M\). Suppose further that a multiplicative connection\/ \(\varPhi\) is given on\/ \(\varGamma\mathbin|U\tto U\) and that\/ \(\varTheta\) is a multiplicative connection on\/ \(\varGamma\mathbin|Z\tto Z\) whose restriction over\/ \(Z\cap U\) coincides with the connection induced by\/ \(\varPhi\) on\/ \(\varGamma\mathbin|U\cap Z\tto U\cap Z\). Then there exist open neighborhoods\/ \(V\) of\/ \(C\cup Z\) and multiplicative connections\/ \(\varPsi\) on\/ \(\varGamma\mathbin|V\tto V\) which induce\/ \(\varTheta\) along\/ \(Z\) and agree with\/ \(\varPhi\) over some open neighborhood of\/ \(C\) within\/ \(U\cap V\). \end{thm}

The proof is to be found in \S\ref{sec:16a.7}. The main application of this theorem is intended to be to the case \(C=M_{<q}\emphpunct, Z=M_q\). Whenever we are given some multiplicative connection \(\varPhi\) defined around \(M_{<q}\) which we know how to extend along \(M_q\), the theorem enables us to extend \(\varPhi\) over a whole open neighborhood of \(M_{\leq q}\) at the expense of shrinking the domain of definition of \(\varPhi\) around \(M_{<q}\). Note however that the shrinkage need not exceed the size of any invariant open neighborhood of \(M_{<q}\) whose closure is contained within the domain of definition of \(\varPhi\).

For every point \(s\) of a smooth manifold \(S\) and open subset \(U\) of the product \(M\times S\) let \(U_s\subset M\) denote the open set \(\{x\in M:(x,s)\in U\}\). Let us call \(U\) \emph{invariant} if so is \(U_s\) for every \(s\). By a \emph{\(C^\infty\) parametric family} \(H=\{H(s)\in\Conn(\varGamma\mathbin|U_s)\}\) of \emph{local connections} on \(\varGamma\tto M\) indexed over \(S\) with domain \(U\subset M\times S\) we simply mean an ordinary connection \(H\) on the restriction over \(U\) of the product Lie groupoid \(\varGamma\times S\tto M\times S\). Notice that the horizontal lift of any such connection \(H\) is necessarily of the form \(\eta^H_{(g,s)}=\eta^{H(s)}_g\times\id_{T_sS}\), where \(H(s)\) stands for the connection on \(\varGamma\mathbin|U_s\tto U_s\) which \(H\) induces along the invariant submanifold \(U_s=U\cap(M\times\{s\})\subset U\), so \(H\) is completely determined by the associated family \(H(s)\sidetext(s\in S)\) of ``partial'' connections on \(\varGamma\tto M\) with ``variable domains''.

\begin{prob}\label{prob:16a.1.9} Let \(\varGamma\tto M\) be any Lie groupoid. Let \(S\) be any smooth manifold. Let \(U\),~\(V\) be open subsets of \(M\times S\) such that \(\overline U\subset V\), and let \(\varPhi=\{\varPhi(s)\in\Mcon(\varGamma\mathbin|V_s)\}\) be any \(C^\infty\) parametric family of local multiplicative connections on \(\varGamma\) indexed over \(S\) with domain of definition \(V\). What are the precise obstructions to extending \(\varPhi\mathbin|U\) to a \(C^\infty\) parametric family \(S\to\Mcon(\varGamma)\) of (globally defined) multiplicative connections on \(\varGamma\)? \end{prob}

As we anticipated in the course of the introduction the two questions below are special cases of Problem~\ref{prob:16a.1.9} of particular interest to us:
\begin{enumerate}
\def\labelenumi{(\alph{enumi})}%
\itemsep=0pt%
 \item What are the obstructions to the existence of multiplicative connections on \(\varGamma\)?
 \item What are the obstructions to deforming two multiplicative connections \(\varPhi_0\) and \(\varPhi_1\) on \(\varGamma\) smoothly into each other through multiplicative connections?
\end{enumerate}
The first question, of course, corresponds to the pa\-ram\-e\-ter-free case \(S=\ast\) with \(U=V=\emptyset\), the second one, to the case \(S=\R\) with, e.g., \(U=M\times(\R\smallsetminus[0,1])\emphpunct, V=M\times(\R\smallsetminus\{\frac 12\})\), and \(\varPhi(s)\) given for \(s<\frac 12\) by \(\varPhi_0\) and for \(s>\frac 12\) by \(\varPhi_1\).

Let us now confine attention to the situation of {\em proper}\/ \(\varGamma\). We contend that in this case we simply need to understand the answer to Problem~\ref{prob:16a.1.9} for \(\varGamma\) {\em regular}. (Then under the hypothesis of regularity the problem appears amenable to treatment by the methods of standard equivariant obstruction theory, a viewpoint which we try to substantiate further in \S\ref{sec:16a.8}.) Our strategy for the reduction of the problem to the regular case goes as follows. To begin with, we observe that at the expense of replacing \(\varGamma\) with the product groupoid \(\varGamma\times S\tto M\times S\) we can always assume that we are in the pa\-ram\-e\-ter-free case \(S=\ast\). We can then try to build an extension of \(\varPhi\mathbin|U\) inductively by means of Theorem~\ref{prop:14A.5.2}, starting with an arbitrary multiplicative connection defined around \(\overline U\cup M_0\) whose restriction over \(U\) coincides with \(\varPhi\mathbin|U\), and then extending this connection along the regular strata \(M_q\) of higher orbit dimension \(q\geq 1\), one dimension at the time.

Further in detail, imagine that you have constructed some extension \(\varPhi_q\) of \(\varPhi\mathbin|U\) over a suitable open neighborhood \(V_q\) of \(\overline U\cup M_{<q}\) within \(M\). (When \(q=0\), you may of course take \(V_0=V\emphpunct, \varPhi_0=\varPhi\).) Suppose that for all proper, rank~\(q\), regular Lie groupoids of a certain type you know what the obstructions of Problem~\ref{prob:16a.1.9} are, and that (every invariant component of) the \(q\)-th regular stratum \(\varGamma\mathbin|M_q\tto M_q\) of \(\varGamma\) is a groupoid of that type. (We shall provide examples featuring complete descriptions of such obstructions in \S\ref{sec:16a.8} for \(q=1,2\).) Fix an arbitrary open neighborhood \(U_q\) of \(\overline U\cup M_{<q}\) so that \(\overline U_q\subset V_q\). Whenever the obstructions to the existence of a multiplicative connection \(\varTheta_q\in\Mcon(\varGamma\mathbin|M_q)\) whose restriction over \(M_q\cap U_q\) equals \(\varPhi_q\mathbin|U_q\cap M_q\) vanish, Theorem~\ref{prop:14A.5.2} applied to \(C=\overline U\cup M_{<q}\emphpunct, Z=M_q\emphpunct, \varPhi_q\mathbin|U_q\), and \(\varTheta_q\) implies that there ought to be \(\varPhi_{q+1}\in\Mcon(\varGamma\mathbin|V_{q+1})\) defined over some open neighborhood \(V_{q+1}\) of \((\overline U\cup M_{<q})\cup M_q=\overline U\cup M_{\leq q}\) which coincides with \(\varPhi_q\) around \(\overline U\cup M_{<q}\) and hence, a fortiori, extends \(\varPhi\mathbin|U\). You may then transfer the problem to the regular stratum of the next higher orbit dimension\textemdash until you eventually meet some non vanishing obstruction or else exhaust the whole of \(M\).

Notice that Theorem~\ref{prop:14A.5.2} also guarantees that in the above inductive process the germs of any two extensions \(\varPhi_{q+1}\) around \(\overline U\cup M_{\leq q}\) (for given \(\varPhi_q\) and \(\varTheta_q\)) have to be homotopic, so one's particular choice of \(\varPhi_{q+1}\) will not influence the outcome (feasibility) of the next step of the process. The choice of \(\varTheta_q\) however might. Question~(b) is therefore relevant even if one is only interested in Question~(a). The two questions are tightly interwoven.

\section{First examples}\label{sec:16a.2}

In order to help the reader digest the results described above and get some intuition about the topics under discussion, we pause for a while and give a list of basic examples. More advanced examples will be supplied in the final section of the paper. We start by looking into the simple case of action groupoids, which is already instructive.

Let \(G\) be an arbitrary Lie group. Let \(\mathfrak g\) stand for its Lie algebra. Recall that the \emph{Mau\-rer\textendash Car\-tan form} on \(G\) is the Lie-al\-ge\-bra valued 1-form \(\omega:TG\to\mathfrak g\) given at each group element \(g\in G\) by \(\omega_g:=T_g{\tau_g}^{-1}:T_gG\simto T_1G=\mathfrak g\), where \(\tau_g\) denotes right translation by \(g\). The differential \(T_{(g,h)}m\) of the group multiplication law \(m:G\times G\to G\) at any pair of elements \(g,h\in G\) is easily seen to be given by the following expression.
\begin{equation}
\label{eqn:12B.9.10}
	T_{(g,h)}m={\omega_{gh}}^{-1}\circ(\omega_g\circ\pr_1+\Ad_G(g)\circ\omega_h\circ\pr_2):T_gG\oplus T_hG\to T_{gh}G
\end{equation}

\begin{exmp}[Action groupoids]\label{exmp:16a.2.1} Now suppose that our Lie group \(G\) is acting on some smooth manifold \(U\). Let \(\pr\) stand for the projection \(G\times U\to U\). An arbitrary connection \(H\) on the action groupoid \(G\ltimes U\tto U\) gives rise to a Lie-al\-ge\-bra valued map
\begin{equation*}
	X^H:\pr^*TU\to\mathfrak g
\end{equation*}
obtained by composing the connection horizontal lift \(\eta^H:\pr^*TU\to T(G\times U)=TG\times TU\), first, with the projection on the \(TG\)~factor, and then, with the Mau\-rer\textendash Car\-tan form of \(G\). Note that given any smooth map \(X:\pr^*TU\to\mathfrak g\) which fibrates into a family of linear maps \(X_{g,u}:T_uU\to\mathfrak g\sidetext(g\in G\emphpunct, u\in U)\) there is exactly one connection \(H\) on \(G\ltimes U\tto U\) for which \(X=X^H\).

Now, in virtue of \eqref{eqn:12B.9.10}, the condition of multiplicativity of \(H\) \eqref{eqn:16a.2a} can be reformulated as a system of cocycle equations to be satisfied by the linear maps \(X^H_{g,u}\):%
\begin{subequations}
\label{eqn:12B.9.15}
\begin{equation}
\label{eqn:12B.9.15a}
	\Ad_G(g)\circ X^H_{h,u}-X^H_{gh,u}+X^H_{g,hu}\circ\lambda^H_{h,u}=0.
\end{equation}
The condition of unitality of \(H\) can likewise be reformulated as
\begin{equation}
\label{eqn:12B.9.15b}
	X^H_{1,u}=0.
\end{equation}
\end{subequations}
We point out that the zero morphism is always trivially a solution to \eqref{eqn:12B.9.15}, hence the connection \(\varPhi\) on \(G\ltimes U\tto U\) characterized by the condition \(X^\varPhi=0\) is always multiplicative. \end{exmp}

\begin{exmp}[Bundles of compact abelian Lie groups]\label{exmp:16a.2.2} Next, let us restrict attention to the case of a trivial \(G\)~action; in such case, our action groupoid \(G\ltimes U\tto U\) is simply the trivial Lie-group bundle \(G\times U\) over \(U\) with fiber \(G\).

Given a multiplicative connection \(\varPhi\) on \(G\times U\), let us choose any base tangent vector \(\varv\in T_uU\) and consider the Lie-al\-ge\-bra valued function \(Z:G\to\mathfrak g\emphpunct, g\mapsto X^\varPhi_{g,u}\varv\). Since \(\lambda^H\) is trivial for every groupoid connection \(H\) on \(G\times U\), \eqref{eqn:12B.9.15a} tells that \(Z\) must be a \(1\)\mdash cocycle for the Lie-group cohomology of \(G\) with coefficients in the adjoint representation:
\begin{equation*}
	\Ad_G(g)Z(h)-Z(gh)+Z(g)=0.
\end{equation*}

If our Lie group \(G\) is commutative, so that \(\Ad_G\) is the trivial representation, \(1\)\mdash cocycles \(Z\) will be the same thing as \emph{additive} functions: \(Z(gh)=Z(g)+Z(h)\). Now if \(\alpha:\R\to G\) is an arbitrary one-pa\-ram\-e\-ter subgroup of \(G\) then the composition \(Z\circ\alpha\) is a continuous additive map of \(\R\) into \(\mathfrak g\) and hence is an \(\R\)\mdash linear map. If the image of \(\alpha\) is a compact subgroup of \(G\) then the image of this subgroup under \(Z\) is a compact linear subspace of \(\mathfrak g\) and hence is necessarily equal to \(\{0\}\). We conclude that \(Z(g)=0\) for every \(g\) lying on a compact one-pa\-ram\-e\-ter subgroup of \(G\).

If our Lie group \(G\) is further compact (besides being commutative) then the set of all those group elements that lie on compact one-pa\-ram\-e\-ter subgroups of \(G\) is dense within the identity component \(G_0\) of \(G\). Every additive function \(Z\) must then vanish identically over \(G_0\). Being additive, \(Z\) can take only finitely many values, one for each component of \(G\). But the image of \(Z\) also has to be a \(\Z\)\mdash sublattice of \(\mathfrak g\); it can therefore only be zero. We conclude that on any compact commutative Lie group, the only \(1\)\mdash cocycle is the zero function.

So, in summary, the only multiplicative connection \(\varPhi\) on any trivial Lie bundle \(G\times U\) with compact abelian fiber \(G\) is the one characterized by the condition \(X^\varPhi=0\). It follows at once that every locally trivial bundle of compact abelian Lie groups admits exactly one multiplicative connection. \end{exmp}

\begin{exmp}[Pair groupoids]\label{exmp:16a.2.3} On a pair groupoid \(M\times M\tto M\) the multiplicative connections are easily recognized to be the same as the global trivializations of the tangent bundle \(TM\). Thus \(M\times M\tto M\) possesses multiplicative connections iff \(M\) is parallelizable. In fact, the pair groupoids over non-par\-al\-lel\-iz\-able manifolds constitute the simplest examples of proper Lie groupoids which do not admit any multiplicative (not even non-de\-gen\-er\-ate) connections. \end{exmp}

\begin{exmp}[Counterexample]\label{npar:14A.6.9} The nonexistence of multiplicative connections can be due to a variety of independent reasons. In our next example, \(\varGamma\) is a compact Lie groupoid with orbits of dimension~\(\leq 1\) over the two-sphere \(S^2\) which does not admit multiplicative connections, even though its restriction over the complement of each orbit does.

Let the non-triv\-i\-al element of \(\Z/2\) act as inversion on the circle group \(\mathbb T:=\{z\in\C:\abs z=1\}\). Let the corresponding semi-di\-rect product (Lie) group \(\mathbb T\rtimes\Z/2\) act on complex numbers \(\zeta\) by the rule $(z,-1)\zeta:=z\bar\zeta$. Also let the product group $\mathbb T\times\Z/2$ act on complex numbers by the rule $(z,\nu)\zeta:=z\zeta$. Glue the two action groupoids $(\mathbb T\rtimes\Z/2)\ltimes\C\tto\C$ and $(\mathbb T\times\Z/2)\ltimes\C\tto\C$ together along the Lie-group\-oid isomorphism
\begin{equation*}
	(\mathbb T\rtimes\Z/2)\ltimes\C\mathord\smallsetminus 0\ni(z,\nu;\zeta)\mapsto(z\zeta^\nu\bar\zeta/\abs\zeta^2,\nu;\zeta/\abs\zeta^2)\in(\mathbb T\times\Z/2)\ltimes\C\mathord\smallsetminus 0.
\end{equation*}
The resulting Lie groupoid \(\varGamma\tto S^2\) is proper, hence compact. Its orbits, namely the north pole, the south pole, and the regular level sets for the ``latitude'' function, are connected. However, the effect of the isotropic arrow $(1,-1;1)$ under any multiplicative connection on $(\mathbb T\rtimes\Z/2)\ltimes\C\tto\C$ must be non-triv\-i\-al (of period two), whereas its effect under any multiplicative connection on $(\mathbb T\times\Z/2)\ltimes\C\tto\C$ must be trivial. Hence $\varGamma\tto S^2$ cannot admit any (globally defined) multiplicative connection. \end{exmp}

\begin{exmp}[Counterexample]\label{npar:14A.2.4} Our next example features a {\em source-con\-nect\-ed}, compact Lie groupoid over the three-sphere \(S^3\) which does not admit multiplicative connections.

Let the Lie group $\SO(3)\times\R/\Z$ act on $\R^3$ by the law \[%
	(P,[\theta])\cdot\left[%
\begin{smallmatrix}
                    x
\\[.4\baselineskip] y
\\[.4\baselineskip] z
\end{smallmatrix}\right]:=P\left[%
\begin{smallmatrix}
                    x
\\[.4\baselineskip] y
\\[.4\baselineskip] z
\end{smallmatrix}\right].
\] Identify $\R^3\mathord\smallsetminus 0$ with the product $\R_{>0}\times S^2$~(``spherical coordinates''). Then, consider the following automorphism of the action groupoid $\bigl(\SO(3)\times\R/\Z\bigr)\ltimes\R^3\mathord\smallsetminus 0\tto\R^3\mathord\smallsetminus0$,
\begin{equation}
\label{eqn:14A.2.1}
	(P,[\theta];r,\left[%
\begin{smallmatrix}
                    x
\\[.4\baselineskip] y
\\[.4\baselineskip] z
\end{smallmatrix}\right])\mapsto(P\exp\left(2\pi\theta\left[%
\begin{smallmatrix}
                     0 &-z & y
\\[.3\baselineskip]  z & 0 &-x
\\[.3\baselineskip] -y & x & 0
\end{smallmatrix}\right]\right),[\theta]\emphpunct*;1/r,\left[%
\begin{smallmatrix}
                    x
\\[.4\baselineskip] y
\\[.4\baselineskip] z
\end{smallmatrix}\right]).
\end{equation}
(That this is a well-de\-fined Lie-group\-oid homomorphism, follows from the basic relation \[%
	P\exp\left[%
\begin{smallmatrix}
                     0 &-z & y
\\[.3\baselineskip]  z & 0 &-x
\\[.3\baselineskip] -y & x & 0
\end{smallmatrix}\right]P^{-1}=\exp\left[%
\begin{smallmatrix}
                     0  &-z' & y'
\\[.2\baselineskip]  z'	& 0  &-x'
\\[.2\baselineskip] -y' & x' & 0
\end{smallmatrix}\right]\emphpunct, \text{where}\emphpunct{ }\left[%
\begin{smallmatrix}
                    x'
\\[.4\baselineskip] \smash[t]{y'}
\\[.4\baselineskip] \smash[t]{z'}
\end{smallmatrix}\right]=P\left[%
\begin{smallmatrix}
                    x
\\[.4\baselineskip] y
\\[.4\baselineskip] z
\end{smallmatrix}\right]\text{.)}
\] The Lie groupoid \(\varGamma\tto S^3\) obtained by gluing two copies of the action groupoid $\bigl(\SO(3)\times\R/\Z\bigr)\ltimes\R^3$ along the automorphism \eqref{eqn:14A.2.1} is compact and source connected. Its orbits are the two poles and the ``isolatitudinal'' spheres. We leave it as an exercise for the reader to confirm that \(\varGamma\tto S^3\) can admit no multiplicative connections, even though\textemdash like in our previous example\textemdash its restriction over the complement of each orbit does. \end{exmp}

\subsection*{The regular case}

The remainder of this section is devoted to a preliminary survey of effective connections in the regular case. By a \emph{longitudinal representation} of a regular Lie groupoid we intend a representation on the vector distribution tangent to the orbit foliation associated with the groupoid. As it is not hard to imagine, effective connections on any regular groupoid bear a close relationship to longitudinal representations. We start by making such relationship precise.

\begin{lem}\label{lem:14A.3.1} Let\/ \(\varGamma\tto M\) be an arbitrary regular Lie groupoid. Let\/ \(\varLambda\subset TM\) denote the longitudinal bundle of\/ \(\varGamma\tto M\). Suppose\/ \(\rho:\varGamma\to\GL(TM)\) is a tangent representation of\/ \(\varGamma\tto M\) having the property that for all arrows\/ \(g\) the linear maps\/ \(\rho(g):T_{sg}M\simto T_{tg}M\) carry longitudinal subspaces\/ \(\varLambda_{sg}\subset T_{sg}M\) into longitudinal subspaces\/ \(\varLambda_{tg}\subset T_{tg}M\) and the resulting quotient linear maps\/ \(T_{sg}M\mathbin/\varLambda_{sg}\simto T_{tg}M\mathbin/\varLambda_{tg}\) coincide with the\/ {\em infinitesimal effects} of the arrows~\cite[\S 1]{Tre6}. Also suppose that\/ \(\varPhi\) is a groupoid connection defined on the restriction of\/ \(\varGamma\tto M\) over some open subset\/ \(V\subset M\) whose effect\/ \(\lambda^\varPhi\) coincides with\/ \(\rho\mathbin|V\). Then, for every open subset\/ \(U\subset M\) such that\/ \(\overline U\subset V\), there is some connection on\/ \(\varGamma\tto M\) whose effect coincides with\/ \(\rho\) and whose restriction over\/ \(U\) coincides with\/ \(\varPhi\mathbin|U\). \end{lem}

\begin{proof} Endow \(M\) with some Riemannian metric. For every \(x\in M\), let \(N_x:={\varLambda_x}^\bot\subset T_xM\) stand for the orthogonal complement of \(\varLambda_x\subset T_xM\) with respect to the metric. The matrix of \(\rho\) relative to the vec\-tor-bun\-dle decomposition \(TM\cong\varLambda\oplus N\) reads \[%
	\rho(g)=:%
\begin{pmatrix}
	\alpha(g) & \ast
\\	0         & \nu(g)
\end{pmatrix},\] where \(\nu(g):N_{sg}\simto N_{tg}\) matches the infinitesimal effect of \(g\) under the canonical vec\-tor-bun\-dle identification \(N\cong TM/\varLambda\).

Next, pick an arbitrary connection \(H\) on \(\varGamma\tto M\) whose restriction over \(U\) coincides with \(\varPhi\mathbin|U\), and consider the difference \[%
	\delta^{H,\rho}(g):=\lambda^H(g)-\rho(g)=%
\begin{pmatrix}
	\lambda^H_\varLambda(g) & \ast
\\	0                       & \lambda^H_N(g)
\end{pmatrix}-%
\begin{pmatrix}
	\alpha(g) & \ast
\\	0         & \nu(g)
\end{pmatrix}=%
\begin{pmatrix}
	\ast & \ast
\\	0    & 0
\end{pmatrix},\] to be regarded as a vec\-tor-bun\-dle morphism \(\delta^{H,\rho}:s^*TM\to t^*\varLambda\). Now \(\d t:\ker{\d s}\to t^*\varLambda\) is an epimorphism of vector bundles over \(\varGamma\) and, therefore, splits. Fix an arbitrary splitting \(\xi:t^*\varLambda\to\ker{\d s}\), and set \[%
	\eta^\varPsi_g:=\eta^H_g-\xi_g\circ\delta^{H,\rho}_g.
\] Clearly this is the horizontal lift for some effective connection, say, \(\varPsi\), with effect \(\lambda^\varPsi=\rho\). Moreover since \(\delta^{H,\rho}\) vanishes identically over \(U\), it follows that \(\varPsi\) coincides with \(H\) and, hence, \(\varPhi\) over \(U\). \end{proof}

As a consequence of the lemma, every regular Lie groupoid \(\varGamma\tto M\) which admits representations on its own longitudinal bundle \(\varLambda\subset TM\) necessarily also admits effective connections. Indeed if one is given any such representation \(\alpha:\varGamma\to\GL(\varLambda)\), one can fix an arbitrary Riemannian metric on \(M\) and then, in the notations of the preceding proof, define \(\rho:\varGamma\to\GL(TM)\) with respect to the decomposition \(TM\cong\varLambda\oplus N\) to be \[%
	\rho:=%
\begin{pmatrix}
	\alpha & 0
\\	0      & \nu
\end{pmatrix}.\]

\begin{exmp}\label{prop:14A.3.2} Every regular Lie groupoid \(\varGamma\tto M\) whose longitudinal bundle \(\varLambda\subset TM\) is trivializable admits effective connections, because any vec\-tor-bun\-dle trivialization \(\tau:\R^q\times M\simto\varLambda\) gives rise to a longitudinal representation, \(\varGamma\to\GL(\varLambda)\emphpunct, g\mapsto\tau_{tg}\circ{\tau_{sg}}^{-1}\). \end{exmp}

\begin{exmp}\label{cor:14A.3.3} A transitive Lie groupoid \(\varGamma\tto M\) over a parallelizable manifold \(M\) always admits effective connections. \end{exmp}

\begin{exmp}[Counterexample]\label{npar:14A.3.4} If in the last two examples \(\varGamma\tto M\) is also taken to be proper, then, by Proposition~\ref{prop:12B.11.7}, \(\varGamma\tto M\) admits multiplicative connections. In general, in the absence of properness, the existence of effective connections does not\textemdash even in the regular case\textemdash imply that of multiplicative connections, as our next example shows.

Let us define \(\varGamma:=\varDelta/K\to\R\) to be the quotient of the trivial Lie bundle \(\varDelta:=\R\times\R\xto{\pr_2}\R\) by the \'etale \emph{Lie kernel}~(cf.~\cite[appendix]{Tre6}) \[%
	K:=\{\mkern 1mu(2\pi n/t,t):n\in\Z\et t\in\mathopen]0,\infty\mathclose[\mkern 1mu\}\cup 0\times\R.
\] If \(\varGamma\) admitted a multiplicative connection, the inverse image of that connection along the quotient projection (local diffeomorphism) \(\varDelta\to\varGamma\) would be a multiplicative connection on \(\varDelta\). Now, the restriction of \(\varGamma\) over the positive half-line \(\mathopen]0,\infty\mathclose[\) is a circle bundle, so, by Example~\ref{exmp:16a.2.2}, it admits a unique multiplicative connection, whose inverse image along the quotient projection necessarily coincides with the vector distribution tangent to the hyperbolae \(t\mapsto(\theta/t,t)\sidetext(\theta\in\R)\). But no groupoid connection on \(\varDelta\) could possibly be an extension of that vector distribution. \end{exmp}

\section{The averaging operator}\label{sec:16a.3}

Throughout the present section and in the next four, we shall be dealing with a fixed (but otherwise arbitrary) proper Lie groupoid, say, \(\varGamma\tto M\). Our goal in this section is to provide a proof of Proposition~\ref{prop:12B.11.7} and, with that, establish the equivalence of the following two properties\emphpunct: (i)~\em \(\varGamma\) admits an effective connection\em\emphpunct; (ii)~\em \(\varGamma\) admits a multiplicative connection\em. As a matter of fact, the definition of our averaging operator \eqref{eqn:16a.8} was originally inspired to us by the deformation argument used in \cite[Sections 2.3~to 2.5]{CS} to give a new proof of the linearization theorem for proper Lie groupoids. A rather special case of our definition is also implicit in Weinstein's proof of the local triviality of proper Lie bundles \cite[Theorem~7.1]{Wein}. The history of these ideas goes back at least as far as to Palais and Stewart \cite{PS}.

Just as a warm-up, before starting proving Proposition~\ref{prop:12B.11.7}, let us verify the assertions that precede its statement; the computations are straightforward, but exceptionally in this case we give the details because we think they are useful to further clarify our definitions. Let us begin with the equation \(\d s\circ\avg\eta^H=\id_{s^*TM}\). We have
\begin{align*}
	T_gs\circ\avg\eta^H_g&=\integral_{tk=sg}T_gs\circ(\eta^H_{gk}\div\eta^H_k)\circ(\lambda^H_k)^{-1}dk
\\	                     &=\integral_{tk=sg}T_kt\circ\eta^H_k\circ(\lambda^H_k)^{-1}dk
\\	                     &=\integral_{tk=sg}\lambda^H_k\circ(\lambda^H_k)^{-1}dk
\\	                     &=\integral_{tk=sg}\id_{T_{tk}M}dk=\id_{T_{sg}M}\text.
\end{align*}
Next, we must check that \(\avg\eta^H_{1x}\) equals \(T_x1\) for every \(x\in M\) (unitality). We have \((\eta^H_k\div\eta^H_k)\circ(\lambda^H_k)^{-1}=T_{tk}1\circ T_kt\circ\eta^H_k\circ(\lambda^H_k)^{-1}=T_{tk}1\circ\lambda^H_k\circ(\lambda^H_k)^{-1}=T_{tk}1\), whence \(\avg\eta^H_{1x}=\integral_{tk=x}T_{tk}1dk=T_x1\). Finally, there remains to check Eq.~\eqref{eqn:12B.11.7} giving the effect of \(\avg\eta^H\):
\begin{align*}
	T_gt\circ\avg\eta^H_g&=\integral_{tk=sg}T_gt\circ(\eta^H_{gk}\div\eta^H_k)\circ(\lambda^H_k)^{-1}dk
\\	                     &=\integral_{tk=sg}T_{gk}t\circ\eta^H_{gk}\circ(\lambda^H_k)^{-1}dk
\\	                     &=\integral_{tk=sg}\lambda^H_{gk}\circ(\lambda^H_k)^{-1}dk\text.
\end{align*}

\subsection*{Multiplicativity equations relative to a ``background'' connection}

When dealing with multiplicative connections on action groupoids (cf.~Example~\ref{exmp:16a.2.1}) we exploited the natural splitting of the tangent bundle of an action groupoid into its vertical and horizontal subbundles for the purpose of rewriting the condition of multiplicativity in terms of the vertical component of a connection. That proved to be useful, then, from the point of view of computations (cf.~Example~\ref{exmp:16a.2.2}). We now want to carry out an analogous rewriting of the condition of multiplicativity in the more general context of the present section.

Let \(\mathfrak g\to M\) denote the Lie algebroid of \(\varGamma\tto M\) i.e.~the vector bundle over \(M\) given by \(1^*\ker{\d s}\). For every arrow \(g\in\varGamma\) the right translation map \({\tau_g}^{-1}:\varGamma^{sg}\approxto\varGamma^{tg}\emphpunct, h\mapsto hg^{-1}\) is a diffeomorphism which makes \(g\) correspond to \(1_{tg}\). The invertible linear maps \(\omega_g\bydef T_g{\tau_g}^{-1}:T_g\varGamma^{sg}\simto T_{1tg}\varGamma^{tg}\) fit together into an isomorphism of vector bundles over \(\varGamma\),
\begin{equation}
\label{eqn:12B.9.5}
	\omega:\ker{\d s}\simto t^*\mathfrak g
\end{equation}
which generalizes the familiar ``Maurer\textendash Cartan form'' from Lie group theory.

Even though in the case of a general proper Lie groupoid there may be no global\textemdash let alone, canonical\textemdash trivializations of the source map available, we can at least always find such trivializations at the {\em infinitesimal}\/ level. Namely, let us randomly fix some connection \(\varPhi\) on \(\varGamma\tto M\). We shall henceforth refer to \(\varPhi\) as our ``background'' connection. The choice of \(\varPhi\) defines a splitting of the tangent bundle of \(\varGamma\) into a ``vertical'' and a ``horizontal'' component (with respect to the tangent source map) as in the diagram below, where \(\pi^\varPhi\bydef\id_{T\varGamma}-\eta^\varPhi\circ\d s:T\varGamma\to\ker{\d s}\) indicates the \emph{vertical projection} associated with \(\varPhi\).
\begin{equation}
\label{eqn:12B.11.8}
	\sigma^\varPhi\bydef(\omega\circ\pi^\varPhi,\d s):\mkern-\thickmuskip%
\xymatrix@C=-1em@M=.278em{%
 T\varGamma
 \ar[dr]_{\d s}
 \ar@{}[rr]|-*{\vphantom g\simto\vphantom g}
 &	&	t^*\mathfrak g\oplus s^*TM
		\ar[dl]^(.48){\pr_2}
\\&	s^*TM}
\end{equation}
This splitting determines an analogous decomposition of the tangent division map \(Tq_\div:T\varGamma_\div\sidetext(=T\varGamma\ftimes{Ts}{Ts}T\varGamma)\to T\varGamma\); namely, for each divisible pair of arrows \((g,h)\in\varGamma_\div\), there is a linear map of \(\mathfrak g_{tg}\oplus\mathfrak g_{th}\oplus T_{sg=sh}M\) into \(\mathfrak g_{tg}\) [resp.~\(T_{th}M\)] hereafter denoted \(\dot q^\varPhi_{g,h}\) [resp.~\(\dot s^\varPhi_{g,h}\)] characterized through the commutativity of the following diagram.
\begin{equation}
\label{eqn:12B.11.9}
 \begin{split}
\xymatrix@C=1.333em{%
 T_g\varGamma\ftimes{T_gs}{T_hs}T_h\varGamma \ar@{=}[d]
 \ar[r]^-*-<.7ex>{\sim}
 &	(\mathfrak g_{tg}\oplus T_{sg}M)\ftimes{\pr_2}{\pr_2}(\mathfrak g_{th}\oplus T_{sh}M)
	\ar@{=}[d]
\\ T_{(g,h)}\varGamma_\div
 \ar[d]^{T_{(g,h)}q_\div}
 &	\mathfrak g_{tg}\oplus\mathfrak g_{th}\oplus T_{sg=sh}M
	\ar@{.>}[d]^{(\dot q^\varPhi_{g,h},\dot s^\varPhi_{g,h})}
\\ T_{gh^{-1}}\varGamma
 \ar[r]^-*-<.7ex>{\sim}
 &	\mathfrak g_{tg}\oplus T_{th}M
}\end{split}
\end{equation}

Evidently, \(\dot s^\varPhi_{g,h}\) factors as
\begin{equation}
\label{eqn:12B.11.10}
\xymatrix@C=2.333em{%
 \mathfrak g_{tg}\oplus\mathfrak g_{th}\oplus T_{sh}M \ar[r]^-\pr
 &	\mathfrak g_{th}\oplus T_{sh}M \ar[r]^-{(\sigma^\varPhi_h)^{-1}}
	&	T_h\varGamma \ar[r]^-{T_ht}
		&	T_{th}M\text,
}\end{equation}
where \(\pr\) stands for the projection \((X,Y,\varv)\mapsto(Y,\varv)\). The expression \(\dot s^\varPhi_{g,h}(X,Y,\varv)\) must then be independent of \(g\) and \(X\): we may abbreviate it into \(\dot s^\varPhi_h(Y,\varv)\). Let us introduce a bunch of related shorthand, of which we are going to make use at our convenience.%
\begin{subequations}
\label{eqn:12B.11.11}
\begin{gather}
\label{eqn:12B.11.11a}
 \makebox[9pc][c]{$\dot q^\varPhi_\updownarrow(g,h):=\dot q^\varPhi_{g,h}(-,-,0)$}
\qquad	\makebox[9pc][c]{$\dot s^\varPhi_\updownarrow(h):=\dot s^\varPhi_h(-,0)$}
\\%
\label{eqn:12B.11.11b}
 \makebox[9pc][c]{$\dot q^\varPhi_\leftrightarrow(g,h):=\dot q^\varPhi_{g,h}(0,0,-)$}
\qquad	\makebox[9pc][c]{$\dot s^\varPhi_\leftrightarrow(h):=\dot s^\varPhi_h(0,-)$}
\end{gather}
\end{subequations}

Any other connection \(H\) on \(\varGamma\tto M\) will entirely be encoded into its \emph{vertical component}\/~\(X^{H:\varPhi}\) relative to the chosen ``background'' connection \(\varPhi\):
\begin{equation}
\label{eqn:12B.11.12}
	X^{H:\varPhi}\bydef\omega\circ\pi^\varPhi\circ\eta^H:s^*TM\to t^*\mathfrak g.
\end{equation}

By the above definitions, we have \(\sigma^\varPhi_g(\eta^H_g\varv)=([\omega_g\circ\pi^\varPhi_g]\eta^H_g\varv\emphpunct*,(T_gs)\eta^H_g\varv)=(X^{H:\varPhi}_g\varv,\varv)\) for all \(g\in\varGamma\emphpunct, \varv\in T_{sg}M\). Since \(\sigma^\varPhi_g\) is an invertible linear map, the multiplicativity condition \eqref{eqn:12B.11.4} on \(H\) for \((g,h)\in\varGamma_\div\emphpunct, \varv\in T_{sg=sh}M\) will be satisfied if, and only if,
\begin{align*}
	(X^{H:\varPhi}_{gh^{-1}}\lambda^H_h\varv,\lambda^H_h\varv)
		&=\sigma^\varPhi_{gh^{-1}}(\eta^H_{gh^{-1}}\lambda^H_h\varv)
\\		&=\sigma^\varPhi_{gh^{-1}}(\eta^H_g\varv\div\eta^H_h\varv)
\\		&=\sigma^\varPhi_{gh^{-1}}\bigl(T_{(g,h)}q_\div(\eta^H_g\varv,\eta^H_h\varv)\bigr)
\\		&=\bigl(\dot q^\varPhi_{g,h}(X^{H:\varPhi}_g\varv,X^{H:\varPhi}_h\varv,\varv)\emphpunct*,\dot s^\varPhi_h(X^{H:\varPhi}_h\varv,\varv)\bigr)
	&&\text{by \eqref{eqn:12B.11.9}}
\end{align*}
Suppressing \(\varv\) from the last identity, and making use of \eqref{eqn:12B.11.11}, we get the following couple of equations.%
\begin{subequations}
\label{eqn:12B.11.13}
\begin{gather}
\label{eqn:12B.11.13a}
	X^{H:\varPhi}_{gh^{-1}}\lambda^H_h=\dot q^\varPhi_\updownarrow(g,h)(X^{H:\varPhi}_g,X^{H:\varPhi}_h)+\dot q^\varPhi_\leftrightarrow(g,h)
\\%
\label{eqn:12B.11.13b}
	\lambda^H_h=\dot s^\varPhi_\updownarrow(h)X^{H:\varPhi}_h+\dot s^\varPhi_\leftrightarrow(h)
\end{gather}
\end{subequations}
We observe that the second of these equations is a tautology; indeed, by the above remark to the effect that \(\dot s^\varPhi_h(Y,\varv)\) equals \((T_ht)(\sigma^\varPhi_h)^{-1}(Y,\varv)\), cf.~\eqref{eqn:12B.11.10}, we have
\begin{align*}
 \lambda^H_h-\dot s^\varPhi_h(X^{H:\varPhi}_h,\id)
		&=T_ht\circ\eta^H_h-T_ht\circ(\sigma^\varPhi_h)^{-1}(X^{H:\varPhi}_h,\id)
\\		&=T_ht\circ(\eta^H_h-\eta^H_h)=0.
\end{align*}
The multiplicativity condition \eqref{eqn:12B.11.4} is therefore tantamount to the following single equation, which one obtains by substituting \eqref{eqn:12B.11.13b} into \eqref{eqn:12B.11.13a}, and which only involves the \(\varPhi\)\mdash vertical component of \(H\).
\begin{equation}
\label{eqn:12B.11.14}
	\dot q^\varPhi_\updownarrow(g,h)(X^{H:\varPhi}_g,X^{H:\varPhi}_h)=X^{H:\varPhi}_{gh^{-1}}\circ\bigl(\dot s^\varPhi_\leftrightarrow(h)+\dot s^\varPhi_\updownarrow(h)X^{H:\varPhi}_h\bigr)-\dot q^\varPhi_\leftrightarrow(g,h)
\end{equation}

In \eqref{eqn:12B.11.14}, the two ``horizontal'' terms \(\dot q^\varPhi_\leftrightarrow(\mathellipsis)\) and \(\dot s^\varPhi_\leftrightarrow(\mathellipsis)\) can be given slightly more intuitive expressions, as follows. For the first, since \(\dot s^\varPhi_h(0,\varv)=(T_ht)(\sigma^\varPhi_h)^{-1}(0,\varv)=(T_ht)\eta^\varPhi_h\varv\) for \(\varv\in T_{sh}M\), we see that%
\begin{subequations}
\begin{equation}
\label{eqn:12B.11.15}
	\dot s^\varPhi_\leftrightarrow(h)=\lambda^\varPhi_h.
\end{equation}
As to the second, assuming that \(\varPhi\) is {\em non-de\-gen\-er\-ate}, and letting \(\Delta^\varPhi\bydef q_\div^*\omega\circ q_\div^*\pi^\varPhi\circ\delta^\varPhi:s_\div^*TM\to q_\div^*t^*\mathfrak g\) denote the \(\varPhi\)\mdash vertical component of the ``division cocycle'' \eqref{eqn:12B.11.5} of \(\varPhi\), it is equally easy to see that
\begin{equation}
\label{eqn:12B.11.16}
	\dot q^\varPhi_\leftrightarrow(g,h)=\Delta^\varPhi(g,h)\lambda^\varPhi_h.
\end{equation} \end{subequations}

For any choice of a {\em non-de\-gen\-er\-ate}\/ ``background'' connection \(\varPhi\) on \(\varGamma\tto M\), a {\em unital}\/ connection \(H\) on \(\varGamma\tto M\) will thus be multiplicative if, and only if, its vertical component \(X^{H:\varPhi}\) relative to \(\varPhi\) satisfies the following equation for every divisible pair of arrows \((g,h)\).
\begin{equation}
\label{eqn:12B.11.17}
	\dot q^\varPhi_\updownarrow(g,h)(X^{H:\varPhi}_g,X^{H:\varPhi}_h)=X^{H:\varPhi}_{gh^{-1}}\circ\bigl(\lambda^\varPhi_h+\dot s^\varPhi_\updownarrow(h)X^{H:\varPhi}_h\bigr)-\Delta^\varPhi(g,h)\lambda^\varPhi_h
\end{equation}

\subsection*{Cocycle equations for the ``background'' connection}

Let \(g,h,k\in\varGamma\) satisfy \(sg=sh=sk\). Then
\begin{equation*}
	q_\div(g,k)=q_\div\bigl(q_\div(g,h)\emphpunct*,q_\div(k,h)\bigr).
\end{equation*}
If for every pair of indices \(i\neq j\in\{1,2,3\}\) we let \(q_{ij}\) denote the map of \(\varGamma\ftimes ss\varGamma\ftimes ss\varGamma\) into \(\varGamma\) given by \((g_1,g_2,g_3)\mapsto q_\div(g_i,g_j)\), we can rewrite the last identity more succinctly as
\begin{equation*}
	q_{13}=q_\div(q_{12},q_{32}).
\end{equation*}
Differentiating the latter identity at \((g,h,k)\), and taking into account the obvious relations \(T_{(g_1,g_2,g_3)}q_{ij}=T_{(g_i,g_j)}q_\div\circ\pr_{ij}\), where \(\pr_{ij}\) denotes the projection \((\varw_1,\varw_2,\varw_3)\mapsto(\varw_i,\varw_j)\), we get
\begin{equation*}
	(T_{(g,k)}q_\div)\pr_{13}=(T_{(g\smash[t]{h^{-1}},k\smash[t]{h^{-1}})}q_\div)\bigl((T_{(g,h)}q_\div)\pr_{12},(T_{(k,h)}q_\div)\pr_{32}\bigr).
\end{equation*}

After composing to the left with the invertible linear map \(\sigma^\varPhi_{g\smash[t]{k^{-1}}}\) and to the right with the linear map \((\eta^\varPhi_g,\eta^\varPhi_h,\eta^\varPhi_k)\), and making repeated use of the commutativity of the diagram \eqref{eqn:12B.11.9}, we obtain the following pair of equations for \(\varv\in T_{sg=sh=sk}M\).
\begin{gather*}
	\dot q^\varPhi_{g,k}(0,0,\varv)=\dot q^\varPhi_{g\smash[t]{h^{-1}},k\smash[t]{h^{-1}}}\bigl(\dot q^\varPhi_{g,h}(0,0,\varv)\emphpunct*,\dot q^\varPhi_{k,h}(0,0,\varv)\emphpunct*,\dot s^\varPhi_h(0,\varv)\bigr)
\\	\dot s^\varPhi_k(0,\varv)=\dot s^\varPhi_{k\smash[t]{h^{-1}}}\bigl(\dot q^\varPhi_{k,h}(0,0,\varv)\emphpunct*,\dot s^\varPhi_h(0,\varv)\bigr)
\end{gather*}
Recalling our shorthand \eqref{eqn:12B.11.11} and the identity \eqref{eqn:12B.11.15} and (for \(\varPhi\) non-de\-gen\-er\-ate) \eqref{eqn:12B.11.16}, we may rewrite these equations as follows.
\begin{align*}
\begin{split}
 \Delta^\varPhi(g,k)\lambda^\varPhi_k\varv
		&=\dot q^\varPhi_\updownarrow(gh^{-1},kh^{-1})\bigl(\dot q^\varPhi_\leftrightarrow(g,h)\varv\emphpunct*,\dot q^\varPhi_\leftrightarrow(k,h)\varv\bigr)
\\		&\justify+\dot q^\varPhi_\leftrightarrow(gh^{-1},kh^{-1})\dot s^\varPhi_\leftrightarrow(h)\varv
\end{split}\\%
\begin{split}
		&=\dot q^\varPhi_\updownarrow(gh^{-1},kh^{-1})\bigl(\Delta^\varPhi(g,h)\lambda^\varPhi_h\varv,\Delta^\varPhi(k,h)\lambda^\varPhi_h\varv\bigr)
\\		&\justify+[\Delta^\varPhi(gh^{-1},kh^{-1})\lambda^\varPhi_{k\smash[t]{h^{-1}}}]\lambda^\varPhi_h\varv
\end{split}
\\ \lambda^\varPhi_k\varv
		&=\dot s^\varPhi_\updownarrow(kh^{-1})\dot q^\varPhi_\leftrightarrow(k,h)\varv+\dot s^\varPhi_\leftrightarrow(kh^{-1})\dot s^\varPhi_\leftrightarrow(h)\varv
\\		&=\dot s^\varPhi_\updownarrow(kh^{-1})\Delta^\varPhi(k,h)\lambda^\varPhi_h\varv+\lambda^\varPhi_{k\smash[t]{h^{-1}}}\lambda^\varPhi_h\varv
\end{align*}
After suppressing \(\varv\) from these equations and setting \(kh^{-1}=:h'\) in the second of them, we are left with the following tautological expressions, which we call ``cocycle equations''.%
\begin{subequations}
\label{eqn:12B.11.18}
\begin{gather}
\label{eqn:12B.11.18a}
\begin{split}
 \dot q^\varPhi_\updownarrow(gh^{-1},kh^{-1})\bigl(\Delta^\varPhi(g,h),\Delta^\varPhi(k,h)\bigr)\lambda^\varPhi_h
		&=\Delta^\varPhi(g,k)\lambda^\varPhi_k
\\		&\justify-\Delta^\varPhi(gh^{-1},kh^{-1})\lambda^\varPhi_{k\smash[t]{h^{-1}}}\lambda^\varPhi_h
\end{split}
\\%
\label{eqn:12B.11.18b}
 \lambda^\varPhi_{h'h}-\lambda^\varPhi_{h'}\lambda^\varPhi_h=\dot s^\varPhi_\updownarrow(h')\Delta^\varPhi(h'h,h)\lambda^\varPhi_h
\end{gather}
\end{subequations}

\subsection*{Proof of Proposition~\ref{prop:12B.11.7}}

We have to confirm the validity of Eq.~\eqref{eqn:12B.11.17} for the (unital) connection \(H:=\avg\varPhi\) relative to the given (non-de\-gen\-er\-ate) ``background'' connection \(\varPhi\). It will be convenient to abridge the expressions \(\lambda^{\avg\varPhi}\) and \(X^{\avg\varPhi:\varPhi}\) into \(\avg\lambda\) and \(\avg X\) respectively; we shall moreover systematically suppress `\(\varPhi\)'~superscripts. For any divisible pair of arrows \((g,h)\), we have%
\begin{align*}
 \dot q_\updownarrow(g,h)(\avg X_g,\avg X_h)
		&=\dot q_\updownarrow(g,h)\circ\integral_{tk=sg=sh}\bigl(\Delta(gk,k),\Delta(hk,k)\bigr)dk
\\		&=\integral\dot q_\updownarrow(g,h)\bigl(\Delta(gk,k),\Delta(hk,k)\bigr)dk
\\	\overset{\txt{[by \eqref{eqn:12B.11.18a}: ]}}{\phantom=}
		&=\integral[\Delta(gk,hk)\lambda_{hk}(\lambda_k)^{-1}-\Delta(g,h)\lambda_h]dk
\\%
\begin{split}
	\overset{\txt{[by \eqref{eqn:12B.11.18b}: ]}}{\phantom=}
		&=\integral\Delta(gk,hk)\circ\bigl(\dot s_\updownarrow(h)\Delta(hk,k)+\lambda_h\bigr)dk
\\		&\justify-\integral\Delta(g,h)\lambda_hdk
\end{split}
\\%
\begin{split}
	\overset{\txt{[by \eqref{eqn:12B.11.13b}, \eqref{eqn:12B.11.15}: ]}}{\phantom=}
		&=\integral\Delta(gk,hk)\circ\bigl(\dot s_\updownarrow(h)\Delta(hk,k)-\dot s_\updownarrow(h)\avg X_h\bigr)dk
\\		&\justify+\integral_{tk=sh}\Delta(gk,hk)\avg\lambda_hdk-\Delta(g,h)\lambda_h
\end{split}
\\%
\begin{split}
	\overset{\txt{[by \eqref{eqn:12B.10.3b}: ]}}{\phantom=}
		&=\integral\Delta(gk,hk)\dot s_\updownarrow(h)\circ\bigl(\Delta(hk,k)-\avg X_h\bigr)dk
\\		&\justify+\integral_{tk=th}\Delta(gh^{-1}k,k)\avg\lambda_hdk-\Delta(g,h)\lambda_h
\end{split}
\\%
\begin{split}
		&=\iintegral\Delta(gk,hk)\dot s_\updownarrow(h)\circ\bigl(\Delta(hk,k)-\Delta(hk',k')\bigr)dkdk'
\\		&\justify+\avg X_{gh^{-1}}\avg\lambda_h-\Delta(g,h)\lambda_h\text.
\end{split}
\end{align*}
Thus far, we have not used the assumption that \(\varPhi\) was effective. Now if that is the case, then by \eqref{eqn:12B.11.18b} the double integral term must vanish. \qed%

\section{Basic recursive estimates}\label{sec:16a.4}

Recursive averaging provides a general method to construct exact solutions of problems which, a~priori, are only known to admit ``approximate'' solutions. In the theory of topological groups, this method has been applied to prove the existence of homomorphisms of compact Lie groups near any given ``almost homomorphism'' \cite{GKR} and, again for compact groups, of representations by bounded Hilbert space operators near any given ``approximate representation'' \cite{delaHK}. The statements to be presented in this section lead to a similar type of result for (continuous) pseudo-rep\-re\-sen\-ta\-tions of proper groupoids. We stress that our computations are by no means a reproduction of arguments from the cited references, compared to which, they appear to be significantly shorter and simpler.

\begin{npar}\label{npar:12B.12.1} Let \(\varGamma\tto M\) be as before. If for any tangent pseudo-rep\-re\-sen\-ta\-tion \(\lambda\) which coincides with the effect of some non-de\-gen\-er\-ate groupoid connection \(\varPhi\) we set
\begin{equation*}
	\Delta^\lambda(h'h,h):=\dot s^\varPhi_\updownarrow(h')\Delta^\varPhi(h'h,h)
\end{equation*}
in the tautological expression \eqref{eqn:12B.11.18b} and then, for \((g,h)\in\varGamma_\div\), make \(h':=gh^{-1}\), we obtain the identity%
\begin{subequations}
\label{eqn:16a.32}
\begin{equation}
\label{eqn:12B.12.2}
	\Delta^\lambda(g,h)=\lambda(g)\lambda(h)^{-1}-\lambda(gh^{-1}).
\end{equation}
Regarding this as our definition of \(\Delta^\lambda\) when \(\lambda\) is an arbitrary invertible pseudo-rep\-re\-sen\-ta\-tion of \(\varGamma\tto M\) on some vector bundle \(E\) over \(M\), and letting \(s_\div,t_\div:\varGamma_\div\to M\) indicate the maps \((g,h)\mapsto th\mathord, \mapsto tg\) [compare \eqref{eqn:12B.11.5b}], we obtain a global cross-sec\-tion
\begin{equation}
\label{eqn:12B.12.3}
	\Delta^\lambda\in\Gamma^\infty\bigl(\varGamma_\div;L(s_\div^*E,t_\div^*E)\bigr).
\end{equation}
\end{subequations} \end{npar}

\begin{stmt}\label{stmt:16a.4.2} The following equations hold:%
\begin{subequations}
\label{eqn:12B.12.5}
\begin{gather}
\label{eqn:12B.12.5b}
\begin{split}
	\avg\lambda(g'g)-\avg\lambda(g')\avg\lambda(g)
		&=\integral_{tk=sg}\Delta^\lambda(g'gk,gk)\Delta^\lambda(gk,k)dk
\\		&\justify-\iintegral_{\substack{tk=sg\\tl=sg}}\Delta^\lambda(g'gk,gk)\Delta^\lambda(gl,l)dkdl
\end{split}
\\%
\label{eqn:12B.12.5a}
	\avg\lambda(g)=\lambda(g)+\integral_{tk=sg}\Delta^\lambda(gk,k)dk
\end{gather}
\end{subequations} \end{stmt}

\begin{proof} The second equation is a straightforward consequence of our Haar integral being normalized\emphpunct: \(\avg\lambda_g=\integral\lambda_{gk}(\lambda_k)^{-1}dk=\integral\lambda_gdk+\integral[\lambda_{gk}(\lambda_k)^{-1}-\lambda_g]*dk=\lambda_g+\integral\Delta^\lambda(gk,k)dk\). As to the first equation, we use both left invariance and normality of the Haar integral:%
\begin{align*}
	\avg\lambda_{g'g}-\avg\lambda_{g'}\avg\lambda_g
		&=\integral\lambda_{g'gk}(\lambda_k)^{-1}dk-\left(\integral\lambda_{g'k'}(\lambda_{k'})^{-1}dk'\right)\circ\left(\integral\lambda_{gk}(\lambda_k)^{-1}dk\right)
\\%
		&=\integral\lambda_{g'gk}(\lambda_k)^{-1}dk-\left(\integral\lambda_{g'gk}(\lambda_{gk})^{-1}dk\right)\circ\left(\integral\lambda_{gl}(\lambda_l)^{-1}dl\right)
\\%
\begin{split}
		&=\integral\lambda_{g'gk}(\lambda_k)^{-1}dk-\integral\lambda_{g'gk}(\lambda_{gk})^{-1}\lambda_gdk
\\		&\justify-\integral\lambda_{g'}\lambda_{gk}(\lambda_k)^{-1}dk+\lambda_{g'}\lambda_g
\\		&\justify-\iintegral\lambda_{g'gk}(\lambda_{gk})^{-1}\lambda_{gl}(\lambda_l)^{-1}dkdl+\integral\lambda_{g'gk}(\lambda_{gk})^{-1}\lambda_gdk
\\		&\justify+\integral\lambda_{g'}\lambda_{gl}(\lambda_l)^{-1}dl-\lambda_{g'}\lambda_g
\end{split}
\\%
\begin{split}
		&=\integral\bigl(\lambda_{g'gk}(\lambda_{gk})^{-1}-\lambda_{g'}\bigr)\circ\bigl(\lambda_{gk}(\lambda_k)^{-1}-\lambda_g\bigr)dk
\\		&\justify-\iintegral\bigl(\lambda_{g'gk}(\lambda_{gk})^{-1}-\lambda_{g'}\bigr)\circ\bigl(\lambda_{gl}(\lambda_l)^{-1}-\lambda_g\bigr)dkdl,
\end{split}
\end{align*}
which happens to be the desired expression. \end{proof}

No matter what the vec\-tor-bun\-dle metric that we put on \(E\), the norms \(\Norm\void_{x,y}\)~\eqref{eqn:16a.11} will satisfy the inequalities
\begin{equation}
\label{eqn:12B.12.6}
	\within\|\mu\circ\lambda\|_{x,z}\leq\within\|\mu\|_{y,z}\within\|\lambda\|_{x,y};
\end{equation}
in particular, \(\End(E_x)\) will be a (unital) Banach algebra under the norm \(\Norm\void_{x,x}\).

\begin{lem}\label{lem:12B.12.3} Let\/ \(A\) be a Banach algebra, with unit element\/ \(e\). Let a real constant\/ \(0\leq c<1\) be given. For every element\/ \(\varv\) of\/ \(A\) such that\/ \(\norm\varv\leq c\), the element\/ \(e-\varv\) is invertible and
\begin{equation*}
	\within*|(e-\varv)^{-1}-e|\leq c(1-c)^{-1}.
\end{equation*} \end{lem}

\begin{proof} Since \(\norm\varv<1\), the element \(e-\varv\) is invertible, with inverse
\begin{equation*}
	(e-\varv)^{-1}=e+\varv+\varv^2+\varv^3+\dotsb,
\end{equation*}
so that \(\within*|(e-\varv)^{-1}-e|\leq\norm\varv+\norm\varv^2+\norm\varv^3+\dotsb=\norm\varv(1-\norm\varv)^{-1}\leq c(1-c)^{-1}\). \end{proof}

\begin{stmt}\label{npar:12B.12.5} Let\/ \(\lambda\in\Psr_1(\varGamma;E)\) be a unital pseudo-rep\-re\-sen\-ta\-tion of\/ \(\varGamma\tto M\) on\/ \(E\). Suppose that\/ \(c(\lambda)<1\)~\eqref{eqn:12B.12.8b}. Then\/ \(\lambda\) is invertible. \end{stmt}

\begin{proof} The assumptions entail that \(1>\within*\|\id-\lambda_{\smash[t]{g^{-1}}}\circ\lambda_g\|_{sg,sg}\) for any \(g\). Since \(\End(E_{sg})\) (with the norm \(\Norm\void_{sg,sg}\)) is a (unital) Banach algebra, \(\lambda_{\smash[t]{g^{-1}}}\circ\lambda_g\) must be an invertible element of \(\End(E_{sg})\) and therefore \(\lambda_g\) must be a left invertible (hence injective) linear map. Similarly \(\lambda_g\) must be right invertible (hence surjective). \end{proof}

\begin{stmt}\label{npar:12B.12.6} The estimates below hold for every unital pseudo-rep\-re\-sen\-ta\-tion\/ \(\lambda\in\Psr_1(\varGamma;E)\) that satisfies the condition\/ \(c(\lambda)<1\) [cf.~\eqref{eqn:12B.12.8}]:%
\begin{subequations}
\label{eqn:12B.12.9}
\begin{gather}
\label{eqn:12B.12.9a}
	\within*\|\lambda(g)^{-1}\|_{tg,sg}\leq\frac{b(\lambda)}{1-c(\lambda)}
\\%
\label{eqn:12B.12.9b}
	\within*\|\Delta^\lambda(g,h)\|_{th,tg}\leq\left(\frac{b(\lambda)}{1-c(\lambda)}\right)c(\lambda)
\end{gather}
\end{subequations} \end{stmt}

\begin{proof} To prove the first inequality, \eqref{eqn:12B.12.9a}, let us apply Lemma~\ref{lem:12B.12.3} to the Banach algebra \(A:=\End(E_{sg})\) (with norm \(\Norm\void_{sg,sg}\)), the number \(c:=c(\lambda)<1\), and the Banach algebra element \(\varv:=\id-\lambda_{\smash[t]{g^{-1}}}\circ\lambda_g\). We get (omitting norm subscripts)
\begin{equation*}
	\within*\|{\lambda_g}^{-1}(\lambda_{\smash[t]{g^{-1}}})^{-1}-\id\|=\within*\|\mkern 1mu[\id-(\id-\lambda_{\smash[t]{g^{-1}}}\circ\lambda_g)]^{-1}-\id\|\leq c(1-c)^{-1}.
\end{equation*}
Using the inequalities \eqref{eqn:12B.12.6}, we obtain
\begin{equation*}
	\within*\|{\lambda_g}^{-1}-\lambda_{\smash[t]{g^{-1}}}\|=\within*\|\mkern 1mu[{\lambda_g}^{-1}(\lambda_{\smash[t]{g^{-1}}})^{-1}-\id]\circ\lambda_{\smash[t]{g^{-1}}}\|\leq c(1-c)^{-1}\within*\|\lambda_{\smash[t]{g^{-1}}}\|
\end{equation*}
whence
\begin{equation*}
	\within*\|{\lambda_g}^{-1}\|\leq\within*\|\lambda_{\smash[t]{g^{-1}}}\|+\within*\|{\lambda_g}^{-1}-\lambda_{\smash[t]{g^{-1}}}\|\leq\left(1+\frac c{1-c}\right)\within*\|\lambda_{\smash[t]{g^{-1}}}\|=\frac{\within*\|\lambda_{\smash[t]{g^{-1}}}\|}{1-c}.
\end{equation*}
As to \eqref{eqn:12B.12.9b}, it is an immediate consequence of \eqref{eqn:12B.12.9a}:
\begin{equation*}
	\within*\|\lambda_g(\lambda_h)^{-1}-\lambda_{g\smash[t]{h^{-1}}}\|=\within*\|(\lambda_g-\lambda_{g\smash[t]{h^{-1}}}\lambda_h)\circ{\lambda_h}^{-1}\|\leq c(\lambda)\within*\|{\lambda_h}^{-1}\|.	\qedhere%
\end{equation*} \end{proof}

The estimates \eqref{eqn:12B.12.10} drop out now as a corollary of the identities \eqref{eqn:12B.12.5} and of the preceding inequalities \eqref{eqn:12B.12.9}; indeed the Haar integrals involved in \eqref{eqn:12B.12.5} are normalized so one can estimate each one of them simply by the sup~norm of the integrand.

\subsection*{Proof of Lemma~\ref{lem:12B.12.8}}

We claim that if the inequality \eqref{eqn:12B.12.12a} holds for every integer \(i\) between zero and a certain non-neg\-a\-tive value of \(n\leq l\) then the inequality \eqref{eqn:12B.12.12b} must also hold for \(i=n\). Indeed, suppose that \(c_i\leq\varepsilon^{2^i}/(6b_0^2)\) for \(i=0,\dotsc,n\) and\textemdash only provisionally\textemdash that \(n>0\). Under such assumptions, we must have \(c_i<1\) for \(i=0,\dotsc,n-1\) (because by hypothesis \(b_0\geq1\) and \(\varepsilon<1\)), and consequently, by \eqref{eqn:12B.12.11}, \(b_{i+1}\leq b_i/(1-c_i)\). Combining all these inequalities recursively as \(i\) runs from zero to \(n-1\), we conclude that \(b_n\leq b_0\big/(1-c_0)\dotsb(1-c_{n-1})\). Hence
\begin{equation}
\label{eqn:12B.12.13}
	b_n/(1-c_n)\leq b_0\big/(1-c_0)\dotsb(1-c_n).
\end{equation}
This inequality is true also when \(n=0\) (trivially). We proceed to study the quantity
\begin{equation*}
\textstyle%
	1\big/\prod_{i=0}^n(1-c_i)=\bigl(\exp\log\prod_{i=0}^n(1-c_i)\bigr)^{-1}=\exp\bigl(-\sum_{i=0}^n\log(1-c_i)\bigr).
\end{equation*}
For every real number \(x\) such that \(\abs x<1\) we have \(\displaystyle-\abs x+\abs{\log(1+x)}\leq\abs{x-\log(1+x)}=\biggl\lvert\frac{x^2}2-\frac{x^3}3+\frac{x^4}4-\dotsb\biggr\rvert\leq\frac{\abs x^2}2+\frac{\abs x^3}2+\frac{\abs x^4}2+\dotsb=\frac{\abs x^2}2\frac 1{1-\abs x}\). This quantity is~\(\leq\abs x^2\) whenever \(\abs x\) is~\(\leq 1/2\). Hence, substituting \(x\) with \(-x\), we see that
\begin{equation}
\label{eqn:12B.12.14}
	0\leq x\leq\frac 12\seq-\log(1-x)\leq x+x^2.
\end{equation}
Since \(2^i\geq2i\) for every integer \(i\geq0\), and since by hypothesis \(b_0\geq1\) and \(\varepsilon\leq2/3<1\), for every integer \(0\leq i\leq n\) it must be true that \(c_i\leq\varepsilon^{2^i}/(6b_0^2)\leq\varepsilon^{2i}/6\), in particular, that \(c_i<1/2\), whence by \eqref{eqn:12B.12.14},
\begin{align*}
	\exp\Biggl(\textstyle\sum\limits_{i=0}^n-\log(1-c_i)\Biggr)
		&\leq\exp\within(\textstyle\sum\limits_{i=0}^nc_i+c_i^2)
\\		&\leq\exp\within(\frac 16\textstyle\sum\limits_{i=0}^n\varepsilon^{2i})\exp\within(\frac 1{6^2}\textstyle\sum\limits_{i=0}^n\varepsilon^{4i})
\\		&\leq\exp\within(\frac 16\frac 1{1-\varepsilon^2})\exp\within(\frac 1{6^2}\frac 1{1-\varepsilon^4})
\\		&\leq\exp\within(\frac 16\frac 95+\frac 1{6^2}\frac{81}{65})
\\		&\leq\exp\within(\frac 12\within[\frac 35+\frac 15\frac 12\frac 9{13}])
\\		&\leq\exp(1/2)\leq\sqrt 3.
\end{align*}
Combining this with \eqref{eqn:12B.12.13}, we obtain the desired inequality: \(b_n/(1-c_n)\leq\sqrt 3b_0\).

To finish the proof of the lemma, we are going to show that the inequality \eqref{eqn:12B.12.12a} holds for every integer \(i\) between zero and \(n\) by reasoning inductively on \(n\). Our claim is valid by hypothesis for \(n=0\). Assume that the claim holds for a certain value~\(\geq0\mathord,\leq l-1\) of \(n\). Then, by the above, \(b_n/(1-c_n)\leq\sqrt 3b_0\), whence by \eqref{eqn:12B.12.11}
\begin{flalign*}
&&	c_{n+1}\leq 2\within(\frac{b_n}{1-c_n})^2c_n^2\leq 2\cdot 3b_0^2\frac{\within*(\varepsilon^{2^n})^2}{\within*(6b_0^2)^2}=\frac{\varepsilon^{2^{n+1}}}{6b_0^2}.
&&	\hbox{\qedsymbol}
\end{flalign*}

\section{Fast convergence theorem~A (pseudo-rep\-re\-sen\-ta\-tions)}\label{sec:16a.5}

The present section is devoted in its entirety to the proof of Theorem~\ref{thm:12B.14.2}. Recall that we are assigned a proper Lie groupoid \(\varGamma\tto M\) endowed with some specific normalized Haar system, say, \(\nu\), a vector bundle \(E\) over \(M\), and a \emph{near representation} (cf.~Definition~\ref{defn:12B.14.1}) \(\lambda\in\Psr_1(\varGamma;E)\). These data shall be kept fixed throughout the section.

To begin with, we observe that \(\lambda\) is necessarily {\em invertible}. Indeed, by our near multiplicativity condition \eqref{eqn:12B.14.1}, since \(b(\lambda\mathbin|U)\geq 1\) in consequence of the unitality of \(\lambda\), we must have \(c(\lambda\mathbin|U)\leq 1/9<1\) so our earlier remark \ref{npar:12B.12.5} implies that \(\lambda\mathbin|U\) is invertible.

In virtue of invertibility, it makes sense to consider the (unital) pseudo-rep\-re\-sen\-ta\-tion \(\avg\lambda\in\Psr_1(\varGamma;E)\) that one obtains by averaging \(\lambda\) against \(\nu\) by means of the formula \eqref{eqn:12B.12.4}. We contend that the pseudo-rep\-re\-sen\-ta\-tion arising in this way is itself nearly multiplicative. Indeed, let \(U\subset M\) be any invariant open subset satisfying \eqref{eqn:12B.14.1} for some choice of metrics on \(E\mathbin|U\). Let us write \(b\), \(c\), \(\avg b\) and \(\avg c\) respectively as short for \(b(\lambda\mathbin|U)\), \(c(\lambda\mathbin|U)\), \(b(\avg\lambda\mathbin|U)\) and \(c(\avg\lambda\mathbin|U)\). By unitality, \(b\geq 1\), so by \eqref{eqn:12B.14.1}, \(c\leq 1/9<1\). Then by \eqref{eqn:12B.12.10a}, \(\avg b\leq b/(1-c)\leq\frac 98b\), so \(\avg b^{-2}\geq\bigl(\frac 89\bigr)^2b^{-2}\geq\frac 12b^{-2}\). Also, by \eqref{eqn:12B.12.10b} and \eqref{eqn:12B.14.1}, \(\avg c\leq 2b^2(1-c)^{-2}c^2\leq 2b^2\bigl(1-\frac 19\bigr)^{-2}\bigl(\frac 19\bigr)^2b^{-4}=2\bigl(\frac 98\bigr)^2\bigl(\frac 19\bigr)^2b^{-2}\leq\frac 19\frac 12b^{-2}\). Hence \(\avg c\leq\frac 19\avg b^{-2}\).

From the above remarks it follows that \(\lambda\) gives rise to a sequence \(\within*{\avg[i]\lambda}_{i=0}^\infty\) of \emph{averaging iterates} \(\avg[i]\lambda\in\Psr_1(\varGamma;E)\), constructed recursively by setting \(\avg[0]\lambda:=\lambda\emphpunct, \avg[i+1]\lambda:=\avg(\avg[i]\lambda)\). Making use of our computations from the previous section, we proceed to show that as a sequence of smooth cross-sec\-tions of the vector bundle \(L(s^*E,t^*E)\) this is Cauchy within \(\Gamma^0\bigl(\varGamma;L(s^*E,t^*E)\bigr)\), the Fr\'e\-chet space of cross-sec\-tions of class \(C^0\) of \(L(s^*E,t^*E)\), and thus converges within the same space to a (unique) pseudo-rep\-re\-sen\-ta\-tion, say, \(\avg[\infty]\lambda\), which a priori will only be of class \(C^0\) (i.e.~continuous).

The idea is to apply Lemma~\ref{cor:12B.13.5} from Appendix~\ref{sec:16a.A} to the open cover of \(\varGamma\) consisting of all those relatively compact open subsets \(\varOmega\subset\varGamma\) for which invariant open subsets \(U\subset M\) which satisfy \eqref{eqn:12B.14.1} for some choice of metrics on \(E\mathbin|U\) exist so that \(\overline\varOmega\subset s^{-1}(U)\cap t^{-1}(U)\). We contend that for any such \(\varOmega\) the sequence of restrictions \(\{\res^\varGamma_{\overline\varOmega}(\avg[i]\lambda)\}\) is Cauchy within \(\Gamma^0\bigl(\overline\varOmega;L(s^*E,t^*E)\bigr)\) relative to the \emph{\(C^0\)\mdash norm topology} (Appendix~\ref{sec:16a.A}). Given \(\varOmega\), let us fix \(U\) together with a metric on \(E\mathbin|U\) as indicated. The \(C^0\)\mdash norm topology on \(\Gamma^0\bigl(\overline\varOmega;L(s^*E,t^*E)\bigr)\) is then generated by the following norm\textemdash recall our notations \eqref{eqn:16a.11}:
\begin{equation}
\label{eqn:16a.38}
	p_\varOmega^{(0)}(\zeta):=\sup_{g\in\varOmega}\within*\|\zeta(g)\|_{sg,tg}.
\end{equation}
Since \(U\) is {\em invariant}, upon restriction, the selected normalized Haar system \(\nu\) on \(\varGamma\tto M\) induces a similar system, say, \(\nu\mathbin|U\), on \(\varGamma\mathbin|U\tto U\), relative to which the following equation holds for all invertible pseudo-rep\-re\-sen\-ta\-tions \(\zeta\in\Psr_\div(\varGamma;E)\).%
\begin{equation}
\label{eqn:16a.39}
	\avg\zeta\mathbin|U=\mathinner{\avg(\zeta\mathbin|U)}
\end{equation}
We may hence suppose that \(U=M\), without loss of generality. Now, either \(b(\lambda)=\infty\), in which case \(c(\lambda)=0\) and \(\lambda\) is \emph{multiplicative} (a representation) so \(\avg[i]\lambda=\lambda\) for all \(i\) and our sequence is constant, or \(1\leq b(\lambda)<\infty\). In the latter case, by our estimates \eqref{eqn:12B.12.10} and by the near multiplicativity condition \eqref{eqn:12B.14.1}, the two sequences of non-neg\-a\-tive real numbers \(b_i:=b(\avg[i]\lambda)\emphpunct, c_i:=c(\avg[i]\lambda)\) must satisfy the hypotheses of Lemma~\ref{lem:12B.12.8}. Then, by the identity \eqref{eqn:12B.12.5a} and by the estimates \eqref{eqn:12B.12.9b} and \eqref{eqn:12B.12.12}, for some number \(0\leq\varepsilon\leq 2/3\) we must have
\begin{align*}
 p_\varOmega^{(0)}\bigl(\res^\varGamma_{\overline\varOmega}(\avg[i+1]\lambda)-\res^\varGamma_{\overline\varOmega}(\avg[i]\lambda)\bigr)
	&\leq\left(\frac{b(\avg[i]\lambda)}{1-c(\avg[i]\lambda)}\right)c(\avg[i]\lambda)
\\	&\leq\sqrt 3/6\cdot b(\lambda)^{-1}\cdot\varepsilon^{2^i}\leq\varepsilon^{2^i}
\end{align*}
for all \(i\). This clearly implies that our sequence is Cauchy.

Next, let us confirm that \(\avg[\infty]\lambda\) is a {\em representation}. Since \(C^0\)\mdash convergence implies pointwise convergence, for every arrow \(g\) the sequence of linear maps \(\{\avg[i]\lambda(g)\}\) converges to \(\avg[\infty]\lambda(g)\) in the fi\-nite-di\-men\-sion\-al vector space \(L(E_{sg},E_{tg})\). Hence \(\avg[\infty]\lambda_{1x}=\lim\avg[i]\lambda_{1x}=\id\) for all \(x\) in \(M\), since every \(\avg[i]\lambda\) is a unital pseudo-rep\-re\-sen\-ta\-tion. Further, given any composable pair of arrows \((g',g)\), there exists \(U\) as before such that \(g',g\in s^{-1}(U)\cap t^{-1}(U)\); for any metric on \(E\mathbin|U\), we then have the following inequality (omitting norm subscripts),%
\begin{align*}
 \within*\|\avg[\infty]\lambda_{g'g}-\avg[\infty]\lambda_{g'}\avg[\infty]\lambda_g\|
	&\leq\within*\|\avg[\infty]\lambda_{g'g}-\avg[i]\lambda_{g'g}\|+\within*\|\avg[i]\lambda_{g'g}-\avg[i]\lambda_{g'}\avg[i]\lambda_g\|%
\\	&\justify+\within*\|\avg[i]\lambda_{g'}-\avg[\infty]\lambda_{g'}\|\mathinner{\within*\|\avg[i]\lambda_g\|}+\mathinner{\within*\|\avg[\infty]\lambda_{g'}\|}\within*\|\avg[i]\lambda_g-\avg[\infty]\lambda_g\|
\end{align*}
whose right-hand side becomes arbitrarily small as \(i\) grows sufficiently large, in view of considerations from the preceding paragraph.

There remains to be seen whether \(\avg[\infty]\lambda\) is actually a {\em smooth}\/ cross-sec\-tion of the vector bundle \(L(s^*E,t^*E)\). It will be enough to show that the sequence of averaging iterates \(\{\avg[i]\lambda\}\) is Cauchy within the Fr\'e\-chet space \(\Gamma^\infty\bigl(\varGamma;L(s^*E,t^*E)\bigr)\). The idea is the same as before: apply Lemma~\ref{cor:12B.13.5} to a suitable cover of \(\varGamma\) by relatively compact open subsets. This time, however, some extra care will be needed in the choice of the cover.

\subsection*{Sketch of the argument for uniform convergence up to order $r$}

We are going to define a suitable open cover of \(\varGamma\) consisting of relatively compact open subsets \(\varOmega\subset\varGamma\) selected from among those considered previously. For each \(\varOmega\) in the cover and for each non-neg\-a\-tive integer \(r=0,1,2,\dotsc\) we are going to define suitable ``standard\/ \(C^r\)\mdash norms'' (cf.~Appendix~\ref{sec:16a.A})%
\begin{subequations}
\label{eqn:12B.14.3}
\begin{align}
	p^{(r)}_\varOmega
\quad	&\text{on}\quad	\Gamma^r\bigl(\overline\varOmega;L(s^*E,t^*E)\bigr),
\label{eqn:12B.14.3a}
\\	\bar p^{(r)}_\varOmega
\quad	&\text{on}\quad	\Gamma^r\bigl(\overline\varOmega;L(t^*E,s^*E)\bigr),
\label{eqn:12B.14.3b}
\\	q^{(r)}_\varOmega
\quad	&\text{on}\quad	\Gamma^r\bigl(\overline\varOmega_2;L(s_2^*E,t_2^*E)\bigr),
\label{eqn:12B.14.3c}
\end{align}
\end{subequations}
where \(\varOmega_2:=\varOmega\ftimes st\varOmega\) and \(s_2,t_2:\varGamma_2:=\varGamma\ftimes st\varGamma\to M\) denote the two maps \((g',g)\mapsto sg\mathord, \mapsto tg'\), so that \(p^{(r)}_\varOmega\leq p^{(r+1)}_\varOmega\), etc.~for all \(r\). For \(r=0\), the norm \eqref{eqn:12B.14.3a} is going to be defined by \eqref{eqn:16a.38}, the other two are going to be defined analogously. Then, letting \(m,\pr_1,\pr_2:\varGamma_2\to\varGamma\) respectively stand for law of arrow composition, first, and second projection, henceforth regarding \(\varOmega\) as fixed, for any (where appropriate, invertible) pseudo-rep\-re\-sen\-ta\-tion \(\zeta\in\Gamma^\infty\bigl(\varGamma;L(s^*E,t^*E)\bigr)\) we set%
\begin{align*}
        b^{(r)}(\zeta)&:=p^{(r)}_\varOmega\bigl(\res^\varGamma_{\overline\varOmega}(\zeta)\bigr),
\\ \bar b^{(r)}(\zeta)&:=\bar p^{(r)}_\varOmega\bigl(\res^\varGamma_{\overline\varOmega}(\zeta)^{-1}\bigr),
\\      c^{(r)}(\zeta)&:=q^{(r)}_\varOmega\bigl(\res^{\varGamma_2}_{\overline\varOmega_2}(m^*\zeta-\pr_1^*\zeta\circ\pr_2^*\zeta)\bigr)
\end{align*}
(by abuse of notation, the latter expression indicates the difference between the following two composite vector bundle morphisms,%
\begin{gather*}
	s_2^*E\cong\pr_2^*s^*E\cong m^*s^*E\xto{m^*\zeta}m^*t^*E\cong\pr_1^*t^*E\cong t_2^*E\makebox[.pt][l]{,}
\\	s_2^*E\cong\pr_2^*s^*E\xto{\pr_2^*\zeta}\pr_2^*t^*E\cong\pr_1^*s^*E\xto{\pr_1^*\zeta}\pr_1^*t^*E\cong t_2^*E\makebox[.pt][l]{).}
\end{gather*}

Our definitions will be such that the three inequalities below hold for every invertible pseudo-rep\-re\-sen\-ta\-tion \(\zeta\in\Gamma^\infty\bigl(\varGamma;\Lis(s^*E,t^*E)\bigr)\) which satisfies the condition \(c^{(0)}(\zeta)<1\);%
\begin{subequations}
\label{eqn:16a.41}
\begin{gather}
	\bar b^{(0)}(\zeta)\leq b^{(0)}(\zeta)\big/\bigl(1-c^{(0)}(\zeta)\bigr)
\label{eqn:12B.14.17}
\\	b^{(0)}(\avg\zeta-\zeta)\leq\mathinner{\bigl[b^{(0)}(\zeta)\big/\bigl(1-c^{(0)}(\zeta)\bigr)\bigr]}c^{(0)}(\zeta)
\label{eqn:16a.41b}
\\	c^{(0)}(\avg\zeta)\leq 2\mathinner{\bigl[b^{(0)}(\zeta)\big/\bigl(1-c^{(0)}(\zeta)\bigr)\bigr]}^2\bigl(c^{(0)}(\zeta)\bigr)^2
\label{eqn:16a.41c}
\end{gather}
\end{subequations}
moreover, there will exist positive constants \(A^{(r)}\), \(B^{(r)}\) and \(C^{(r)}\) such that the further three inequalities below hold for all \(\zeta\in\Gamma^\infty\bigl(\varGamma;\Lis(s^*E,t^*E)\bigr)\).%
\begin{subequations}
\label{eqn:16a.42}
\begin{gather}
	\bar b^{(r+1)}(\zeta)\leq\bigl(\bar b^{(r)}(\zeta)\bigr)^2b^{(r+1)}(\zeta)A^{(r)}
\label{eqn:16a.42a}
\\	b^{(r+1)}(\avg\zeta-\zeta)\leq\mathinner{[\bar b^{(r)}(\zeta)b^{(r+1)}(\zeta)c^{(r)}(\zeta)+c^{(r+1)}(\zeta)]}\bar b^{(r)}(\zeta)B^{(r)}
\label{eqn:12B.14.10}
\\	c^{(r+1)}(\avg\zeta)\leq\mathinner{[\bar b^{(r)}(\zeta)b^{(r+1)}(\zeta)c^{(r)}(\zeta)+c^{(r+1)}(\zeta)]}\bigl(\bar b^{(r)}(\zeta)\bigr)^2c^{(r)}(\zeta)C^{(r)}
\label{eqn:12B.14.15}
\end{gather}
\end{subequations}

Back to our near representation \(\lambda\), now, let us introduce the following abbreviations for every order of iteration \(i=0,1,2,\dotsc\colon\)%
\begin{align*}
        b^{(r)}_i&:=b^{(r)}(\avg[i]\lambda),
\\ \bar b^{(r)}_i&:=\bar b^{(r)}(\avg[i]\lambda),
\\      c^{(r)}_i&:=c^{(r)}(\avg[i]\lambda).
\end{align*}
Arguing by induction on \(r\) on the basis of the inequalities \eqref{eqn:16a.42}, we are going to show that for some number \(0\leq\varepsilon<1\) (independent of \(i\) and \(r\)) the four statements S1--S4 hereafter must be true for all \(r\).
\begin{description}
\itemsep=0pt%
 \item[S1.]\em The sequence\/ \(\bigl\{b^{(r)}_i\bigr\}\) is bounded.\em
 \item[S2.]\em The sequence\/ \(\bigl\{\bar b^{(r)}_i\bigr\}\) is bounded.\em
 \item[S3.]\em There exists some positive constant\/ \(R^{(r)}\) such that\/ \(c^{(r)}_{i+1}\leq\bigl(c^{(r)}_i\bigr)^2R^{(r)}\) for every\/ \(i\).\em
 \item[S4.]\em There exists some non-neg\-a\-tive integer\/ \(i^{(r)}\) such that\/ \(c^{(r)}_i\leq\varepsilon^{2^{i-i^{(r)}}}\) for all\/ \(i\geq i^{(r)}\).\em
\end{description}
For \(r=0\), we can already verify the truth of these statements. Indeed, the two inequalities \eqref{eqn:16a.41b} and \eqref{eqn:16a.41c} imply at once that the two sequences of non-neg\-a\-tive real numbers \(b_i:=b^{(0)}_i\emphpunct, c_i:=c^{(0)}_i\) satisfy the hypothesis \eqref{eqn:12B.12.11} of Lemma~\ref{lem:12B.12.8}; in addition, \(6\bigl(b^{(0)}_0\bigr)^2c^{(0)}_0\leq\varepsilon:=6\mathinner{b(\lambda\mathbin|U)}^2c(\lambda\mathbin|U)\leq 2/3\) by \eqref{eqn:12B.14.1}, and \(b^{(0)}_0\geq 1\) by unitality, so all the hypotheses of that lemma are satisfied. The same lemma then entails that \(c^{(0)}_i<1\) for all \(i\), and\emphpunct: (S1)~\(b^{(0)}_{i+1}\leq b^{(0)}_i/(1-c^{(0)}_i)\leq\sqrt 3b^{(0)}_0\), by \eqref{eqn:16a.41b} and \eqref{eqn:12B.12.12b}\emphpunct; (S2)~\(\bar b^{(0)}_i\leq b^{(0)}_i/(1-c^{(0)}_i)\leq\sqrt 3b^{(0)}_0\), by \eqref{eqn:12B.14.17} and \eqref{eqn:12B.12.12b}\emphpunct; (S3)~\(c^{(0)}_{i+1}\leq 2\bigl[b^{(0)}_i\big/\bigl(1-c^{(0)}_i\bigr)\bigr]^2\bigl(c^{(0)}_i\bigr)^2\leq\bigl(c^{(0)}_i\bigr)^2 6\bigl(b^{(0)}_0\bigr)^2\), by \eqref{eqn:16a.41c} and \eqref{eqn:12B.12.12b}\emphpunct; (S4)~\(c^{(0)}_i\leq\frac 16\bigl(b^{(0)}_0\bigr)^2\varepsilon^{2^i}<\varepsilon^{2^i}\), by \eqref{eqn:12B.12.12a} because \(b^{(0)}_0\geq 1\).

Now, for an arbitrary order of derivation \(r\), the validity of the above statements S1~to S4 enables one to conclude that the sequence of restrictions \(\{\res^\varGamma_{\overline\varOmega}(\avg[i]\lambda)\}\) is Cauchy within \(\Gamma^r\bigl(\overline\varOmega;L(s^*E,t^*E)\bigr)\) relative to the \emph{\(C^r\)\mdash norm topology} (compare Appendix~\ref{sec:16a.A}). (From this, the \(C^\infty\)\mdash convergence of the sequence \(\{\avg[i]\lambda\}\) drops out immediately, as before by virtue of the lemma~\ref{cor:12B.13.5}.) Indeed, on the basis of the identity \eqref{eqn:12B.12.5a} and of the inequality \eqref{eqn:12B.14.10}, for every order of iteration \(i\) we have the following estimate,
\begin{align*}
 p^{(r)}_\varOmega\bigl(\res^\varGamma_{\overline\varOmega}(\avg[i+1]\lambda)-\res^\varGamma_{\overline\varOmega}(\avg[i]\lambda)\bigr)
	&\leq p^{(r+1)}_\varOmega\bigl(\res^\varGamma_{\overline\varOmega}(\avg[i+1]\lambda)-\res^\varGamma_{\overline\varOmega}(\avg[i]\lambda)\bigr)
\\	&\leq\mathinner{(\bar b^{(r)}_ib^{(r+1)}_ic^{(r)}_i+c^{(r+1)}_i)}\bar b^{(r)}_iB^{(r)}
\\	&\leq\sup\mathinner{\bigl\{(\bar b^{(r)}_ib^{(r+1)}_i+1)\bar b^{(r)}_i\bigr\}}B^{(r)} c^{(r+1)}_i
\end{align*}
in the derivation of which we use the fact that \(c^{(r)}_i\leq c^{(r+1)}_i\) (a consequence of the inequality \(p^{(r)}_\varOmega\leq p^{(r+1)}_\varOmega\)).

\subsection*{Inductive step (from $r$ to $r\mathinner+1$)}

We begin to fill in the details of our argument by showing that, once the inequalities \eqref{eqn:16a.42} are taken for granted, the validity for any given value of \(r\) of the statements S1--S4 implies their validity also for the next higher value of \(r\). By \eqref{eqn:16a.42}, there must be positive constants \(B^{(r)}\)~and \(C^{(r)}\) (independent of \(i\)) such that the two inequalities below hold for all \(i\).
\begin{align*}
	b^{(r+1)}_{i+1}&\leq b^{(r+1)}_i+\mathinner{(\bar b^{(r)}_ib^{(r+1)}_ic^{(r)}_i+c^{(r+1)}_i)}\bar b^{(r)}_iB^{(r)}
\\	c^{(r+1)}_{i+1}&\leq\mathinner{(\bar b^{(r)}_ib^{(r+1)}_ic^{(r)}_i+c^{(r+1)}_i)}\bigl(\bar b^{(r)}_i\bigr)^2c^{(r)}_iC^{(r)}
\end{align*}
Our inductive hypothesis S2 then entails the existence of a positive constant \(L^{(r)}\) such that the following two inequalities are satisfied for all \(i\).
\begin{align*}
	b^{(r+1)}_{i+1}&\leq b^{(r+1)}_i+\mathinner{(b^{(r+1)}_ic^{(r)}_i+c^{(r+1)}_i)}L^{(r)}
\\	c^{(r+1)}_{i+1}&\leq\mathinner{(b^{(r+1)}_ic^{(r)}_i+c^{(r+1)}_i)}c^{(r)}_iL^{(r)}
\end{align*}
In order to complete the inductive step, we need to ``solve'' this recursive system.

\begin{lem}\label{lem:12B.14.4} Let\/ \(\{c_0,c_1,c_2,\dotsc\}\), \(\{b'_0,b'_1,b'_2,\dotsc\}\), and\/ \(\{c'_0,c'_1,c'_2,\dotsc\}\) be sequences of non-neg\-a\-tive real numbers. Let\/ \(\varepsilon,L,R\) be positive real constants, with\/ \(\varepsilon<1\). Suppose that%
\begin{subequations}
\label{eqn:12B.14.20}
\begin{gather}
	b'_{i+1}\leq b'_i+\mathinner{(b'_ic_i+c'_i)}L,
\label{eqn:12B.14.20a}
\\	c'_{i+1}\leq\mathinner{(b'_ic_i+c'_i)}c_iL,
\label{eqn:12B.14.20b}
\\	c_{i+1}\leq c_i^2R,
\label{eqn:12B.14.20c}
\end{gather}
\end{subequations}
and that\/ \(c_i\leq c'_i\) for all\/ \(i\). Further, assume there exists\/ \(I\) for which\/ \(c_i\leq\varepsilon^{2^{i-I}}\) when\/ \(i\geq I\). Then, the following three statements hold\emphpunct: {\upshape(a)}~The sequence\/ \(\{b'_i\}\) is bounded. {\upshape(b)}~There exists some constant\/ \(R'>0\) such that\/ \(c'_{i+1}\leq(c'_i)^2R'\) for all\/ \(i\). {\upshape(c)}~There exists\/ \(I'\geq I\) such that\/ \(c'_i\leq\varepsilon^{2^{i-I'}}\) for\/ \(i\geq I'\). \end{lem}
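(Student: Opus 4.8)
The plan is to establish the three conclusions in the order (a), (b), (c), since both (b) and (c) lean on the boundedness of $\{b'_i\}$ proved in (a). The only real difficulty is (a): the recursions \eqref{eqn:12B.14.20a} and \eqref{eqn:12B.14.20b} couple $b'_i$ and $c'_i$, so neither can be controlled in isolation, and the naive estimate $b'_i\le b'_0\prod_{j<i}(1+Lc_j)$ stalls because its inhomogeneous term carries $c'_i$, which feeds back into $b'_i$. The device I would introduce to break this coupling is the single auxiliary quantity $u_i:=b'_ic_i+c'_i$, which dominates both $c'_i$ and $b'_ic_i$ and, crucially, satisfies a \emph{self-contained} recursion. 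This is the heart of the argument; everything after it is routine.

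To obtain that recursion I would combine \eqref{eqn:12B.14.20a} with \eqref{eqn:12B.14.20c}, writing $b'_{i+1}c_{i+1}\le(b'_i+Lu_i)Rc_i^2=Rc_i(b'_ic_i)+LRc_i^2u_i\le Rc_iu_i(1+Lc_i)$, where the last step uses $b'_ic_i\le u_i$; adding \eqref{eqn:12B.14.20b} in the form $c'_{i+1}\le Lu_ic_i$ then gives $u_{i+1}\le c_iu_i\bigl[R(1+Lc_i)+L\bigr]$. Since $c_i\le\epsilon<1$ for $i\ge I$, this yields the clean bound
\[
 u_{i+1}\le K\,c_i\,u_i\quad(i\ge I),\qquad K:=R(1+L)+L.
\]
Iterating, $u_i\le u_I\prod_{j=I}^{i-1}(Kc_j)$ for $i\ge I$; because $c_j$ decays doubly exponentially, one has $Kc_j\le\tfrac12$ past some index, so the partial products decay geometrically and $\sum_i u_i<\infty$. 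Feeding this into $b'_{i+1}\le b'_i+Lu_i$ gives $b'_i\le b'_0+L\sum_j u_j<\infty$, which is exactly (a); I will write $B$ for the resulting bound.

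For (b), with $b'_i\le B$ in hand I would invoke the hypothesis $c_i\le c'_i$ twice inside \eqref{eqn:12B.14.20b}: once to estimate the factor $b'_ic_i+c'_i\le(B+1)c'_i$, and once on the trailing $c_i\le c'_i$. This collapses \eqref{eqn:12B.14.20b} to $c'_{i+1}\le L(B+1)(c'_i)^2$, so $R':=\max\{L(B+1),1\}$ works (taking $R'\ge1$ costs nothing and is convenient below). For (c), note $c'_i\le u_i\to0$, so I can choose an index $I'\ge I$ with $R'c'_{I'}\le\epsilon$; setting $d_i:=R'c'_i$ turns the estimate of (b) into $d_{i+1}\le d_i^2$, a Newton-type doubly-exponential contraction, whence $d_{I'+k}\le\epsilon^{2^k}$ and therefore $c'_{I'+k}=d_{I'+k}/R'\le\epsilon^{2^k}/R'\le\epsilon^{2^k}$ for every $k\ge0$, since $R'\ge1$. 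That is precisely the claim $c'_{i'}\le\epsilon^{2^{(i'-I')}}$ for all $i'\ge I'$, completing (c). The main obstacle, to reiterate, is part (a) and specifically the choice of the auxiliary quantity $u_i$; once its self-contained contracting recursion is isolated, parts (b) and (c) follow by straightforward substitution and the standard quadratic-convergence estimate.
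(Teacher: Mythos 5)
Your proposal is correct and follows essentially the same route as the paper: your auxiliary quantity $u_i=b'_ic_i+c'_i$ is exactly the paper's $a'_i$, you derive the same self-contained contraction $u_{i+1}\leqq Kc_iu_i$ with the same constant $K=RL+L+R$, and parts (a) and (b) then proceed identically. The only cosmetic difference is in (c), where you run the quadratic recursion $d_{i+1}\leqq d_i^2$ from an index with $R'c'_{I'}\leqq\epsilon$ instead of bounding $c'_{i+1}$ directly by the product $\prod_nc_n$ as the paper does; both are valid.
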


\begin{proof} At the expense of re-in\-dex\-ing our sequences, it will be no loss of generality to assume that \(I=0\). Under such assumption, for every \(i\) we will have \(c_i\leq\varepsilon^{2^i}\leq\varepsilon<1\), and therefore, \(c_i^2\leq c_i\). Let us put \(a'_i:=b'_ic_i+c'_i\). Then%
\begin{align*}
 a'_{i+1}&=b'_{i+1}c_{i+1}+c'_{i+1}
\\       &\leq b'_ic_i^2R+a'_ic_i^2LR+c'_{i+1}
&	&\text{by \eqref{eqn:12B.14.20a} and \eqref{eqn:12B.14.20c},}
\\       &\leq b'_ic_i^2R+a'_ic_iLR+c'_{i+1}
&	&\text{because $c_i^2\leq c_i$,}
\\       &\leq b'_ic_i^2R+(LR+L)a'_ic_i
&	&\text{by \eqref{eqn:12B.14.20b},}
\\       &\leq(L+LR)a'_ic_i+b'_ic_i^2R+c'_ic_iR
&	&\text{a~fortiori,}
\\       &=(L+LR)a'_ic_i+(b'_ic_i+c'_i)c_iR
\\       &=(L+LR+R)a'_ic_i
\end{align*}
and thus, setting \(K:=L+LR+R\),
\begin{equation*}
	a'_{i+1}\leq Ka'_ic_i,
\end{equation*}
whence \(a'_1\leq Ka'_0c_0\emphpunct, a'_2\leq KKa'_0c_0c_1\emphpunct, a'_3\leq KK^2a'_0c_0c_1c_2\) and, in general,
\begin{equation}
\label{eqn:12B.14.22}
\textstyle	a'_i\leq a'_0K^i\prod\limits_{n=0}^{i-1}c_n.
\end{equation}
Since \(1+2+\dotsb+2^{i-1}=2^i-1\), it follows from \eqref{eqn:12B.14.20a} in combination with \eqref{eqn:12B.14.22} and with the hypothesis \(c_n\leq\varepsilon^{2^n}\) that
\begin{align*}
 b'_{i+1}\leq b'_i+La'_i&\leq b'_i+La'_0K^i\varepsilon^{1+2+\dotsb+2^{i-1}}
\\                      &=b'_i+La'_0\varepsilon^{-1}K^i\varepsilon^{2^i}
\end{align*}
and, therefore, by induction,
\begin{equation*}
\textstyle	b'_i\leq b'_0+La'_0\varepsilon^{-1}\sum\limits_{n=0}^{i-1}K^n\varepsilon^{2^n}.
\end{equation*}
The last inequality implies that the sequence \(\{b'_i\}\) has to be bounded, which was our first claim (a). Using this fact in combination with the hypothesis \(c_i\leq c'_i\) and with \eqref{eqn:12B.14.20b}, we are then able to establish our second claim (b) as well:
\begin{equation*}
	c'_{i+1}\leq\mathinner{(b'_ic_i+c'_i)}c_iL\leq\mathinner{(b'_ic'_i+c'_i)}c'_iL=(c'_i)^2(b'_i+1)L.
\end{equation*}
As to our third claim (c), we have
\begin{align*}
 c'_{i+1}&\leq\biggl(a'_0K^i\textstyle\prod\limits_{n=0}^{i-1}c_n\biggr)c_iL
&	&\text{by \eqref{eqn:12B.14.20b} and \eqref{eqn:12B.14.22},}
\\       &\leq a'_0K^i\varepsilon^{1+2+\dotsb+2^{i-1}}\varepsilon^{2^i}L
&	&\text{because $c_n\leq\varepsilon^{2^n}$,}
\\       &=(La'_0\varepsilon^{-1}K^i\varepsilon^{2^i})\varepsilon^{2^i},
\end{align*}
the parenthesized factor being \(<1\) for \(i\) sufficiently large because \(\lim K^i\varepsilon^{2^i}=0\). \end{proof}

By virtue of the inductive hypotheses S3 and S4 and of the inequality \(q^{(r)}_\varOmega\leq q^{(r+1)}_\varOmega\), the three sequences of non-neg\-a\-tive real numbers \(c_i:=c^{(r)}_i\emphpunct, b'_i:=b^{(r+1)}_i\emphpunct, c'_i:=c^{(r+1)}_i\) satisfy the assumptions of Lemma~\ref{lem:12B.14.4} with \(L:=L^{(r)}\emphpunct, R:=R^{(r)}\). The truth of the inductive claims S1, S3 and S4 (for the next higher value of \(r\)) is then precisely the content of the lemma. As to the remaining inductive claim S2, this follows from the estimate \eqref{eqn:16a.42a} in conjunction with the already proven inductive claim S1 and the inductive hypothesis S2.

\subsection*{Choice of the relatively compact open cover $\{\varOmega\}$ and of the standard $C^r$\mdash norms $p_\varOmega^{(r)}$, $\bar p_\varOmega^{(r)}$ and $q_\varOmega^{(r)}$ \eqref{eqn:12B.14.3}}

Whenever the near multiplicativity inequality \eqref{eqn:12B.14.1} is satisfied on an invariant open subset \(U\subset M\) for some choice of metrics on \(E\mathbin|U\), the same inequality must be satisfied on any open (not necessarily invariant) subset of \(M\) whose closure lies within \(U\) for some choice of a {\em globally defined}\/ metric on \(E\).

Suppose that \(U\subset M\) is a relatively compact open subset on which \eqref{eqn:12B.14.1} holds for some choice of metrics on \(E\); furthermore, suppose that \(U\) is \emph{adjusted} to the given normalized Haar system \(\nu\) (Appendix~\ref{sec:16a.B}). The property of being \(\nu\)\mdash adjusted entails that \(\nu\) restricts on \(\varGamma\mathbin|U\tto U\) to a normalized Haar system \(\nu\mathbin|U\) such that the same identity as in the invariant case \eqref{eqn:16a.39} is satisfied; in other words, {\em averaging commutes with restriction}\/ over \(U\) when \(U\) is \(\nu\)\mdash adjusted.

For \(U\) as in the preceding paragraph, let \(\varOmega:=s^{-1}(U)\cap t^{-1}(U)\). The open sets \(\varOmega\) thus obtained form an open cover of \(\varGamma\). Since the closure \(\overline U\) of \(U\) within \(M\) is compact, so will be the closure \(\overline\varOmega\) of \(\varOmega\) within \(\varGamma\), because of properness. Moreover, \(\varOmega_2:=\varOmega\ftimes st\varOmega\) will be a relatively compact open subset of \(\varGamma_2:=\varGamma\ftimes st\varGamma\emphpunct, \varOmega_3:=\varOmega\ftimes st\varOmega\ftimes st\varOmega\) will be a similar subset of \(\varGamma_3:=\varGamma\ftimes st\varGamma\ftimes st\varGamma\), and so forth.

Considering \(U\) as fixed now, endow \(E\) with some vec\-tor-bun\-dle metric so that \eqref{eqn:12B.14.1} is satisfied. Regard \(L(s^*E,t^*E)\) as a normed vector bundle (see Appendix~\ref{sec:16a.A}) by endowing it with the continuous vec\-tor-bun\-dle norm defined by \eqref{eqn:16a.11}; similarly for \(L(t^*E,s^*E)\) and \(L(s_2^*E,t_2^*E)\). In addition, choose any three locally finite trivializing vec\-tor-bun\-dle atlases, say, \(\matheusm A\) for \(L(s^*E,t^*E)\), \(\tilde{\matheusm A}\) for \(L(t^*E,s^*E)\), and \(\matheusm B\) for \(L(s_2^*E,t_2^*E)\). Then, in the notations of Appendix~\ref{sec:16a.A} \eqref{eqn:12B.13.3}, make the following definitions for every natural number \(r\).
\begin{align*}
        p_\varOmega^{(r)}&:=\within\|\void\|_{C^r\overline\varOmega;L(s^*E,t^*E),\matheusm A}
\\ \bar p_\varOmega^{(r)}&:=\within\|\void\|_{C^r\overline\varOmega;L(t^*E,s^*E),\tilde{\matheusm A}}
\\      q_\varOmega^{(r)}&:=\within\|\void\|_{C^r\overline\varOmega_2;L(s_2^*E,t_2^*E),\matheusm B}
\end{align*}

We leave it as an exercise for the reader to verify that with these definitions the three inequalities \eqref{eqn:16a.41} are in fact satisfied. Further, the estimate \eqref{eqn:16a.42a} is a general consequence of Lemma~\ref{lem:12B.13.8} \eqref{eqn:12B.13.9}. So, to finish the proof of Theorem~\ref{thm:12B.14.2}, we only need to establish the other two estimates, \eqref{eqn:12B.14.10} and \eqref{eqn:12B.14.15}.

\subsection*{Proof of the estimate \eqref{eqn:12B.14.10}}

Let \(\d\nu\) denote the Haar integration functional~\eqref{eqn:12B.10.6b}
\begin{equation*}
	\Gamma^\infty\bigl(\varGamma_2;\pr_1^*L(s^*E,t^*E)\bigr)\longto\Gamma^\infty\bigl(\varGamma;L(s^*E,t^*E)\bigr)\emphpunct, \vartheta\mapsto\within*<\vartheta,\d\nu>:=\integral*\vartheta d\nu
\end{equation*}
depending on parameters in \(\varGamma\xto sM\). Also recall from \S\ref{sec:16a.4} that an invertible pseudo-rep\-re\-sen\-ta\-tion \(\zeta\in\Psr_\div(\varGamma;E)\) determines a cross-sec\-tion \eqref{eqn:16a.32}%
\begin{equation*}
	\Delta^\zeta\in\Gamma^\infty\bigl(\varGamma_\div;L(s_\div^*E,t_\div^*E)\bigr).
\end{equation*}
Let \(a\) denote the diffeomorphism \(\varGamma_2\approxto\varGamma_\div\emphpunct, (g,k)\mapsto(gk,k)\). Regarding \(a^*\Delta^\zeta\) as a cross-sec\-tion of the vector bundle \(\pr_1^*L(s^*E,t^*E)\) over \(\varGamma_2\)\textemdash and likewise regarding similar expressions occurring hereafter as cross-sec\-tions of the appropriate vector bundles\textemdash by the identity \eqref{eqn:12B.12.5a} we have, making use of the indicated lemmas from Appendix~\ref{sec:16a.A} and \ref{sec:16a.B},
\begin{flalign*}
&&\qquad	\within*\|\res^\varGamma_{\overline\varOmega}(\avg\zeta-\zeta)\|_{C^{r+1}}
		&=\within*\|\res^\varGamma_{\overline\varOmega}\within*<a^*\Delta^\zeta,\d\nu>\|_{C^{r+1}}
\\%
\makebox[.pt][l]{[by Lemma~\ref{lem:12B.20.1}: ]}
&&		&\leq\within*\|\res^{\varGamma_2}_{\overline\varOmega_2}(a^*\Delta^\zeta)\|_{C^{r+1}}
\tag{\dag}%
\\%
\makebox[.pt][l]{[by Lemma~\ref{lem:12B.13.9}: ]}
&&		&=\within*\|\res^{\varGamma_2}_{\overline\varOmega_2}(m^*\zeta-\pr_1^*\zeta\circ\pr_2^*\zeta)\circ\res^{\varGamma_2}_{\overline\varOmega_2}(\pr_2^*\zeta^{-1})\|_{C^{r+1}}
\\%
\makebox[.pt][l]{[by Lemma~\ref{lem:12B.13.7} \eqref{eqn:12B.13.5b}: ]}
&&		&\leq\within*\|\res^{\varGamma_2}_{\overline\varOmega_2}(m^*\zeta-\pr_1^*\zeta\circ\pr_2^*\zeta)\|_{C^r}\within*\|\res^{\varGamma_2}_{\overline\varOmega_2}(\pr_2^*\zeta^{-1})\|_{C^{r+1}}\\*	&&	&\justify+\within*\|\res^{\varGamma_2}_{\overline\varOmega_2}(m^*\zeta-\pr_1^*\zeta\circ\pr_2^*\zeta)\|_{C^{r+1}}\within*\|\res^{\varGamma_2}_{\overline\varOmega_2}(\pr_2^*\zeta^{-1})\|_{C^r}
\\%
\makebox[.pt][l]{[by Lemma~\ref{lem:12B.13.10}: ]}
&&		&\leq\within\|\mathellipsis\|_{C^r}\within*\|\res^\varGamma_{\overline\varOmega}(\zeta^{-1})\|_{C^{r+1}}+\within\|\mathellipsis\|_{C^{r+1}}\within*\|\res^\varGamma_{\overline\varOmega}(\zeta^{-1})\|_{C^r}
\\%
\makebox[.pt][l]{[by Lemma~\ref{lem:12B.13.8} \eqref{eqn:12B.13.9}: ]}
&&		&\leq\within\|\mathellipsis\|_{C^r}\within*\|\res^\varGamma_{\overline\varOmega}(\zeta^{-1})\|_{C^r}^2\within*\|\res^\varGamma_{\overline\varOmega}(\zeta)\|_{C^{r+1}}+\within\|\mathellipsis\|_{C^{r+1}}\within*\|\res^\varGamma_{\overline\varOmega}(\zeta^{-1})\|_{C^r}.	&&
\end{flalign*}
Plugging in the norms \eqref{eqn:12B.14.3}, this translates into the desired estimate \eqref{eqn:12B.14.10}.

\subsection*{Proof of the estimate \eqref{eqn:12B.14.15}}

Let \(m_{23},\pr_{12},\pr_{23}:\varGamma_3\to\varGamma_2\) denote the maps which to \((g',g,k)\in\varGamma_3\) associate, respectively, \((g',gk),(g',g),(g,k)\in\varGamma_2\). Let \(\d_2\nu\) stand for the Haar integration functional~\eqref{eqn:12B.10.6b}
\begin{equation*}
	\Gamma^\infty\bigl(\varGamma_3;\pr_{12}^*L(s_2^*E,t_2^*E)\bigr)\longto\Gamma^\infty\bigl(\varGamma_2;L(s_2^*E,t_2^*E)\bigr)\emphpunct, \vartheta\mapsto\within*<\vartheta,\d_2\nu>:=\integral*\vartheta d\nu
\end{equation*}
depending on parameters in \(\varGamma_2\xto{s_2}M\). By the identity \eqref{eqn:12B.12.5b}, since by virtue of the left invariance of our Haar system \(\nu\) the double integral therein occurring can be rewritten as
\begin{align*}
 \integral*\left({\integral*\Delta(g'gk,gk)dk}\right)\circ\Delta(gl,l)dl
	&=\integral*\left({\integral*\Delta(g'k',k')dk'}\right)\circ\Delta(gl,l)dl
\\	&=\within({\integral*\Delta(g'k',k')dk'})\circ\within({\integral*\Delta(gk,k)dk}),
\end{align*}
we have (making tacit use of Lemma~\ref{lem:12B.13.9} at the appropriate places)
\begin{flalign*}
&&	\within*\|\res^{\varGamma_2}_{\overline\varOmega_2}(m^*\avg\zeta-\pr_1^*\avg\zeta\circ\pr_2^*\avg\zeta)\|_{C^{r+1}}
		&\leq\within*\|\res^{\varGamma_2}_{\overline\varOmega_2}\within*<m_{23}^*a^*\Delta^\zeta\circ\pr_{23}^*a^*\Delta^\zeta,\d_2\nu>\|_{C^{r+1}}\\*	&&	&\justify+\within*\|\res^{\varGamma_2}_{\overline\varOmega_2}(\pr_1^*\within*<a^*\Delta^\zeta,\d\nu>)\circ\res^{\varGamma_2}_{\overline\varOmega_2}(\pr_2^*\within*<a^*\Delta^\zeta,\d\nu>)\|_{C^{r+1}}	&&
\\%
\makebox[.pt][l]{[by Lemma~\ref{lem:12B.20.1}: ]}
&&		&\leq\within*\|\res^{\varGamma_3}_{\overline\varOmega_3}(m_{23}^*a^*\Delta^\zeta)\circ\res^{\varGamma_3}_{\overline\varOmega_3}(\pr_{23}^*a^*\Delta^\zeta)\|_{C^{r+1}}\\*	&&	&\justify+\within*\|\res^{\varGamma_2}_{\overline\varOmega_2}(\pr_1^*\within*<\mathellipsis>)\circ\res^{\varGamma_2}_{\overline\varOmega_2}(\pr_2^*\within*<\mathellipsis>)\|_{C^{r+1}}
\\%
\makebox[.em][l]{[by Lemma~\ref{lem:12B.13.7} \eqref{eqn:12B.13.5b}: ]}
&&		&\leq\within*\|\res^{\varGamma_3}_{\overline\varOmega_3}(m_{23}^*a^*\Delta^\zeta)\|_{C^r}\within*\|\res^{\varGamma_3}_{\overline\varOmega_3}(\pr_{23}^*a^*\Delta^\zeta)\|_{C^{r+1}}\\*	&&	&\hskip 3pc\justify+\within*\|\res^{\varGamma_3}_{\overline\varOmega_3}(m_{23}^*a^*\Delta^\zeta)\|_{C^{r+1}}\within*\|\res^{\varGamma_3}_{\overline\varOmega_3}(\pr_{23}^*a^*\Delta^\zeta)\|_{C^r}\\*	&&	&\justify+\within*\|\res^{\varGamma_2}_{\overline\varOmega_2}(\pr_1^*\within<\mathellipsis>)\|_{C^r}\within*\|\res^{\varGamma_2}_{\overline\varOmega_2}(\pr_2^*\within<\mathellipsis>)\|_{C^{r+1}}\\*	&&	&\hskip 3pc\justify+\within*\|\res^{\varGamma_2}_{\overline\varOmega_2}(\pr_1^*\within<\mathellipsis>)\|_{C^{r+1}}\within*\|\res^{\varGamma_2}_{\overline\varOmega_2}(\pr_2^*\within<\mathellipsis>)\|_{C^r}
\\%
\makebox[.pt][l]{[by Lemma~\ref{lem:12B.13.10}: ]}
&&		&\leq\within*\|\res^{\varGamma_2}_{\overline\varOmega_2}(a^*\Delta^\zeta)\|_{C^r}\within*\|\res^{\varGamma_2}_{\overline\varOmega_2}(a^*\Delta^\zeta)\|_{C^{r+1}}\\*	&&	&\justify+\within*\|\res^\varGamma_{\overline\varOmega}\within*<a^*\Delta^\zeta,\d\nu>\|_{C^r}\within*\|\res^\varGamma_{\overline\varOmega}\within*<a^*\Delta^\zeta,\d\nu>\|_{C^{r+1}}
\\%
\makebox[.pt][l]{[by Lemma~\ref{lem:12B.20.1}: ]}
&&		&\leq\within*\|\res^{\varGamma_2}_{\overline\varOmega_2}(a^*\Delta^\zeta)\|_{C^r}\within*\|\res^{\varGamma_2}_{\overline\varOmega_2}(a^*\Delta^\zeta)\|_{C^{r+1}}
\\%
\makebox[.pt][l]{[by Lemma~\ref{lem:12B.13.7} \eqref{eqn:12B.13.5}: ]}
&&		&\leq\within*\|\res^{\varGamma_2}_{\overline\varOmega_2}(m^*\zeta-\pr_1^*\zeta\circ\pr_2^*\zeta)\|_{C^r}\within*\|\res^{\varGamma_2}_{\overline\varOmega_2}(\pr_2^*\zeta^{-1})\|_{C^r}\\*	&&	&\hskip 12pc\justify\times\within*\|\res^{\varGamma_2}_{\overline\varOmega_2}(a^*\Delta^\zeta)\|_{C^{r+1}}
\\%
\makebox[.pt][l]{[by Lemma~\ref{lem:12B.13.10}: ]}
&&		&\leq\within*\|\res^{\varGamma_2}_{\overline\varOmega_2}(m^*\zeta-\pr_1^*\zeta\circ\pr_2^*\zeta)\|_{C^r}\within*\|\res^\varGamma_{\overline\varOmega}(\zeta^{-1})\|_{C^r}\\*	&&	&\hskip 12pc\justify\times\within*\|\res^{\varGamma_2}_{\overline\varOmega_2}(a^*\Delta^\zeta)\|_{C^{r+1}}.
\end{flalign*}
The latter factor can be estimated as in the preceding subsection after the mark~(\dag). The desired estimate \eqref{eqn:12B.14.15} then drops out by plugging in the norms \eqref{eqn:12B.14.3}.


\section{Fast convergence theorem~B (connections)}\label{sec:16a.6}

We proceed to prove Theorem~\ref{thm:12B.15.1}. As in the previous section, we are given a proper Lie groupoid \(\varGamma\tto M\) along with a normalized Haar system \(\nu\) on \(\varGamma\tto M\). We are also given a \emph{nearly effective} connection (cf.~Definition~\ref{defn:12B.14.1}) \(\varPsi\in\Conn_1(\varGamma)\); we let \(\lambda:=\lambda^\varPsi\) stand for the effect of \(\varPsi\), which by definition, is a near representation. These data will remain fixed throughout the sequel.

The first thing to see is that the sequence \(\{\avg[i]\eta:=\eta^{\avg[i]\varPsi}\emphpunct| i=0,1,2,\dotsc\}\) is Cauchy within \(\Gamma^\infty\bigl(\varGamma;L(s^*TM,T\varGamma)\bigr)\), the Fr\'e\-chet space of cross-sec\-tions of class \(C^\infty\) of the vector bundle \(L(s^*TM,T\varGamma)\), and hence convergent therein to a (unique) cross-sec\-tion, say, \(\avg[\infty]\eta\), of class \(C^\infty\). Since \(C^\infty\)\mdash convergence implies pointwise convergence\textemdash so that \(\avg[\infty]\eta_g=\lim\avg[i]\eta_g\) for all \(g\in\varGamma\)\textemdash this has to be the horizontal lift for a (unique) connection, say, \(\avg[\infty]\varPsi\) on \(\varGamma\tto M\), necessarily unital; we contend that \(\avg[\infty]\varPsi\) is multiplicative.

Proving our first claim is tantamount to showing that the the sequence of differences \(\avg[i]\eta-\avg[0]\eta\) is Cauchy within the Fr\'e\-chet (i.e., closed) subspace \(\Gamma^\infty\bigl(\varGamma;L(s^*TM,\ker{\d s})\bigr)\subset\Gamma^\infty\bigl(\varGamma;L(s^*TM,T\varGamma)\bigr)\), or alternatively by virtue of Lemma~\ref{lem:12B.13.9} on p.~\pageref{lem:12B.13.9}, that the sequence of \emph{\(\varPsi\)\mdash vertical components\/ \eqref{eqn:12B.11.12} \(\avg[i]X:=X^{\avg[i]\varPsi:\varPsi}\)} is Cauchy within \(\Gamma^\infty\bigl(\varGamma;L(s^*TM,t^*\mathfrak g)\bigr)\). Our method will be the same as before: apply Lemma~\ref{cor:12B.13.5} (Appendix~\ref{sec:16a.A}), for each finite order of derivation \(r\), to a suitable relatively compact open cover \(\{\varOmega\}\) of the manifold \(\varGamma\), namely, the one associated with the near representation \(\lambda\sidetext(=\text{ the effect of }\varPsi)\) in the way described in the preceding section.

Recall from \S\ref{sec:16a.3} that for every non-de\-gen\-er\-ate connection \(\varPhi\in\Conn_\div(\varGamma)\) there is the \emph{\(\varPhi\)\mdash vertical component} of the \emph{division cocycle} \eqref{eqn:12B.11.5} \(\Delta^\varPhi:=q_\div^*\omega\circ q_\div^*\pi^\varPhi\circ\delta^\varPhi\), which by analogy with \ref{npar:12B.12.1} \eqref{eqn:12B.12.3}, here we want to consider as a cross-sec\-tion \(\Delta^\varPhi\in\Gamma^\infty\bigl(\varGamma_\div;L(s_\div^*TM,t_\div^*\mathfrak g)\bigr)\). Similar to before, let us put \(\varOmega_\div:=\varOmega\ftimes ss\varOmega\). (By properness, this is a relatively compact open subset of \(\varGamma_\div\).) Let us fix arbitrary standard \(C^r\)\mdash norms, say,%
\begin{subequations}
\label{eqn:16a.45}
\begin{align}
 p^{[r]}_\varOmega\quad
	&\text{on}\quad	\Gamma^r\bigl(\overline\varOmega;L(s^*TM,t^*\mathfrak g)\bigr),
\label{eqn:16a.45a}
\\ q^{[r]}_\varOmega\quad
	&\text{on}\quad	\Gamma^r\bigl(\overline\varOmega_\div;L(s_\div^*TM,t_\div^*\mathfrak g)\bigr),
\label{eqn:16a.45b}
\end{align}
\end{subequations}
and then for any connections \(H,\varPhi\in\Conn(\varGamma)\) (where appropriate, non-de\-gen\-er\-ate) set
\begin{align*}
 b^{[r]}(H\mathinner:\varPhi)&:=p^{[r]}_\varOmega\bigl(\res^\varGamma_{\overline\varOmega}(X^{H:\varPhi})\bigr),
\\           c^{[r]}(\varPhi)&:=q^{[r]}_\varOmega\bigl(\res^{\varGamma_\div}_{\overline\varOmega_\div}(\Delta^\varPhi)\bigr).
\end{align*}

We are going to show that there exist positive constants \(B^{[r]}\) and \(C^{[r]}\) such that for any non-de\-gen\-er\-ate connection \(\varPhi\in\Conn_\div(\varGamma)\)%
\begin{gather}
	b^{[r]}(\avg\varPhi\mathinner:\varPhi)\leq c^{[r]}(\varPhi)B^{[r]}
\label{eqn:12B.15.11}
\\\intertext{%
and, in the notations of \S\ref{sec:16a.5}, providing that \(\avg\varPhi\) itself is non-de\-gen\-er\-ate,%
}	c^{[r]}(\avg\varPhi)\leq\bar b^{(r)}(\lambda^\varPhi)\bar b^{(r)}\bigl(\avg(\lambda^\varPhi)\bigr)c^{(r)}(\lambda^\varPhi)c^{[r]}(\varPhi)C^{[r]}.
\label{eqn:12B.15.10}
\end{gather}

Let us suppose for a moment that we have already established these estimates. Let us set%
\begin{equation*}
	c^{[r]}_i:=c^{[r]}(\avg[i]\varPsi).
\end{equation*}
From \eqref{eqn:12B.15.10}, we immediately deduce that for every \(i\)
\begin{equation*}
	c^{[r]}_{i+1}\leq\bar b^{(r)}_i\bar b^{(r)}_{i+1}c^{(r)}_ic^{[r]}_iC^{[r]}.
\end{equation*}
By Statement~S2 in the proof of Theorem~\ref{thm:12B.14.2}~(\S\ref{sec:16a.5}), the sequence \(\{\bar b^{(r)}_i\}\) has to be bounded, so there must be some positive constant \(K\) such that the inequality below holds for all \(i\),
\begin{equation*}
	c^{[r]}_{i+1}\leq c^{(r)}_ic^{[r]}_iK
\end{equation*}
whence \(c^{[r]}_1\leq c^{(r)}_0c^{[r]}_0K\emphpunct, c^{[r]}_2\leq c^{(r)}_1c^{(r)}_0c^{[r]}_0KK\emphpunct, c^{[r]}_3\leq c^{(r)}_2c^{(r)}_1c^{(r)}_0c^{[r]}_0K^2K\) and, in general, for \(i\geq i^{(r)}\), where \(i^{(r)}\) is as in Statement~S4 in the proof of Theorem~\ref{thm:12B.14.2},
\begin{align*}
 c^{[r]}_i
	&\leq\biggl(\textstyle\prod\limits_{n=0}^{i-1}c^{(r)}_n\biggr)c^{[r]}_0K^i
\\	&=\biggl(K^{i-i^{(r)}}\textstyle\prod\limits_{n=i^{(r)}}^{i-1}c^{(r)}_n\biggr)c^{[r]}_0K^{i^{(r)}}\textstyle\prod\limits_{n=0}^{i^{(r)}-1}c^{(r)}_n
\\	&\leq(K^{i-i^{(r)}}\varepsilon^{1+2+\dotsb+2^{i-i^{(r)}-1}})c^{[r]}_0K^{i^{(r)}}\textstyle\prod\limits_{n=0}^{i^{(r)}-1}c^{(r)}_n
\\	&=(K^{i-i^{(r)}}\varepsilon^{2^{i-i^{(r)}}})\varepsilon^{-1}c^{[r]}_0K^{i^{(r)}}\textstyle\prod\limits_{n=0}^{i^{(r)}-1}c^{(r)}_n.
\tag{\dag}
\end{align*}
Now, in view of the obvious identity \(X^{H:\varPsi}-X^{\varPhi:\varPsi}=X^{H:\varPhi}\), valid for any \(H,\varPhi\in\Conn(\varGamma)\), thanks to \eqref{eqn:12B.15.11} we see that for \(i\geq i^{(r)}\)%
\begin{align*}
 p^{[r]}_\varOmega\bigl(\res^\varGamma_{\overline\varOmega}(\avg[i+1]X)-\res^\varGamma_{\overline\varOmega}(\avg[i]X)\bigr)
	&=p^{[r]}_\varOmega\bigl(\res^\varGamma_{\overline\varOmega}(X^{\avg[i+1]\varPsi:\avg[i]\varPsi})\bigr)
\\	&=b^{[r]}(\avg[i+1]\varPsi\mathinner:\avg[i]\varPsi)
\\	&\leq c^{[r]}_iB^{[r]}\leq(K^{i-i^{(r)}}\varepsilon^{2^{i-i^{(r)}}})\cdot\text{constant.}
\end{align*}
From this, it follows immediately that the sequence of restrictions \(\{\res^\varGamma_{\overline\varOmega}(\avg[i]X)\}\) is Cauchy, as claimed. As \(\varOmega\) is let vary, from the estimates (\dag) it also follows that the sequence \(\{\Delta^{\avg[i]\varPsi}\}\) converges within \(\Gamma^\infty\bigl(\varGamma_\div;L(s_\div^*TM,t_\div^*\mathfrak g)\bigr)\) to zero; this implies multiplicativity of \(\avg[\infty]\varPsi\):
\begin{align*}
 &\omega_{gh^{-1}}\circ\bigl((\eta^{\avg[\infty]\varPsi}_g\mathpunct\div\eta^{\avg[\infty]\varPsi}_h)-\eta^{\avg[\infty]\varPsi}_{gh^{-1}}\lambda^{\avg[\infty]\varPsi}_h\bigr)
\\*	&\hskip 3em=\omega_{gh^{-1}}\circ\bigl(T_{(g,h)}q_\div\mathinner\circ(\avg[\infty]\eta_g,\avg[\infty]\eta_h)-\avg[\infty]\eta_{gh^{-1}}\mathinner\circ T_ht\mathinner\circ\avg[\infty]\eta_h\bigr)
\\	&\hskip 3em=\lim{}\bigl\{\omega_{gh^{-1}}\circ\bigl(T_{(g,h)}q_\div\mathinner\circ(\avg[i]\eta_g,\avg[i]\eta_h)-\avg[i]\eta_{gh^{-1}}\mathinner\circ T_ht\mathinner\circ\avg[i]\eta_h\bigr)\bigr\}
\\	&\hskip 3em=\lim{}\bigl\{\omega_{gh^{-1}}\circ\bigl((\avg[i]\eta_g\mathinner\div\avg[i]\eta_h)-\avg[i]\eta_{gh^{-1}}\avg[i]\lambda_h\bigr)\bigr\}
\\	&\hskip 3em=\lim{}\bigl\{\omega_{gh^{-1}}\circ\bigl(\delta^{\avg[i]\varPsi}(g,h)-\eta^{\avg[i]\varPsi}_{gh^{-1}}\bigr)\circ\avg[i]\lambda_h\bigr\}
\\	&\hskip 3em=\bigl\{\lim\Delta^{\avg[i]\varPsi}(g,h)\bigr\}\circ\avg[\infty]\lambda_h=0,
\end{align*}
which proves that \(\avg[\infty]\varPsi\) satisfies the multiplicativity condition \eqref{eqn:12B.11.4} and hence (being unital, as we know) is multiplicative.

\begin{stmt}\label{stmt:16a.6.1} The equations below\textemdash which are the analogs for connections of\/ \eqref{eqn:12B.12.5b}, \eqref{eqn:12B.12.5a}\textemdash hold:%
\begin{subequations}
\label{eqn:12B.15.5}
\begin{align}
\begin{split}
 \Delta^{\avg\varPhi}(g,h)\avg\lambda^\varPhi_h
	&=\integral*\Delta^\varPhi(gk,hk)\circ\Delta^{\lambda^\varPhi}(hk,k)dk
\\	&\justify-\integral*\Delta^\varPhi(gk,hk)dk\circ\integral*\Delta^{\lambda^\varPhi}(hk,k)dk
\end{split}
\label{eqn:12B.15.5b}
\\ X^{\avg\varPhi:\varPhi}_g
	&=\integral*\Delta^\varPhi(gk,k)dk
\label{eqn:12B.15.5a}
\end{align}
\end{subequations} \end{stmt}

\begin{proof} For every non-de\-gen\-er\-ate connection \(H\in\Conn_\div(\varGamma)\) the difference \(\delta^H(g,h)-\eta^H_{gh^{-1}}\) is a linear map of \(T_{th}M\) into \(\ker T_{gh^{-1}}s\), and
\begin{equation*}
	\Delta^H(g,h)=\omega_{gh^{-1}}\circ\bigl(\delta^H(g,h)-\eta^H_{gh^{-1}}\bigr),
\end{equation*}
so for any other non-de\-gen\-er\-ate groupoid connection \(\varPhi\),
\begin{align*}
 \Delta^H(g,h)\lambda^H_h
	&=\omega_{gh^{-1}}\circ\bigl(\delta^H(g,h)-\eta^H_{gh^{-1}}\bigr)\circ\lambda^H_h
\\	&=\omega_{gh^{-1}}\circ\pi^\varPhi_{gh^{-1}}\circ\bigl(\delta^H(g,h)\lambda^H_h-\eta^H_{gh^{-1}}\lambda^H_h\bigr)
\\	&=\pr_1\circ\sigma^\varPhi_{gh^{-1}}\circ\bigl((\eta^H_g\div\eta^H_h)-\eta^H_{gh^{-1}}\lambda^H_h\bigr)
&&\text{by definition \eqref{eqn:12B.11.8},}
\\	&=\dot q^\varPhi_\updownarrow(g,h)(X^H_g,X^H_h)+\Delta^\varPhi(g,h)\lambda^\varPhi_h-X^H_{gh^{-1}}\lambda^H_h
&&\text{by \eqref{eqn:12B.11.11} and \eqref{eqn:12B.11.16}.}
\end{align*}
Under the provision that \(\avg\varPhi\) be itself non-de\-gen\-er\-ate, making \(H:=\avg\varPhi\), setting \(\avg X^\varPhi:=X^{\avg\varPhi:\varPhi}\), and referring back to our computations in the proof of Proposition~\ref{prop:12B.11.7}~(\S\ref{sec:16a.3}),
\begin{align*}
 \Delta^{\avg\varPhi}(g,h)\avg\lambda^\varPhi_h
	&=\dot q^\varPhi_\updownarrow(g,h)(\avg X^\varPhi_g,\avg X^\varPhi_h)-\avg X^\varPhi_{gh^{-1}}\avg\lambda^\varPhi_h+\Delta^\varPhi(g,h)\lambda^\varPhi_h
\\	&=\iintegral*\Delta^\varPhi(gk,hk)\circ\dot s^\varPhi_\updownarrow(h)\circ\bigl(\Delta^\varPhi(hk,k)-\Delta^\varPhi(hk',k')\bigr)dkdk'
\\	&=\integral*\Delta^\varPhi(gk,hk)\circ\Delta^{\lambda^\varPhi}(hk,k)dk-\iintegral*\Delta^\varPhi(gk,hk)\circ\Delta^{\lambda^\varPhi}(hk',k')dkdk',
\end{align*}
which evidently gives \eqref{eqn:12B.15.5b}. The other identity, \eqref{eqn:12B.15.5a}, is equally clear:
\begin{align*}
 X^{\avg\varPhi:\varPhi}_g=\omega_g\circ(\avg\eta^\varPhi_g-\eta^\varPhi_g)
	&=\omega_g\circ\integral*\bigl(\delta^\varPhi(gk,k)-\eta^\varPhi_g\bigr)dk
\\	&=\integral*\omega_g\circ\bigl(\delta^\varPhi(gk,k)-\eta^\varPhi_g\bigr)dk=\integral*\Delta^\varPhi(gk,k)dk.	\qedhere%
\end{align*} \end{proof}

\begin{npar}\label{npar:16a.6.2} {\it Proof of the estimate\/ \eqref{eqn:12B.15.11}}\emphpunct: Letting \(\d\nu\) indicate the integration functional \eqref{eqn:12B.10.6b}
\begin{equation*}
	\Gamma^\infty\bigl(\varGamma_2;\pr_1^*L(s^*TM,t^*\mathfrak g)\bigr)\longto\Gamma^\infty\bigl(\varGamma;L(s^*TM,t^*\mathfrak g)\bigr)\emphpunct, \vartheta\mapsto\within*<\vartheta,\d\nu>:=\integral*\vartheta d\nu
\end{equation*}
depending on parameters in \(\varGamma\xto sM\), in view of the identity \eqref{eqn:12B.15.5a} and of Lemma~\ref{lem:12B.20.1}, we have
\begin{align*}
 \within*\|\res^\varGamma_{\overline\varOmega}(X^{\avg\varPhi:\varPhi})\|_{C^r}
	&=\within*\|\res^\varGamma_{\overline\varOmega}\within*<a^*\Delta^\varPhi,\d\nu>\|_{C^r}
\\	&\leq\within*\|\res^{\varGamma_\div}_{\overline\varOmega_\div}(\Delta^\varPhi)\|_{C^r},
\end{align*}
whence the desired estimate \eqref{eqn:12B.15.11} by plugging in the specified standard \(C^r\)\mdash norms \eqref{eqn:16a.45}. \end{npar}

\begin{npar}\label{npar:16a.6.3} {\it Proof of the estimate \eqref{eqn:12B.15.10}}\emphpunct: Let \(\pr_\div:\varGamma_\div\to\varGamma\) indicate the map \((g,h)\mapsto h\); further, let \(r_\div:\varGamma_\div\to M\) indicate the map \((g,h)\mapsto sg=sh\), and then \(m_\div:\varGamma_\div\ftimes{r_\div}t\varGamma\to\varGamma_\div\) the map \((g,h;k)\mapsto(gk,hk)\). Also, let \(\d_\div\nu\) denote the appropriate Haar integration functional \eqref{eqn:12B.10.6b} depending on parameters in \(\varGamma_\div\xto{r_\div}M\) associated with the vector bundle \(L(r_\div^*TM,t_\div^*\mathfrak g)\), resp., \(L(s_\div^*TM,t_\div^*\mathfrak g)\). Finally, let \(\d\nu\) have the same meaning as in the previous section. By virtue of \eqref{eqn:12B.15.5b} and of the appropriate lemmas from the appendices \ref{sec:16a.A}~and \ref{sec:16a.B}, we have
\begin{align*}
 \within*\|\res^{\varGamma_\div}_{\overline\varOmega_\div}(\Delta^{\avg\varPhi})\|_{C^r}
	&=\within*\|\res^{\varGamma_\div}_{\overline\varOmega_\div}(\Delta^{\avg\varPhi}\mathpunct\circ\pr_\div^*\avg\lambda^\varPhi)\circ\res^{\varGamma_\div}_{\overline\varOmega_\div}(\pr_\div^*\avg\lambda^\varPhi)^{-1}\|_{C^r}
\\		&\leq\within*\|\res^{\varGamma_\div}_{\overline\varOmega_\div}(\Delta^{\avg\varPhi}\mathpunct\circ\pr_\div^*\avg\lambda^\varPhi)\|_{C^r}\within*\|\res^{\varGamma_\div}_{\overline\varOmega_\div}\bigl(\pr_\div^*(\avg\lambda^\varPhi)^{-1}\bigr)\|_{C^r}
\\	&\leq\|\res^{\varGamma_\div}_{\overline\varOmega_\div}\within*<m_\div^*\Delta^\varPhi\mathpunct\circ(\pr_\div\mathpunct\times\id)^*a^*\Delta^{\lambda^\varPhi},\d_\div\nu>
\\	&\phantom{\|}\justify-\res^{\varGamma_\div}_{\overline\varOmega_\div}(\within*<m_\div^*\Delta^\varPhi,\d_\div\nu>\circ\pr_\div^*\within*<a^*\Delta^{\lambda^\varPhi},\d\nu>)\|^{}_{C^r}\within*\|\res^\varGamma_{\overline\varOmega}(\avg\lambda^\varPhi)^{-1}\|_{C^r}
\\	&\leq\Bigl(\within*\|\res^{\varGamma_\div\ftimes{r_\div}t\varGamma}_{\overline{\varOmega_\div\ftimes{r_\div}t\varOmega}}\bigl(m_\div^*\Delta^\varPhi\mathpunct\circ(\pr_\div\mathpunct\times\id)^*a^*\Delta^{\lambda^\varPhi}\bigr)\|_{C^r}
\\	&\phantom{\Bigl(}\justify+\within*\|\res^{\varGamma_\div}_{\overline\varOmega_\div}\within*<m_\div^*\Delta^\varPhi,\d_\div\nu>\|_{C^r}\within*\|\res^{\varGamma_\div}_{\overline\varOmega_\div}(\pr_\div^*\within*<a^*\Delta^{\lambda^\varPhi},\d\nu>)\|_{C^r}\Bigr)\within*\|\res^\varGamma_{\overline\varOmega}(\avg\lambda^\varPhi)^{-1}\|_{C^r}
\\	&\leq\Bigl(\within*\|\res^{\varGamma_\div\ftimes{r_\div}t\varGamma}_{\overline{\varOmega_\div\ftimes{r_\div}t\varOmega}}(m_\div^*\Delta^\varPhi)\|_{C^r}\within*\|\res^{\varGamma_\div\ftimes{r_\div}t\varGamma}_{\overline{\varOmega_\div\ftimes{r_\div}t\varOmega}}\bigl((\pr_\div\mathpunct\times\id)^*a^*\Delta^{\lambda^\varPhi}\bigr)\|_{C^r}
\\	&\phantom{\Bigl(}\justify+\within*\|\res^{\varGamma_\div\ftimes{r_\div}t\varGamma}_{\overline{\varOmega_\div\ftimes{r_\div}t\varOmega}}(m_\div^*\Delta^\varPhi)\|_{C^r}\within*\|\res^\varGamma_{\overline\varOmega}\within*<a^*\Delta^{\lambda^\varPhi},\d\nu>\|_{C^r}\Bigr)\within*\|\res^\varGamma_{\overline\varOmega}(\avg\lambda^\varPhi)^{-1}\|_{C^r}
\\	&\leq\within*\|\res^{\varGamma_\div}_{\overline\varOmega_\div}(\Delta^\varPhi)\|_{C^r}\within*\|\res^{\varGamma_2}_{\overline\varOmega_2}(a^*\Delta^{\lambda^\varPhi})\|_{C^r}\within*\|\res^\varGamma_{\overline\varOmega}(\avg\lambda^\varPhi)^{-1}\|_{C^r}
\\	&\leq\within*\|\res^{\varGamma_\div}_{\overline\varOmega_\div}(\Delta^\varPhi)\|_{C^r}\within*\|\res^{\varGamma_2}_{\overline\varOmega_2}(m^*\lambda^\varPhi-\pr_1^*\lambda^\varPhi\circ\pr_2^*\lambda^\varPhi)\|_{C^r}
\\	&\hskip 12em\justify[+]\times\within*\|\res^\varGamma_{\overline\varOmega}(\lambda^\varPhi)^{-1}\|_{C^r}\within*\|\res^\varGamma_{\overline\varOmega}(\avg\lambda^\varPhi)^{-1}\|_{C^r},
\end{align*}
whence after plugging in our standard \(C^r\)\mdash norms \eqref{eqn:16a.45}, the desired estimate \eqref{eqn:12B.15.10}. \end{npar}

\section{Reduction to the regular case}\label{sec:16a.7}

We are finally in a position to prove the principal result of this paper, Theorem~\ref{prop:14A.5.2}. We start by establishing a special case of the theorem, Corollary~\ref{cor:14A.4.3}, which is substantially a direct consequence of the fast convergence result proved in the last section. By a fairly classical argument (cf.~\cite[Proof of Theorem~7.1]{Wein}), this corollary can be seen to imply the local linearizability of proper Lie groupoids around fix-points. Hence, as a byproduct of our fast convergence results, we obtain yet another proof of the linearization theorem, alternative to Zung's \cite{Zung} and perhaps closer in spirit to Wein\-stein's original idea of averaging the action of a certain ``near group'' of bisections recursively (see the introduction to \cite{Wein00}). With hindsight, we could say that our averaging operator \eqref{eqn:16a.8} is the missing ingredient which was needed to generalize Wein\-stein's arguments from bundles of compact Lie groups to arbitrary proper Lie groupoids. In any case, we will not concern ourselves with linearization problems here. The applications of Theorem~\ref{prop:14A.5.2}\textemdash which, we stress, is a global result\textemdash go well beyond Corollary~\ref{cor:14A.4.3}.

We begin with a general remark. Let \(\lambda\in\Psr(\varGamma;E)\) be an arbitrary pseudo-rep\-re\-sen\-ta\-tion of our proper Lie groupoid \(\varGamma\tto M\) on some vector bundle \(E\) over \(M\), and let \(S\) be an invariant subset of \(M\) along which \(\lambda\) is multiplicative, meaning that \(\lambda(1_x)=\id\) for all base points \(x\in S\) and that \(\lambda(g_1g_2)=\lambda(g_1)\lambda(g_2)\) for any composable arrows \(g_1,g_2\in\varGamma(S,S)\). Let us endow \(E\) with some vec\-tor-bun\-dle metric. By averaging the metric with respect to \(\lambda\), we can always arrange for the situation when \(\lambda(g)\) is an isometry of \(E_{sg}\) onto \(E_{tg}\) for every arrow \(g\in\varGamma(S,S)\).

\begin{lem}\label{lem:14A.4.1} Suppose that\/ $U$ is an open subset of\/ $M$ such that\/ $\varGamma U=M$ and such that the intersection\/ $\overline U\cap\varGamma K$ is compact for any compact set\/ $K\subset U$. Then within\/ $U$ one can find a relatively invariant open neighborhood\/ $V=U\cap\varGamma V$ of the intersection\/ $U\cap\overline S$ with the property that for all arrows\/ $g$ and composable pairs of arrows\/ $g_1,g_2$ in $\varGamma(V,V)$ the following estimates, involving the operator norms\/ \eqref{eqn:16a.11}, hold.%
\begin{subequations}
\label{eqn:14A.4.1}
\begin{gather}
	\label{eqn:14A.4.1a}	\within*\|\lambda(g)\|<\sqrt 2
\\	\label{eqn:14A.4.1b}	\within*\|\lambda(g_1g_2)-\lambda(g_1)\circ\lambda(g_2)\|<\frac 1{18}
\end{gather}
\end{subequations} \end{lem}

\begin{proof} The union \(\bigcup V_i\) of any family of relatively invariant open subsets \(V_i\subset U\) over which the estimates \eqref{eqn:14A.4.1} hold must itself be one such subset of \(U\). Thus, if we set \(C=\overline S\), it will be enough to show that each point \(x\in C\cap U\) admits some relatively invariant open neighborhood over which the desired estimates hold. Consider the open neighborhoods
\begin{alignat*}{5}
 \varOmega&:=\{g\in\varGamma:\within*\|\lambda_g\|<\sqrt 2\}
&\mskip 7mu&\text{of}&\mskip 7mu
 \varSigma&:=s^{-1}(C)
&\mskip 7mu&\text{within}&\mskip 7mu
&\varGamma\text,
\\ \varOmega_2&:=\biggl\{(g_1,g_2)\in\varGamma_2:\within*\|\lambda_{g_1g_2}-\lambda_{g_1}\lambda_{g_2}\|<\frac 1{18}\biggr\}
&\mskip 7mu&\text{of}&\mskip 7mu
 \varSigma_2&:=\varSigma\ftimes st\varSigma
&\mskip 7mu&\text{within}&\mskip 7mu
&\varGamma_2\text.
\end{alignat*}
Pick a sequence \(B_n\ni x\) of relatively compact open balls with \(B_n\supset\overline B_{n+1}\) and \(\bigcap B_n=\{x\}\). Put \(V_n:=\varGamma B_n\cap U\emphpunct, K_n:=\varGamma\overline B_n\cap\overline U\), and \(A_n:=s^{-1}(K_n)\cap t^{-1}(K_n)\). By properness, for \(n\) large we will have \(A_n\subset\varOmega\) and \(A_n\ftimes stA_n\subset\varOmega_2\). Since \(\varGamma(V_n,V_n)\subset A_n\) and \(\varGamma(V_n,V_n)\ftimes st\varGamma(V_n,V_n)\subset A_n\ftimes stA_n\), the neighborhood \(V_n\ni x\) will have the required properties. \end{proof}

Fix an arbitrary sequence $U_0,U_1,U_2,\dotsc$ of open sets as in the lemma with $U_0\subset U_1\subset U_2\subset\dotsb$ and $\bigcup_{n=0}^\infty U_n=M$. For each $n$ let $V_n$ be a relatively invariant open neighborhood of the intersection $U_n\cap\overline S$ within $U_n$ such that the estimates \eqref{eqn:14A.4.1} hold for all arrows $g$ and composable pairs of arrows $g_1,g_2$ in $\varGamma(V_n,V_n)$, whose existence is guaranteed by the lemma. Observe that when $m<n$,
\begin{align*}
 V_m\cap\varGamma V_n&=(V_m\cap U_m)\cap\varGamma V_n
\\	&\subset V_m\cap(U_n\cap\varGamma V_n)
\\	&=V_m\cap V_n.
\end{align*}
Setting $V=\bigcup_{n=0}^\infty V_n$, if $g_1,g_2\in\varGamma(V,V)$ are composable then for some $n$ both $g_1$ and $g_2$ must lie within $\varGamma(V_n,V_n)$, so by \eqref{eqn:14A.4.1} the following estimate must be true,
\begin{equation*}
	\within*\|\lambda_{g_1g_2}-\lambda_{g_1}\lambda_{g_2}\|<\frac 19\mathinner{\Bigl(\sup_{g\in\varGamma(V,V)}\within*\|\lambda_g\|\Bigr)}^{-2}
\end{equation*}
which tells one precisely that the restriction to \(\varGamma\mathbin|V\tto V\) of \(\lambda\) is a near representation (at least when $\lambda$ is unital). We have therefore proved the first half of the following result:

\begin{prop}\label{prop:14A.4.2} Let\/ \(\varGamma\tto M\) be any Lie groupoid which is proper. Let\/ \(H\) be an arbitrary unital connection on it and let\/ \(S\subset M\) be an invariant subset along which\/ \(H\) is effective. There exist open neighborhoods\/ $V$ of\/ $\overline S$ within\/ $M$ for which\/ $H\mathbin|V$ on\/ $\varGamma\mathbin|V\tto V$ is nearly effective; when\/ $\varGamma\tto M$ is source proper, $V$ can be taken invariant. \end{prop}

\noindent The remaining half (concerning the source-prop\-er situation) can be proved by observing that every neighborhood $V$ of an orbit $\varGamma x$ must contain some invariant open tube around $x$ and then taking the union of all such tubes as $x$ ranges across $\overline S$.

As an immediate application of Proposition~\ref{prop:14A.4.2} (in combination with our fast convergence theorem~B\textemdash Theorem~\ref{thm:12B.15.1}) we get the following special case of Theorem~\ref{prop:14A.5.2}:

\begin{cor}\label{cor:14A.4.3} Let\/ $\varGamma\tto M$ be an arbitrary proper Lie groupoid. Over some open neighborhood\/ $V$ of its semi-fixed regular locus\/ \(M_0\) the groupoid admits multiplicative connections; when\/ $\varGamma\tto M$ is source proper, such a neighborhood\/ $V$ can be taken invariant. \end{cor}

\subsection*{Proof of Theorem~\ref{prop:14A.5.2}}

Let us now turn to the demonstration of the general theorem. For clarity, we are going to subdivide our argument into steps.

\vskip\topsep\noindent{\it (Step~0; Preliminaries.)} To begin with, let us point out that it is not restrictive to argue under the extra hypothesis that \(C\cup Z\) be closed within \(M\). This can always be achieved by taking \(M\) smaller around \(U\cup Z\) and then replacing \(C\) with a slightly larger invariant closed set, as follows. Since \(Z\subset M\) is a differentiable submanifold, we can find \(A\supset Z\) open such that \(A\cap\overline Z\subset Z\). At the expense of substituting \(M\) with \(A\cup U\), we may assume that \(\overline Z\smallsetminus Z\subset U\). If we now further replace \(C\) with \(C\cup(\overline Z\smallsetminus Z)\), we accomplish the desired situation.

For any choice of a Riemannian metric on \(M\), there is a normal vector bundle \(NZ\) over \(Z\) defined at every \(z\in Z\) by setting \(N_zZ:=(T_zZ)^\bot\), along with a canonical vec\-tor-bun\-dle decomposition \[%
	TM\mathbin|Z\cong TZ\oplus NZ.
\] For every \(x\in M\), let \(\varLambda_x:=T_x(\varGamma x)\). Since \(Z\) is invariant, the infinitesimal effect (see \cite[\S 1]{Tre6}) of each arrow \(g\in\varGamma(Z,Z)\) will carry \(T_{sg}Z\mathbin/\varLambda_{sg}\subset T_{sg}M\mathbin/\varLambda_{sg}\) into \(T_{tg}Z\mathbin/\varLambda_{tg}\subset T_{tg}M\mathbin/\varLambda_{tg}\) and therefore descend, along the quotient projections \(T_zM\mathbin/\varLambda_z\to T_zM\mathbin/T_zZ\cong N_zZ\), to a well-de\-fined linear map, hereafter denoted $\nu_Z(g)$, of $N_{sg}Z$ into $N_{tg}Z$; with this, we obtain a well-de\-fined smooth representation \(\nu_Z\) of \(\varGamma\mathbin|Z\tto Z\) on \(NZ\).

\vskip\topsep\noindent{\it (Step~1.)} We proceed to prove that at the expense of taking \(M\) smaller around \(C\cup Z\) and \(U\) smaller around \(C\), a Riemannian metric can be found on \(M\) which, on \(U\), is invariant under \(\lambda^\varPhi\) and, along \(Z\), is invariant under \(\lambda^\varTheta\). Let us choose an arbitrary open set $U'$ with $C\subset U'\subset\overline{U'}\subset U$. Clearly, one can find metrics $\phi_0$ on $M$ which are $\lambda^\varPhi$\mdash invariant on $U'$. Making use of the above orthogonal decomposition with respect to any such metric, we may promote $\lambda^\varTheta:\varGamma\mathbin|Z\to\GL(TZ)$ to a representation
\begin{equation*}
\begin{pmatrix}
 \lambda^\varTheta & 0
\\               0 & \nu_Z
\end{pmatrix}:\varGamma\mathbin|Z\longto\GL(TM\mathbin|Z),
\end{equation*}
which, by the invariance under $\lambda^\varPhi$ of the selected metric, will agree with $\lambda^\varPhi$ over $Z\cap U'$. Next, we may replace $M$ with a smaller open neighborhood $M'$ of $U'\cup Z$ so that we are able to extend this representation to a unital pseudo-rep\-re\-sen\-ta\-tion of $\varGamma\mathbin|M'$ on $TM'$. By averaging the metric $\phi_0$ along the invariant subset $Z$ with respect to this pseudo-rep\-re\-sen\-ta\-tion, we construct a metric $\phi_1$ on $M'$ whose restriction $\incl_Z^*\phi_1$ to $Z$ is $\lambda^\varTheta$\mdash invariant. By further averaging $\phi_1$ with respect to $\lambda^\varPhi$, we construct a metric $\phi_2$ on $M'\cap U$ which is both $\lambda^\varPhi$\mdash invariant and satisfies \[\incl_{U\cap Z}^*\phi_2=\incl_{U\cap Z}^*\phi_1.\] By using a partition of unity subordinated to the open cover \(M'\cap U\emphpunct*,M'\setminus\overline{U'}\) of \(M'\), we may finally glue $\phi_1$ together with $\phi_2$ in order to obtain a metric on $M'$ which is both $\lambda^\varPhi$\mdash invariant on $U'$ and $\lambda^\varTheta$\mdash invariant along $Z$, as desired.

\vskip\topsep\noindent{\it (Step~2.)} Let $Z'\subset Z$ be any invariant differentiable submanifold which is both \emph{homogeneous}\textemdash in the sense that it consists of orbits all of the same dimension\textemdash and such that $(C\cap Z)\cup Z'$ is closed within $Z$. We proceed to show that there must be some open neighborhood \(V'\subset M\) of $C\cup Z'$ together with some multiplicative connection $\varPsi'$ on $\varGamma\mathbin|V'\tto V'$ such that $\varPsi'$ coincides with $\varPhi$ near $C$ and such that $\varPsi'\mathbin|V'\cap Z=\varTheta\mathbin|Z\cap V'$.

At the expense of shrinking \(M\) around \(C\cup Z\) and \(U\) around \(C\), by following the same order of ideas as in the proof of Lemma~\ref{lem:14A.3.1}, we can construct some unital connection on \(\varGamma\tto M\) which coincides with \(\varPhi\) over \(U\), induces \(\varTheta\) along \(Z\), and is effective along \(Z'\). In detail, let us put $\varSigma:=s^{-1}(Z)$ and $\varSigma':=s^{-1}(Z')$. Let us consider an arbitrary splitting say $\sigma:(s^*TM)\mathbin|\varSigma\to T\varGamma\mathbin|\varSigma$ to the following epimorphism of vector bundles over $\varSigma$,
\begin{equation*}
	T\varGamma\mathbin|\varSigma\xto{\d s\mathbin|\varSigma}(s^*TM)\mathbin|\varSigma
\end{equation*}
with the property that \(\sigma_{1z}=T_z1\) for all \(z\in Z\) and which coincides with \(\eta^\varPhi\) over \(Z\cap U\) (such splittings can always be found, perhaps at the cost of shrinking $U$ around $C$ a bit). The vec\-tor-bun\-dle morphism $\eta^{\varTheta,\sigma}$ of $(s^*TM)\mathbin|\varSigma=(s\mathbin|\varSigma)^*(TM\mathbin|Z)=(s\mathbin|\varSigma)^*TZ\oplus(s\mathbin|\varSigma)^*NZ$ into $T\varGamma\mathbin|\varSigma$ given at all $g\in\varSigma$ by
\begin{equation*}
	\eta^{\varTheta,\sigma}_g:=\eta^\varTheta_g\circ\pr_{T_{sg}Z}+\sigma_g\circ\pr_{N_{sg}Z}:T_{sg}M=T_{sg}Z\oplus N_{sg}Z\to T_g\varGamma
\end{equation*}
evidently satisfies the condition $\d s\circ\eta^{\varTheta,\sigma}=\id$, is unital, coincides with $\eta^\varPhi$ over $Z\cap U$, and restricts to $\eta^\varTheta$ along $Z$. In general, its effect will not be a representation of $\varSigma\tto Z$ on $TM\mathbin|Z$. However, it will be possible to ``correct'' $\sigma$ so that the effect of $\eta^{\varTheta,\sigma}$ becomes a representation {\it along\/ $Z'$}. Namely, let us consider the following di\-rect-sum decomposition of differentiable vector bundles over $Z'$,
\begin{equation*}
	TM\mathbin|Z'=\varLambda'\oplus T'\oplus N'
\end{equation*}
where for every $z'\in Z'$ we set \(\varLambda'_{z'}:=\varLambda_{z'}\emphpunct, T'_{z'}:=(\varLambda'_{z'})^\bot\cap T_{z'}Z\), and \(N'_{z'}:=N_{z'}Z\); we stress that \(\varLambda'\) and \(T'\) must be differentiable vector bundles in virtue of the hypothesis that $Z'$ is homogeneous. Because of our thoughtful choice of metric {\it (Step~1)}, the effect of an arrow $g'\in\varSigma'$ under $\eta^{\varTheta,\sigma}$ expressed in matrix form relative to this decomposition will read
\begin{equation*}
	\left(%
\setlength\arraycolsep{.em}%
\renewcommand*{\arraystretch}{1.3}%
\begin{array}{ccccc}
	\lambda^\varTheta_{\varLambda'}(g')
\mkern8mu	&%
		&		0	&\vline%
					&\mkern8mu	\delta(g')
\\\cline{2-5}
	\makebox[.em]{$0$}
\mkern8mu	&\vline%
		&\mkern8mu	\lambda^\varTheta_{T'}(g')
			\mkern8mu	&\vline%
					&\mkern8mu	\mu(g')
\\\cline{1-4}
	\makebox[.em]{$0$}
\mkern8mu	&\vline%
		&		0	&%
					&\mkern8mu	\nu(g')
\end{array}\right):T_{sg'}M\to T_{tg'}M.
\end{equation*}
Let us pack the linear maps $\delta(g'):N'_{sg'}\to \varLambda'_{tg'}$ together into a morphism $\delta:(s\mathbin|\varSigma')^*N'\to(t\mathbin|\varSigma')^*\varLambda'$. As in the proof of Lemma~\ref{lem:14A.3.1}, we may lift $\delta$ to a $(\ker{\d s})\mathbin|\varSigma'$~valued morphism of vector bundles over $\varSigma'$. Now $\varSigma'\subset\varSigma$ is a differentiable submanifold, and $\delta$ vanishes on the units and over $Z'\cap U$, hence we can extend this lift of $\delta$ to a vec\-tor-bun\-dle morphism $\zeta:(s^*TM)\mathbin|\varSigma\to(\ker{\d s})\mathbin|\varSigma$ vanishing on the units and over $Z\cap U$. Let us replace $\sigma$ with $\sigma-\zeta$. With this new splitting, the matrix for the effect of \(g'\) will look like
\begin{equation*}
	\left(%
\setlength\arraycolsep{.44em}%
\renewcommand*{\arraystretch}{1.3}%
\begin{array}{c|cc}
	\lambda^\varTheta_{\varLambda'}(g')
	&	0
		&	0
\\\hline	0
	&	\lambda^\varTheta_{T'}(g')
		&	\mu(g')
\\	0
	&	0
		&	\nu(g')
\end{array}\right)
\end{equation*}
which defines a {\em representation}\/ of $\varSigma'\tto Z'$ on $TM\mathbin|Z'$. By a standard partition of unity argument, the partial vec\-tor-bun\-dle section $\eta^{\varTheta,\sigma-\zeta}$ of $L(s^*TM,T\varGamma)$ (defined over the submanifold $\varSigma$ of $\varGamma$) may be extended to a unital global section $\eta^H$ satisfying the condition $\d s\circ\eta^H=\id$ and coinciding with $\eta^\varPhi$ over a suitable open sub-do\-main of $U$. Hence the $H$ defined by $\eta^H$ will be a unital connection on $\varGamma\tto M$ with the desired properties.

So, let \(H\) be a unital connection on \(\varGamma\tto M\) which coincides with \(\varPhi\) over \(U\), induces \(\varTheta\) along \(Z\), and is effective along \(Z'\). By Proposition~\ref{prop:14A.4.2}, there must be some open neighborhood $V'$ of $C\cup Z'$ over which $H$ is nearly effective. By confining attention to the restriction of $\varGamma$ over $V'$, we may without loss of generality suppose that $H$ is nearly effective over the whole $M$ in other words that $V'=M$.

If we choose a random normalized Haar system say $\nu$ on $\varGamma\tto M$, the averaging limit connection $\hat H^{(\infty)}$ relative to $\nu$ will be multiplicative and induce $\varTheta$ along $Z$. However, it will not in general restrict to $\varPhi$ over a neighborhood of $C$. To accomplish this, we must select $\nu$ more carefully. Let us pick an arbitrary open set $U'$ such that $C\subset U'\subset\overline{U'}\subset U$ and such that $\varGamma\overline{U'}$ is closed. Let us consider the open set
\begin{equation*}
	W:=U\cup(M\setminus\varGamma\overline{U'}).
\end{equation*}
Clearly, $M=\varGamma W$. If we choose $\nu$ subordinated to $W$, now, the restriction over $U'$ of the averaging limit connection $\hat H^{(\infty)}$ relative to $\nu$ will coincide with $\varPhi\mathbin|U'$, as desired.

\vskip\topsep\noindent{\it (Step~3.)} The above argument constitutes enough of a proof if $Z$ itself is homogeneous, in which case we take $Z':=Z$. For the general case, we need to argue a bit further. Let us set \(C_0:=C\emphpunct, U_0:=U\) and, inductively, $C_{q+1}:=C_q\cup Z_q$. We have that $C_q\cap Z=(C\cap Z)\cup Z_0\cup\dotsb\cup Z_{q-1}$ is a closed subset of $Z$ and therefore that \(C_q\) is a closed subset of \(M\), since by hypothesis so was \(C\cup Z\) {\it (Step~0)}. We proceed by induction on $q$. Suppose we have found some open neighborhood $U_q$ of $C_q$ along with some multiplicative connection $\varPsi_q$ on $\varGamma\mathbin|U_q\tto U_q$ such that $\varPsi_q$ coincides with $\varPhi$ near $C$ and such that $\varPsi_q\mathbin|U_q\cap Z=\varTheta\mathbin|Z\cap U_q$. Then, by {\it Step~2}\/ applied to $C_q$ and $Z_q$, we can find some open neighborhood $U_{q+1}$ of $C_{q+1}$ together with some multiplicative connection $\varPsi_{q+1}$ on $\varGamma\mathbin|U_{q+1}\tto U_{q+1}$ such that $\varPsi_{q+1}$ coincides with $\varPsi_q$ near $C_q$ (hence near $C$) and such that $\varPsi_{q+1}\mathbin|U_{q+1}\cap Z=\varTheta\mathbin|Z\cap U_{q+1}$. Proceeding in this way we eventually (for $q$ big enough) reach some open neighborhood $U_q$ of $C\cup Z$ which has the properties required of $V$ in the statement of the theorem.

\section{More examples}\label{sec:16a.8}

In \S\ref{sec:16a.1}, we have explained how the extension problem~\ref{prob:16a.1.9} for a general proper Lie groupoid \(\varGamma\tto M\) reduces to the case when \(S=\ast\sidetext(\text{pa\-ram\-e\-ter-free case})\) and \(\varGamma\tto M\) is also regular. In the present section, we focus on the latter special case of the problem, which we restate below for convenience:

\begin{prob}\label{prob:16a.8.1} Given any Lie groupoid \(\varGamma\tto M\) that is both proper and regular, any pair of open sets \(U,V\subset M\) such that \(\overline U\subset V\), and any multiplicative connection \(\varPhi\in\Mcon(\varGamma\mathbin|V)\) defined over \(V\), what are the obstructions to extending \(\varPhi\mathbin|U\in\Mcon(\varGamma\mathbin|U)\) to a connection which is both globally defined on \(\varGamma\tto M\) and multiplicative? \end{prob}

From Lemma~\ref{lem:14A.3.1} (or better, the commentary following its proof) in combination with Proposition~\ref{prop:12B.11.7}, one deduces immediately that \(\varPhi\mathbin|U\in\Mcon(\varGamma\mathbin|U)\) can be extended to a multiplicative connection defined on all of \(\varGamma\tto M\) if, and only if, its longitudinal effect \(\lambda^{\varPhi\mathbin|U}_\varLambda\in\Rep(\varGamma\mathbin|U;\varLambda\mathbin|U)\) can be extended to a longitudinal representation of the whole \(\varGamma\tto M\). The argument behind this statement makes essential use of the averaging operator for groupoid connections. In fact, the truth of this statement depends in a substantial way on the hypothesis that \(\varGamma\tto M\) is proper (compare Example~\ref{npar:14A.3.4}). The extension problem~\ref{prob:16a.8.1} is then equivalent to the following (a~priori simpler) problem:

\begin{prob}\label{prob:16a.8.2} Let \(\varGamma\tto M\emphpunct, U,V\subset M\) be as in the statement of Problem~\ref{prob:16a.8.1}. Suppose that \(\alpha\in\Rep(\varGamma\mathbin|V;\varLambda\mathbin|V)\) is a longitudinal representation of \(\varGamma\mathbin|V\tto V\). Under what conditions is it possible to extend \(\alpha\mathbin|U\in\Rep(\varGamma\mathbin|U;\varLambda\mathbin|U)\) to some longitudinal representation of all of \(\varGamma\tto M\)? \end{prob}

We now describe how Problem~\ref{prob:16a.8.2} can be rephrased as a standard problem in equivariant (orbifold) obstruction theory. This shift of perspective will offer us potential clues as to how Problem~\ref{prob:16a.8.2} (and therefore Problem~\ref{prob:16a.1.9}) could be approached in general. The findings of some preliminary investigations that we have conducted in this respect, along with several details that will be omitted in the course of the subsequent discussion, have been collected in \cite{2016b}. The development of a comprehensive obstruction theory for longitudinal representations of proper regular groupoids\textemdash with applications to classical problems in geometry\textemdash is the subject of ongoing research, and will not be discussed further in this paper.

Throughout the sequel, \(\varGamma\tto M\) shall be an arbitrary, yet fixed, proper regular groupoid, unless otherwise specified. Let \(\nef\varGamma\) denote the totally isotropic normal subgroupoid of \(\varGamma\) formed by all its ineffective arrows (recall that an isotropic arrow \(g\in\varGamma^x_x\) is \emph{ineffective} if its intrinsic \emph{infinitesimal effect} on the quotient tangent space \(T_xM\mathbin/\varLambda_x\) is trivial; cf.~\cite[\S 1]{Tre6}). By regularity and properness, this subgroupoid is of necessity smooth and closed. As a consequence,~\cite[\S A]{Tre6} the quotient groupoid
\begin{equation*}
	P=\varGamma/\nef\varGamma\tto M
\end{equation*}
inherits a natural {\em Lie}\/ groupoid structure, making it a smooth, proper, foliation groupoid. The groupoid composition law \(\varGamma\ftimes st\varGamma\to\varGamma\) descends to a Lie-group\-oid action \(\varGamma\ftimes stP\to P\). Let
\begin{equation*}
	\varPi=\varGamma\ltimes P\tto P
\end{equation*}
stand for the Lie groupoid of translations corresponding to this action. The latter groupoid is itself proper and regular. Notice that the \(\varPi\)\mdash orbits coincide with the source fibers \(P^x\) for \(P\tto M\). Let \(\varPi^x=\varPi\mathbin|P^x\tto P^x\) indicate the restriction of \(\varPi\tto P\) over an orbit \(P^x\), and
\begin{equation*}
	\mathscr R^\varGamma_x=\Rep(\varPi^x;TP^x)
\end{equation*}
the space of all tangent representations of \(\varPi^x\tto P^x\). We are going to regard the spaces \(\mathscr R^\varGamma_x\sidetext(x\in M)\) as the fibers of a set-the\-o\-ret\-ic fiber bundle \(\mathscr R^\varGamma\) over \(M\). Right translation by arrows of \(\varGamma\) induces a well-de\-fined groupoid action \(P\times_M\mathscr R^\varGamma\to\mathscr R^\varGamma\). There is a natural \(C^\infty\)\mdash structure on \(\mathscr R^\varGamma\) which makes the fi\-ber-bun\-dle projection \(\mathscr R^\varGamma\to M\) and the groupoid action \(P\times_M\mathscr R^\varGamma\to\mathscr R^\varGamma\) into \(C^\infty\) mappings and for which the \(C^\infty\) sections to \(\mathscr R^\varGamma\to M\) over any open subset \(U\) of \(M\) correspond to the longitudinal representations of the restriction of \(\varPi\tto P\) over \(P^U=\bigcup_{u\in U}P^u\subset P\). Also, for \(U\) invariant, the {\em equivariant}\/ \(C^\infty\) sections to \(\mathscr R^\varGamma\to M\) over \(U\) correspond precisely to the inverse images, along the natural projection of \(\varPi\mathbin|P^U\tto P^U\) onto \(\varGamma\mathbin|U\tto U\), of the elements of \(\Rep(\varGamma\mathbin|U;\varLambda\mathbin|U)\). We therefore see that for \(U,V\subset M\) invariant Problem~\ref{prob:16a.8.2} can be rephrased as follows:

\begin{prob}\label{prob:16a.8.3} Let \(U,V\subset M\) be invariant open sets such that \(\overline U\subset V\). Let \(\rho\in\Gamma^\infty(V;\mathscr R^\varGamma)^P\) be an arbitrary equivariant local \(C^\infty\) section to \(\mathscr R^\varGamma\to M\) defined over \(V\). What are the obstructions to extending \(\rho\mathbin|U\in\Gamma^\infty(U;\mathscr R^\varGamma)^P\) to a global equivariant \(C^\infty\) section of \(\mathscr R^\varGamma\)? \end{prob}

We refer to the common dimension of the orbits of any regular groupoid as its \emph{rank}. Notice that the rank of \(\varGamma\tto M\) is the same as the rank of \(\varPi\tto P\). Under relatively mild assumptions on \(\varGamma\tto M\) (such as for instance source properness) the pullback \(i^*\mathscr R^\varGamma\to S\) of the \(P\tto M\)~equivariant \(C^\infty\) fibration \(\mathscr R^\varGamma\to M\) along an arbitrary completely transversal submanifold \(i:S\to M\) of dimension complementary to the rank of \(\varGamma\tto M\) turns out to be \(i^*P\tto S\)~equivariantly locally trivial. From a differentiable stacks perspective, this means that the ``orbibundle'' \([\mathscr R^\varGamma/P]\to [M/P]\) is locally trivial. Accordingly, Problem~\ref{prob:16a.8.3} may be understood as asking when, for this locally trivial fibration of orbifold stacks, a given partially defined section is extendable to a global section. This is a familiar type of question in obstruction theory. Since the base \([M/P]\) of our orbifold fibration \([\mathscr R^\varGamma/P]\to[M/P]\) is a fi\-nite-di\-men\-sion\-al object, the classical methods of equivariant obstruction theory might be adapted to the study of Problem~\ref{prob:16a.8.3}. We intend to explore this order of ideas in a future paper.

\begin{exmp}\label{exmp:16a.8.4} In the rank-ze\-ro situation, there are no obstructions to solving Problem~\ref{prob:16a.8.3}; indeed, for every \(x\in M\) one has \(\dim P^x=0\) and therefore \(\mathscr R^\varGamma_x=\{\ast\}\). \end{exmp}

\begin{exmp}[see Example~1.9 of \cite{2016b}]\label{exmp:16a.8.5} When the rank is equal to \(1\), it might not always be possible to solve Problem~\ref{prob:16a.8.3}, as our counterexample \ref{npar:14A.6.9} illustrates. However, in the situation of connected source fibers, there are again no obstructions; the short argument goes as follows. Let us fix any \(P\tto M\)~invariant vec\-tor-bun\-dle metric on the longitudinal tangent distribution of \(\varPi\tto P\) (equivalently, any vec\-tor-bun\-dle metric on the longitudinal tangent distribution of \(\varGamma\tto M\)). Because of the rank~=~1 hypothesis and of the source connectedness of \(\varPi\tto P\), for every \(x\in M\) there will be exactly one representation, say, \(\sigma_x\) of \(\varPi^x\tto P^x\) on \(TP^x\) which is orthogonal for the chosen metric. These representations \(\sigma_x\) together must give a global equivariant \(C^\infty\) section \(\sigma\) of \(\mathscr R^\varGamma\). Now, because of source connectedness, for each \(h\in\varPi^V\) there will be a unique number \(c(h)>0\) such that \(\rho(h)=c(h)\sigma(h)\). By properness, this number will only depend on the pair \((sh,th)\in P\times P\). Then, for any choice of a smooth invariant function \(\varphi\) on \(M\) with \(\varphi\mathbin|U=1\) and \(\supp\varphi\subset V\), an equivariant global \(C^\infty\) prolongation \(\tilde\rho\) of \(\rho\mathbin|U\) can be obtained by setting
\begin{equation*}
	\tilde\rho(h)=\exp\bigl(\varphi(th)\log c(h)\bigr)\sigma(h).
\end{equation*} \end{exmp}

\begin{exmp}[see Theorem~6.2 of \cite{2016b}]\label{exmp:16a.8.6} The obstructions become slightly more interesting when the rank is two. Suppose that \(\varGamma\tto M\) is source proper, source connected, and that its rank is two. Further suppose that its source fibers have finite fundamental groups. Then, for any given \(U\), \(V\), and \(\rho\), the extension problem \ref{prob:16a.8.3} admits solutions if, and only if, for every base point \(x\) the four conditions below, concerning the long exact sequence of homotopy groups associated with the pointed fibration \((\varGamma^x_x,1_x)\xto\subset(\varGamma^x,1_x)\xto t(\varGamma x,x)\), are simultaneously verified, where \(\free\pi_1(-)\) denotes the fundamental group of a Lie group modulo torsion:
\begin{enumerate}
\def\labelenumi{\rm\alph{enumi})}%
\itemsep=0pt%
 \item \(\pi_2(\varGamma^x)=0\).
 \item \(F:=\im{}\bigl(\pi_2(\varGamma x)\xto{\partial_2}\pi_1(\varGamma^x_x)\xto\pr\free\pi_1(\varGamma^x_x)\bigr)\) sits inside \(\free\pi_1(\varGamma^x_x)\) so that \(\Z e\cap F\neq 0\seq 2e\in F\) for all \(e\).
 \item \(\partial_1:\pi_1(\varGamma x)\to\pi_0(\varGamma^x_x)\) is an isomorphism of groups.
 \item \(\free\pi_1(c_g)=-\id\in\Aut\bigl(\free\pi_1(\varGamma^x_x)\bigr)\) for any \(g\in\varGamma^x_x\smallsetminus\idc{\varGamma^x_x}\), where \(c_g\in\Aut(\varGamma^x_x)\) stands for conjugation by \(g\) and \(\idc{\varGamma^x_x}\) indicates the identity component of \(\varGamma^x_x\).
\end{enumerate} \end{exmp}

\begin{exmp}[see Corollary~6.4 of \cite{2016b}]\label{exmp:16a.8.7} Combining \ref{exmp:16a.8.4}\textendash \ref{exmp:16a.8.6} with Theorem~\ref{prop:14A.5.2}, one gets the following result\emphpunct: Let \(\varGamma\tto M\) be a Lie groupoid which is source proper, source connected, and whose source fibers have finite fundamental groups. Suppose that \(\dim\varGamma x\leq 2\) for all \(x\in M\), and that for every \(x\in M_2\) the conditions a)~to d) of the preceding example are satisfied. Then the \(C^\infty\)\mdash space \(\Mcon(\varGamma)\) is non-emp\-ty and \(C^\infty\)\mdash path-con\-nect\-ed. \end{exmp}
\appendix

\section{Uniform convergence topologies on spaces of sections}\label{sec:16a.A}

The material collected in this appendix is relevant for Sections~\ref{sec:16a.5}~and \ref{sec:16a.6}. We assume familiarity with a few very basic concepts from the theory of topological vector spaces\textemdash such as, for instance, the notion of \emph{locally convex topology generated by a family of seminorms} or the notion of \emph{Fr\'e\-chet space}\textemdash which are thoroughly discussed e.g.~in \cite[Chapter~2]{Schaef}. Throughout the appendix, \(X\) is going to denote a smooth manifold, and \(E\) a smooth vector bundle over \(X\). We are also going to let \(0\leq k\leq\infty\) denote an arbitrary, but fixed, order of differentiability.

We say that a cross-sec\-tion \(\xi:S\to E\) defined over an arbitrary subset \(S\subset X\) \emph{is of class\/ \(C^k\)} if for each point \(x\in S\) there is some open neighborhood \(B\) of \(x\) in \(X\) over which \(\xi\mathbin|S\cap B\) admits some extension to a cross-sec\-tion of \(E\) of class \(C^k\); notice that `class~\(C^0\)' does not agree with `continuous', in general, unless \(S\) is locally closed. We let \(\Gamma^k(S;E)\) denote the vector space formed by all the cross-sec\-tions of \(E\) over \(S\) of class \(C^k\). For any subset \(T\) of \(S\) we let
\begin{equation}
\label{eqn:16a.A.1}
	\res^S_T:\Gamma^k(S;E)\longto\Gamma^k(T;E)
\end{equation}
denote the linear map given by restriction from \(S\) to \(T\).

Let \(\varphi:U\approxto\varphi U\subset\R^n\) be any local \(C^\infty\) coordinate chart for \(X\). Also let \(\tau:E\mathbin|U\simto U\times\K^N\) be any \(C^\infty\) vec\-tor-bun\-dle trivialization for \(E\) over the domain of definition of the chart. We can express an arbitrary cross-sec\-tion \(\xi\in\Gamma^k(S;E)\) locally over \(S\cap U\) in terms of its component functions relative to \(\tau,\varphi\):
\begin{equation}
\label{eqn:12B.7.1}
	\xi^{\tau,\varphi}:=(\xi^{\tau,\varphi}_1,\dotsc,\xi^{\tau,\varphi}_N):=\pr_2\circ\tau\circ\xi\circ\varphi^{-1}:\varphi(S\cap U)\to\K^N.
\end{equation}
For every multi-in\-dex \(\alpha=(\alpha_1,\dotsc,\alpha_n)\in\N^n\) of order \(\abs\alpha:=\alpha_1+\dotsb+\alpha_n\leq k\) and for every function \(f:V\to\K\) of class \(C^k\) which is defined on some open domain \(V\subset\R^n\), we adopt the customary notational shorthand \(\partial^\alpha f:=\dfrac{\partial^{\alpha_1}}{\partial t_1^{\alpha_1}}\dotsm\dfrac{\partial^{\alpha_n}}{\partial t_n^{\alpha_n}}f\) (conventionally we set \(\dfrac{\partial^0}{\partial t_i^0}f:=f\)). Let \(K\) be an arbitrary compact subset of \(U\). For every natural number \(r\leq k\) and for every global cross-sec\-tion \(\xi\in\Gamma^k(X;E)\) we set
\begin{equation}
\label{eqn:12B.7.3}
	p^{\tau,\varphi,K,r}(\xi):=\max_{I=1,\dotsc,N}\left.\max_{\alpha\in\N^n,\abs\alpha\leq r}\left.\sup_{x\in K}\left.\within*|\partial^\alpha\xi^{\tau,\varphi}_I(\varphi x)|.\right.\right.\right.
\end{equation}
Evidently, this expression defines a seminorm \(p^{\tau,\varphi,K,r}\) on the vector space \(\Gamma^k(X;E)\). The \emph{topology of\/ \(k\)\mdash th~order local uniform convergence}\textemdash shortly, \emph{\(C^k\)\mdash topology}\textemdash on \(\Gamma^k(X;E)\) is the locally convex topology generated by all such seminorms \(p^{\tau,\varphi,K,r}\). Since \(E\) is locally trivial and \(X\) is locally compact, this topology is necessarily separated (i.e.~Hausdorff). The \(C^k\)\mdash topology makes \(\Gamma^k(X;E)\) into a Fr\'e\-chet space~(= complete, metrizable, locally convex, topological vector space). When \(X\) is compact and \(k\) is finite, the Fr\'e\-chet space \(\Gamma^k(X;E)\) turns out to be normable.

In practice, we want to work with a slightly more flexible notion of `generating seminorm for the \(C^k\)\mdash topology'. By a \emph{continuous norm} \(p\) on \(E\), we shall mean the datum, on each vec\-tor-bun\-dle fiber \(E_x\), of a norm \(p_x:E_x\to\R_{\geq0}\) depending on \(x\) in such a manner that the function on \(E\) defined by \(e\mapsto p_{\pr^E_Xe}(e)\) is continuous. For example, every (Riemannian or Hermitian, depending on whether \(E\) is real or complex) metric \(\phi\) on \(E\) gives rise to a continuous norm on \(E\) defined on each fiber \(E_x\) by the rule \(e\mapsto\sqrt{\phi_x(e,e)}\); also, given a continuous norm \(p\) on \(E\), and given a similar norm \(q\) on a second vector bundle \(F\) over \(X\), on the hom vector bundle \(L(E,F)\) there is a continuous norm defined on each fiber \(L(E,F)_x=L(E_x,F_x)\) by the rule \(\lambda\mapsto\sup_{p_x(e)=1}q_x(\lambda e)\).

\begin{lem*} Let\/ \(\xi_1,\dotsc,\xi_N\) be a local frame for\/ \(E\) defined over some open set\/ \(U\subset X\). Let\/ \(K\) be a compact subset of\/ \(U\). Let\/ \(p\) be a continuous vec\-tor-bun\-dle norm on\/ \(E\mathbin|U\). Then, there exists some constant\/ \(c>0\) such that if for any cross-sec\-tion\/ \(\xi\in\Gamma^0(U;E)\) we write\/ \(\xi=\sum_Ia_I\xi_I\) with\/ \(a_I\in C^0(U)\) then\/ \(\max_I\Norm{a_I}_K\leq c\sup_{x\in K}p_x\bigl(\xi(x)\bigr)\). \end{lem*}

\begin{proof} Put
\begin{equation*}
	c^{-1}=\inf_{x\in K}\left.\inf_{\abs{z_1}^2+\dotsb+\abs{z_N}^2=1}p_x\bigl(z_1\xi_1(x)+\dotsb+z_N\xi_N(x)\bigr).\right.
\end{equation*}
Whenever \(\abs{a_I(x)}>0\) for some point \(x\in K\) and for some index \(I\), say \(I=1\), put \(b_I:=a_I(x)/a_1(x)\) for all \(I\) and let \(\rho^2:=1+\abs{b_2}^2+\dotsb+\abs{b_N}^2\geq1\). Then \((1/\rho)^2+\abs{b_2/\rho}^2+\dotsb+\abs{b_N/\rho}^2=1\), whence \(c^{-1}\leq\rho c^{-1}\leq p_x\bigl(\xi_1(x)+b_2\xi_2(x)+\dotsb+b_N\xi_N(x)\bigr)\) and therefore \(c^{-1}\abs{a_1(x)}\leq p_x\bigl(\xi(x)\bigr)\). \end{proof}

Now, let \(\varOmega\) be any relatively compact open subset of \(X\); by definition, its closure \(\overline\varOmega\) is compact. Let \(p\) be any continuous norm on \(E\). Consider \(E\) as endowed with \(p\), and write \(E=(E,p)\). Finally, let \(\matheusm A=\{(\varphi_i,\tau_i)\}\) be a trivializing atlas for \(E\) consisting of local \(C^\infty\) coordinate charts \(\varphi_i:U_i\approxto\R^n\) and \(C^\infty\) vec\-tor-bun\-dle trivializations \(\tau_i:E\mathbin|U_i\simto U_i\times\K^N\). For every index \(i\), put \(B_i:=\varphi_i^{-1}\bigl(B_1(0)\bigr)\), and for every point \(u\in U_i\), let \(\norm{-}_{i,u}\) indicate the norm on \(\mathbb K^N\) corresponding to \(p_u\) under the linear bijection \(\tau_{i,u}:E_u\simto\mathbb K^N\). Assume that \(\matheusm A\) is \emph{locally finite}, in the sense that the open sets \(B_i\) form a locally finite cover of \(X\). Then, for any non-neg\-a\-tive integer \(r\) and for any cross-sec\-tion \(\xi\in\Gamma^r(\overline\varOmega;E)\), put
\begin{equation}
\label{eqn:12B.13.3}
	\Norm\xi_{C^r\overline\varOmega;E,\matheusm A}:=\max_i\left.\max_{\alpha\in\N^n,\abs\alpha\leq r}\left.\sup_{u\in B_i\cap\varOmega}\left.\within*|\partial^\alpha\xi^{\tau_i,\varphi_i}(\varphi_iu)|_{i,u}.\right.\right.\right.
\end{equation}
We call the function \(\Norm{-}_{C^r\overline\varOmega;E,\matheusm A}\) thus defined on \(\Gamma^r(\overline\varOmega;E)\) a \emph{standard\/ \(C^r\)\mdash norm}. As an easy consequence of the previous lemma, we see that any two standard \(C^r\)\mdash norms on \(\Gamma^r(\overline\varOmega;E)\) are equivalent. We refer to the (normable) locally convex topology on \(\Gamma^r(\overline\varOmega;E)\) generated by any standard \(C^r\)\mdash norm as the \emph{\(C^r\)\mdash norm topology}. By a \emph{\(C^r\)\mdash norm} on \(\Gamma^r(\overline\varOmega;E)\) we simply mean any norm which is equivalent to some standard \(C^r\)\mdash norm.

\begin{lem}\label{cor:12B.13.5} Let\/ \(X\) be a smooth manifold and let\/ \(E\) be a smooth vector bundle over\/ \(X\). Then for an arbitrary relatively compact open subset\/ \(\varOmega\subset X\) and an arbitrary non-neg\-a\-tive integer\/ \(r\leq k\leq\infty\) the restriction map\/ \(\res^X_{\overline\varOmega}:\Gamma^k(X;E)\to\Gamma^r(\overline\varOmega;E)\) is continuous relative to the\/ \(C^k\)\mdash topology on the first space and to the\/ \(C^r\)\mdash norm topology on the second space. If\/ \(\mathcal U\) is any open cover of\/ \(X\) by relatively compact open subsets, the\/ \(C^k\)\mdash topology on\/ \(\Gamma^k(X;E)\) coincides with the weak topology induced by the family of linear maps
\begin{equation*}
	\{\res^X_{\overline\varOmega}:\Gamma^k(X;E)\to\Gamma^r(\overline\varOmega;E)\mathrel|\varOmega\in\mathcal U\emphpunct, 0\leq r\text{ finite integer}\leq k\}\text;
\end{equation*}
in particular, a sequence\/ \(\{\xi_i\}\) of global cross-sec\-tions of\/ \(E\) is Cauchy within\/ \(\Gamma^k(X;E)\) if, and only if, so is the sequence\/ \(\{\res^X_{\overline\varOmega}(\xi_i)\}\) within\/ \(\Gamma^r(\overline\varOmega;E)\) for every\/ \(\varOmega\in\mathcal U\emphpunct, r\leq k\). \end{lem}

\begin{lem}\label{lem:12B.13.9} Let\/ \(\omega:E\to F\) be a fiberwise linear morphism between two smooth vector bundles\/ \(E\), \(F\) over a manifold\/ \(X\). For any relatively compact open subset\/ \(\varOmega\subset X\) and for any non-neg\-a\-tive integer\/ \(r\), the following linear map is\/ \(C^r\)\mdash continuous.
\begin{equation}
\label{eqn:12B.13.14}
	\Gamma^r(\overline\varOmega;E)\longto\Gamma^r(\overline\varOmega;F)\emphpunct, \xi\mapsto\omega\circ\xi
\end{equation} \end{lem}

For any smooth mapping \(f:Y\to X\), the pullback vector bundle \(f^*E\) has the fiber product \(Y\times_XE\) as its total manifold and the first projection \(Y\times_XE\to Y\) as its bundle projection onto \(Y\). By the universal property of the fiber product, for each \(C^k\) cross-sec\-tion \(\xi\) of \(E\) there exists a unique \(C^k\) cross-sec\-tion \(f^*\xi\) of \(f^*E\) such that \(\pr_E\circ f^*\xi=\xi\circ f\), where \(\pr_E\) denotes the projection \(Y\times_XE\to E\).

\begin{lem}\label{lem:12B.13.10} Let\/ \(f:Y\to X\) be a smooth mapping and let\/ \(E\) be a smooth vector bundle over\/ \(X\). Then, for any relatively compact open subset\/ \(\varOmega\subset X\) and for any similar subset\/ \(O\subset Y\) such that\/ \(f(O)\subset\varOmega\), the pullback operation on cross-sec\-tions gives rise for each non-neg\-a\-tive integer\/ \(r\) to a\/ \(C^r\)\mdash continuous linear map
\begin{equation}
\label{eqn:12B.13.15}
	\Gamma^r(\overline\varOmega;E)\longto\Gamma^r(\overline O;f^*E)\emphpunct, \xi\mapsto f^*\xi\mathbin|\overline O.
\end{equation} \end{lem}

We proceed to describe a notational device which will spare us the nuisance of keeping track of irrelevant scaling factors throughout. Let \(\mathscr S\) be an arbitrary set. We introduce a binary relation \(\preceq\) on the set \(\Func_{\geq0}(\mathscr S)\) of all non-neg\-a\-tive real valued functions on \(\mathscr S\) by defining \(f\preceq g\) to mean \em there exists some constant\/ \(C>0\) such that\/ \(f\leq Cg\)\em. Since this binary relation is reflexive and transitive, setting \(f\equiv g\aeq(f\preceq g\et g\preceq f)\) gives rise to an equivalence relation \(\equiv\) on \(\Func_{\geq0}(\mathscr S)\). Note that \(\preceq\) descends to a partial order \(\leq\) on the set of all \(\equiv\)~equivalence classes of functions. Also note that for \(f,g,h\in\Func_{\geq0}(\mathscr S)\)
\[f\equiv g\text{ entails }f+h\equiv g+h\text{ and }fh\equiv gh.\]
In addition, notice that if \(\lambda:\mathscr T\to\mathscr S\) is any mapping then \(f\equiv g\in\Func_{\geq0}(\mathscr S)\) implies \(f\circ\lambda\equiv g\circ\lambda\in\Func_{\geq0}(\mathscr T)\). Hence, the operations of sum, product, and pullback make sense for \(\equiv\)~equivalence classes of functions. Now, for \(E\), \(\varOmega\) and \(r\) as before, let \(\Norm{-}_{C^r\overline\varOmega;E}\) (or simply \(\Norm{-}_{C^r\overline\varOmega}\) or even \(\within\|-\|_{C^r}\) when there is no risk of confusion) denote the \(\equiv\)~class of any \(C^r\)\mdash norm within \(\Func_{\geq0}\bigl(\Gamma^r(\overline\varOmega;E)\bigr)\).

\begin{lem}\label{lem:12B.13.7} Let\/ \(E\), \(F\) and\/ \(G\) be any three smooth vector bundles over a given manifold\/ \(X\). Let\/ \(\varOmega\) be an arbitrary relatively compact open subset of\/ \(X\). Let\/ \(\eta\) denote a variable ranging over\/ \(\Gamma^r\bigl(\overline\varOmega;L(E,F)\bigr)\) and\/ \(\vartheta\) one ranging over\/ \(\Gamma^r\bigl(\overline\varOmega;L(F,G)\bigr)\) for some finite order of differentiability\/ \(r\geq 0\). Then, the following two estimates hold (the second of which for\/ \(r\geq 1\)), where\/ \(\vartheta\circ\eta\) stands for the cross-sec\-tion of\/ \(L(E,G)\) obtained by composing\/ \(\vartheta\) with\/ \(\eta\) pointwise.%
\begin{subequations}
\label{eqn:12B.13.5}
\begin{gather}
	\within\|\vartheta\circ\eta\|_{C^r}\leq\within\|\vartheta\|_{C^r}\within\|\eta\|_{C^r}
\label{eqn:12B.13.5a}
\\	\within\|\vartheta\circ\eta\|_{C^r}\leq\within\|\vartheta\|_{C^r}\within\|\eta\|_{C^{r-1}}+\within\|\vartheta\|_{C^{r-1}}\within\|\eta\|_{C^r}
\label{eqn:12B.13.5b}
\end{gather}
\end{subequations} \end{lem}

\begin{lem}\label{lem:12B.13.8} Let\/ \(E\), \(F\) be any two smooth vector bundles over a given manifold\/ \(X\). Let\/ \(\varOmega\) be an arbitrary relatively compact open subset of\/ \(X\). Let\/ \(\eta\) denote a variable ranging over\/ \(\Gamma^r\bigl(\overline\varOmega;\Lis(E,F)\bigr)\) for some finite order of differentiability\/ \(r\), where\/ \(\Lis(E,F)\) indicates the open subset of\/ \(L(E,F)\) formed by all the linear isomorphisms. Then, letting\/ \(\eta^{-1}\) stand for the cross-sec\-tion of\/ \(L(F,E)\) obtained by inverting\/ \(\eta\) pointwise, providing that\/ \(r\geq 1\),
\begin{equation}
\label{eqn:12B.13.9}
	\within*\|\eta^{-1}\|_{C^r}\leq\within*(\within*\|\eta^{-1}\|_{C^{r-1}})^2\within\|\eta\|_{C^r}.
\end{equation} \end{lem}

\section{Haar integrals depending on parameters}\label{sec:16a.B}

The present appendix is a continuation of the preceding one. Again the material collected here is mostly needed in Sections~\ref{sec:16a.5}~and \ref{sec:16a.6}. The classical sources on Haar integration over (topological or Lie) groupoids include \cite{Pater,Ren,Tu}.

Let \(p:Y\to X\) be any smooth and submersive mapping. We say that a subset \(S\subset Y\) is \emph{properly located} (\emph{with respect to\/ \(p\)}) if the restriction of \(p\) to \(S\) is a proper mapping. We say that a function \(\alpha\) on \(Y\) is \emph{properly supported} if its support \(\supp\alpha\) is properly located. By a (\emph{volume}) \emph{density along the fibers} of \(p\), we mean any global (non-van\-ish\-ing) smooth section of the density (line) bundle associated with the vertical subbundle \(\ker Tp\subset TY\). For any such density \(\delta\), the following two statements are true:%
\begin{enumerate}
\def\labelenumi{\upshape(\roman{enumi})}
 \item If \(\alpha\) is any properly supported \(C^k\) function on \(Y\), the function \({\int}\alpha\delta\) on \(X\) obtained by integration along the fiber is also of class \(C^k\).
 \item For any properly located subset \(S\subset Y\), the operation of integration along the fiber gives rise to a \(C^k\)\mdash continuous linear map \(C^k_S(Y)\to C^k(X)\emphpunct, \alpha\mapsto{\int}\alpha\delta\), where \(C^k_S(Y)\) denotes the closed linear subspace of \(C^k(Y)\) formed by those functions \(\alpha\) such that \(\supp\alpha\subset S\).
\end{enumerate}
(Both claims are clear when \(Y\xto pX\) is the projection \(\R^m\times\R^n\xto\pr\R^m\); the general case is seen to reduce to this special case by a straightforward partition of unity argument based on Lemmas~\ref{cor:12B.13.5}\textendash\ref{lem:12B.13.10}.)

\begin{lem}\label{lem:12B.10.6} For any normalized Haar system\/ \(\nu\) on a proper Lie groupoid\/ \(\varGamma\tto M\) and any choice of ``parameter data'' \(\{P\xto fM\emphpunct, E\}\) the Haar integration functional\/ \(\vartheta\mapsto\integral\vartheta d\nu\) given by\/ \eqref{eqn:12B.10.6a} is a\/ \(C^k\)\mdash continuous linear map of\/ \(\Gamma^k(P\ftimes ft\varGamma\emphpunct*;\pr_P^*E)\) into\/ \(\Gamma^k(P;E)\). \end{lem}

\begin{proof} Since the integration can be done componentwise relative to any local vec\-tor-bun\-dle trivialization for \(E\) over the domain of definition of a local coordinate chart for \(P\), our task reduces at once to the case of functions (``trivial coefficients'').

Let \(\tau\) and \(c\) be a volume density along the target fibers and a non-neg\-a\-tive function on \(M\) as in the definition of `normalized Haar system' reviewed in \S\ref{sec:16a.1}. The projection map \(\pr_P:P\ftimes ft\varGamma\to P\) is a surjective submersion; also, the tangent map of the other projection \(\pr_\varGamma:P\ftimes ft\varGamma\to\varGamma\) induces an identification of vector bundles \(\ker(T\pr_P)\cong\pr_\varGamma^*(\ker Ts)\) which makes it possible to regard the pullback \(\pr_\varGamma^*\tau\) as a volume density along the fibers of \(\pr_P\). Now, the Haar integration map on functions can be expressed as the composition of two linear maps which by Lemma~\ref{lem:12B.13.9} and the above statement (ii) are already known to be \(C^k\)\mdash continuous, namely,
\begin{equation*}
	C^k(P\ftimes ft\varGamma)
 \mathrel{\xymatrix@1@C=4.67em{\ar[r]^{\alpha\,\mapsto\,(c\circ s\circ\pr_\varGamma)\alpha}&}}
	C^k_{\supp c\circ s\circ\pr_\varGamma}(P\ftimes ft\varGamma)
 \mathrel{\xymatrix@1@C=3.67em{\ar[r]^{\alpha\,\mapsto\,{\int}\alpha\mkern 2mu\pr_\varGamma^*\tau}&}}
	C^k(P). \qedhere
\end{equation*} \end{proof}

In the course of the previous proof we have tacitly made use of the equivalence between the following two properties, for any function \(c\) defined on the base \(M\) of a proper Lie groupoid \(\varGamma\tto M\):%
\begin{itemize}
\itemsep=0pt%
 \item The function \(c\circ s\) on \(\varGamma\) is properly supported with respect to \(t:\varGamma\to M\).
 \item The set\/ \(\supp c\cap\varGamma K\) is compact for every compact \(K\subset M\).
\end{itemize}
A function which enjoys these properties is commonly referred to as a ``cut-off'' function. The existence of cut-off functions on the base of any proper Lie groupoid, which implies the existence of normalized Haar systems, can easily be established by adapting the proof of \cite[Proposition~6.11]{Tu} from the continuous to the smooth case.

We shall say that a (non-emp\-ty) open subset \(U\) of the base \(M\) of a proper Lie groupoid \(\varGamma\tto M\) is \emph{adjusted} to a cut-off function \(c\) if \(\supp c\cap\varGamma U\subset U\). If \(U\) is adjusted to \(c\), then the restriction of \(c\) to \(U\) will be a cut-off function for \(\varOmega:=\varGamma\mathbin|U\tto U\). If \(\nu=(\tau,c)\) is a normalized Haar system on \(\varGamma\tto M\) and if \(U\subset M\) is any open subset which is adjusted to \(\nu\) in the sense that it is adjusted to \(c\), then \(\nu\mathbin|U:=(\tau\mathbin|\varOmega,c\mathbin|U)\) will be a normalized Haar system on \(\varOmega\tto U\); moreover, given \(f:P\to M\) and \(E\) as in the statement of Lemma~\ref{lem:12B.10.6}, for every continuous cross-sec\-tion \(\vartheta\in\Gamma^0(P\ftimes ft\varGamma\emphpunct*;\pr_P^*E)\),
\begin{equation}
\label{eqn:12B.18.3}
	\bigl(\integral*\vartheta d\nu\bigr)\mathbin|f^{-1}(U)=\integral*\left.\vartheta\mathbin|f^{-1}(U)\ftimes ft\varOmega\right.d(\nu\mathbin|U).
\end{equation}

\begin{lem}\label{lem:12B.20.1} Let\/ \(\varGamma\tto M\emphpunct, \nu\emphpunct, f:P\to M\) and\/ \(E\) be as in the statement of Lemma~\ref{lem:12B.10.6}. Let\/ \(U\) be any non-emp\-ty, relatively compact, open subset of\/ \(M\) which is adjusted to\/ \(\nu\), and let\/ \(V\) be any non-emp\-ty, relatively compact, open subset of\/ \(P\) such that\/ \(f(V)\subset U\). Then, for every finite order of differentiability\/ \(r\geq 0\), setting\/ \(\varOmega=s^{-1}(U)\cap t^{-1}(U)\), there exists a unique\/ \(C^r\)\mdash continuous linear map\/ \textcircled? that solves the commutativity problem below.
\begin{equation}
\label{eqn:12B.20.1}
\begin{split}
\xymatrix@C=3.33em@R=5ex{%
 \Gamma^r(P\ftimes ft\varGamma\emphpunct*;\pr_P^*E)
 \ar[d]^{\res^{P\ftimes ft\varGamma}_{\overline{V\ftimes ft\varOmega}}}
 \ar[r]^-{\vartheta\,\mapsto\,\integral\vartheta d\nu}_-{\eqref{eqn:12B.10.6a}}
 &	\Gamma^r(P;E)
	\ar[d]^{\res^P_{\overline V}}
\\ \Gamma^r(\overline{V\ftimes ft\varOmega}\emphpunct*;\pr_P^*E)
 \ar@{.>}[r]^-{\text{\textcircled?}}
 &	\Gamma^r(\overline V;E)
}\end{split}
\end{equation} \end{lem}

\begin{proof} We are going to make use the same notations as in the proof of Lemma~\ref{lem:12B.10.6} without further notice. A straightforward partition of unity argument shows that the restriction maps in \eqref{eqn:12B.20.1} are surjective. There will therefore be at most one solution to the problem represented by \eqref{eqn:12B.20.1}. To confirm that one such solution exists, one needs to verify that for every cross-sec\-tion \(\vartheta\in\Gamma^r(P\ftimes ft\varGamma\emphpunct*;\pr_P^*E)\) which vanishes on \(V\ftimes ft\varOmega\) the cross-sec\-tion \(\integral\vartheta d\nu\in\Gamma^r(P;E)\) also vanishes on \(V\). Now, for every \(y\in f^{-1}(U)\) we have \(c(sh)=0\) for \(h\in\varGamma_{f(y)}\smallsetminus s^{-1}(U)\) because \(U\) is adjusted to \(c\). So, whenever \(y\in V\),
\begin{equation*}
	\integral\vartheta(y,h)d\nu_{f(y)}(h)=\integral\vartheta(y,h)c(sh)d\mu_{f(y)}(h)=0.
\end{equation*}

Reduction to the case of trivial coefficients is clear when \(V\) is so small that its closure \(\overline V\) lies within the domain of definition of some local trivializing chart for the vector bundle \(E\). For general \(V\), we first choose a finite cover \(\{V_i\}\) of \(\overline V\) by relatively compact open sets so that \(E\) trivializes around each closure \(\overline{V_i}\), and then a partition of unity \(\{g_i\}\) over \(\overline V\) subordinated to this cover in the sense that \(\supp g_i\subset V_i\) and \(\sum_ig_i=1\) on \(\overline V\). Then
\begin{equation*}
\textstyle%
	\integral*\vartheta d\nu=\sum_ig_i\integral*\vartheta d\nu=\sum_i\integral*(g_i\circ\pr_P)\vartheta d\nu.
\end{equation*}
Now each correspondence \(\vartheta\mapsto\integral(g_i\circ\pr_P)\vartheta d\nu\) can be viewed as a composite linear map
\begin{multline*}
	\Gamma^r(\overline{V\ftimes ft\varOmega}\emphpunct*;\pr_P^*E)
 \mathrel{\xymatrix@1@C=4.33em{\ar[r]^{\vartheta\,\mapsto\,(g_i\circ\pr_P)\vartheta}&}}
	\Gamma^r(\overline{V\ftimes ft\varOmega}\emphpunct*;\pr_P^*E)
 \mathrel{\xymatrix@1@C=1.33em{\ar[r]^-\res &}}
\\	\Gamma^r(\overline{[V\cap V_i]\ftimes ft\varOmega}\emphpunct*;\pr_P^*E)
 \mathrel{\xymatrix@1@C=3.67em{\ar@{.>}[r]^-{\text{\textcircled? for $V\mathord\cap V_i$}}&}}
	\Gamma_{\overline V\cap\supp g_i}^r(\overline{V\cap V_i};E)
 \mathrel{\xymatrix@1@C=5.00em{\ar[r]^-{\text{extension by zero}}&}}
	\Gamma^r(\overline V;E),
\end{multline*}
the first, second, and last map being clearly \(C^r\)\mdash continuous. The problem is thus reduced to the case of functions.

Let \(g\) stand for the function \(c\circ s\circ\pr_\varGamma\) on \(\varGamma\), and let \(W\) stand for the relatively compact open subset \(V\ftimes ft\varOmega\) of \(P\ftimes ft\varGamma\). We contend that \(g\) is a properly supported function with respect to \(\pr_P\) and moreover that
\begin{equation*}
	\pr_P(W)\subset V\emphpunct{ }\text{and}\emphpunct{ }\pr_P^{-1}(V)\cap\supp g\subset W.
\tag{\dag}
\end{equation*}
To see this, notice that for every subset \(A\subset P\)
\begin{align*}
 \pr_P^{-1}(A)\cap\supp(c\circ s\circ\pr_\varGamma)
	&\subset\pr_P^{-1}(A)\cap\pr_\varGamma^{-1}\bigl(\supp(c\circ s)\bigr)
\\	&=A\ftimes ft\supp(c\circ s).
\tag{\ddag}
\end{align*}
Now, if \(A\) is compact then \(t^{-1}\bigl(f(A)\bigr)\cap\supp(c\circ s)\) is compact too, since \(c\circ s\) is properly supported with respect to \(t\) because \(c\) is a cut-off function. This shows that \(g\) has to be properly supported with respect to \(\pr_P\). If on the other hand in (\ddag) we take \(A=V\) then, since \(f(V)\subset U\) and since \(U\) is adjusted to \(c\),
\begin{equation*}
	\supp(c\circ s)\cap t^{-1}\bigl(f(V)\bigr)\subset s^{-1}(\supp c)\cap t^{-1}(U)\subset\varOmega,
\end{equation*}
which establishes (\dag).

Now, let \(\delta:=\pr_\varGamma^*\tau\) (regarded as a volume density along the fibers of \(\pr_P\)). Our Haar integration functional \textcircled? on functions coincides with the linear map
\begin{equation*}
\textstyle%
	C^r(\overline W)\to C^r(\overline V)\emphpunct, \alpha\mapsto{\int}\alpha g\delta
\end{equation*}
described as follows\emphpunct: (1)~extend \(\alpha g\) to some properly supported \(C^r\) function defined on some ``tube'' \(W_1=\pr_P^{-1}(V_1)\) with \(V_1\supset\overline V\) open [this is always possible in virtue of (\dag)]\emphpunct; (2)~integrate the extended function against \(\delta\mathbin|W_1\)\emphpunct; (3)~restrict the result of the integration from \(V_1\) to \(\overline V\). We leave it as an exercise for the reader to verify that the linear map thus obtained is \(C^r\)\mdash continuous. ({\it Hint\emphpunct:}\/ By using a suitable partition of unity, reduce the proof to a computation in local coordinates.) \end{proof}

{\footnotesize
\bibliographystyle{abbrv}
\bibliography{bib/2016a,bib/gtrentin}
}%
\end{document}